\documentclass[12pt]{amsart}
\usepackage[utf8]{inputenc}

\textwidth=16cm
\oddsidemargin=.3cm
\evensidemargin=.3cm 
\setlength{\textheight}{21 cm}      
\usepackage[english]{babel}
\usepackage{tabularx}
\usepackage{multirow}
\usepackage{amsmath, amssymb, amsthm, amscd,color,comment}
\usepackage{cancel}
\usepackage{cite}
\usepackage{alltt}
\usepackage[dvipsnames]{xcolor}
\usepackage{array}
\usepackage{caption} 
\usepackage{mathtools}
\usepackage{comment}

\usepackage{enumerate}
\usepackage{cancel}
\newtheorem{theorem}{Theorem}[section]

\newtheorem{remark}[theorem]{Remark}
\newtheorem{lemma}[theorem]{Lemma}
\newtheorem{corollary}[theorem]{Corollary}
\newtheorem{proposition}[theorem]{Proposition}

\newtheorem{conjecture}[theorem]{Conjecture}
\DeclarePairedDelimiter\ceil{\lceil}{\rceil}
\DeclarePairedDelimiter\floor{\lfloor}{\rfloor}

\def\N{\mathbb N}
\def\Z{\mathbb Z}
\def\cC{\mathcal C}

\def\cF{\mathcal F}
\def\cH{\mathcal H}

\def\cX{\mathcal X}
\def\cY{\mathcal Y}

\def\a{\alpha}

\def\fqs{\mathbb F_{q^2}}

\def\fq{\mathbb F_q}

\def\aut{{\rm Aut}}

\def\supp{{\rm Supp}}

\def\dim{{\rm dim}}

\def\a{\alpha}

\newcommand{\al}{\alpha}

\newcommand{\la}{\lambda}
\newcommand{\tx}{\tilde x}
\newcommand{\ty}{\tilde y}

\begin{document}

\title{Some families of non-isomorphic maximal function fields}

\thanks{{\bf Keywords}: Hermitian function field; Maximal function field; Isomorphism classes, Automorphism group, Weierstrass semigroup}

\thanks{{\bf Mathematics Subject Classification (2010)}: 11G, 14G}

\author{Peter Beelen, Maria Montanucci, Jonathan Tilling Niemann and Luciane Quoos}

\begin{abstract}
The problem of understanding whether two given function fields are isomorphic is well-known to be difficult, particularly when the aim is to prove that an isomorphism does not exist. 
In this paper we investigate a family of maximal function fields that arise as Galois subfields of the Hermitian function field.
We compute the automorphism group, the Weierstrass semigroup at some special rational places and the isomorphism classes of such function fields. 
In this way, we show that often these function fields provide in fact examples of maximal function fields with the same genus, the same automorphism group, but that are not isomorphic.
\end{abstract}
\maketitle

\section{Introduction}

A function field $\mathcal{F}$ defined over a finite field with square cardinality $\fqs$ is called maximal, if the Hasse–Weil bound is attained. More precisely, if $\mathcal{F}$ has genus $g$, the Hasse–Weil bound states that:
$$N(\mathcal{F}) \leq q^2+1+2gq,$$
where $N(\mathcal{F})$ denotes the number of places of degree one of $\mathcal{F}/\fqs$. A function field is called maximal over $\fqs$ if the above bound is attained with equality, that is, $N(\mathcal{F})=q^2+1+2gq$.

An important and well-studied example of a $\fqs$-maximal function field is the Hermitian function field $\mathcal{H}:=\fqs(x,y)$ with $$y^{q+1}=x^q+x.$$ 
The function field $\mathcal{H}$ has genus $g_1=q(q-1)/2$, which is in fact the largest possible genus for a maximal function field over $\fqs$, see \cite{I1982}. This function field has also an exceptionally large automorphism group, which is isomorphic to $\textrm{PGU}(3,q)$. 

In fact more is known, namely the Hermitian function field $\cH$ is the only maximal function field (up to isomorphism) with $g_1$, see \cite{RS1994}. 

Since a subfield of a maximal function field with the same field of constants is maximal by a theorem of Serre \cite{Serre}, computing fixed fields of $\mathcal{H}$ of a subgroup of $\textrm{PGU}(3, q)$ have given rise to many examples of maximal function fields.

 The second largest genus for a maximal function field over $\fqs$ is $g_2=\lfloor \frac{(q-1)^2}{4} \rfloor$ and examples of such function field can be obtained in fact as a subfield of $\mathcal{H}$, namely
$$y^{\frac{q+1}2}=x^q+x,  \quad \text{ for $q$ odd (see \cite{FGT1997})}, $$
and 
$$y^{q+1}=x^{q/2}+x^{q/4}+\dots +x^2+x, \quad \text{ for $q$ even (see \cite{AT1999})}. $$

In \cite{FGT1997} and \cite{AT1999} it is proven that the two function fields above are the only maximal function fields, up to isomorphism, of genus $g_2$. 

Inspired by this, it is natural to ask how many maximal function fields of a given genus one can find over $\fqs$. 
This question is already unanswered for $g_3=\lfloor (q^2-q)/6 \rfloor$, where in fact uniqueness is still  an open problem.

The underlying reason for this is that it is in general quite difficult to decide whether two given maximal function fields over the same finite field are isomorphic or not. Besides the function fields with large genus mentioned above there are in fact very few examples in the literature where the investigation of non-isomorphic maximal function fields is carried out completely and successfully.  

In \cite[Theorem 1.1]{GHKT2006}, the authors classify when ${\frac{q+1}2} > 3$ is a prime number and $ 1 \leq i \leq \frac{q-3}2$, the family of maximal function fields $\fqs(x,y)$ of genus $q-1$ defined by
$$y^{q+1}=x^{2i}(x^2+1).$$

This family is a collection of fixed fields of the Hermitian function field with respect of cyclic automorphism groups of order prime to $p$, the characteristic of $\fqs$. In the same paper \cite{GHKT2006} the authors also prove in fact that the family described above gives rise to roughly $(q+1)/12$ non-isomorphic maximal function fields.

Another example of this type was given in \cite{ABQ2009}, where the authors investigated isomorphism in the family of maximal function fields $\fqs(x,y)$ with $y^k=-x^b(x^d+1)$ where $kd= q^n+1$ and $b \in \N$ satisfying $\gcd(k, b)=\gcd(k, b+d)=1$ of genus $\frac{k-1}{2}$.

In this paper we present a large class of non-isomorphic maximal function fields over $\fqs$ of same genus $(q-1)/2$, $q$ odd. This function fields arise as a generalization of a family of maximal function fields that appeared in \cite{GMQ2021}, where the authors provide $6$ non-isomorphic maximal function fields of genus $12$ over $\mathbf{F}_{5^4}$. More precisely for $q$ odd, we consider the function field $\mathcal{F}_i(x,y)$ with
$$y^m=x^i(x^2+1),$$
where $i \in \mathbb{Z}$, $m=(q+1)/2$ and $\gcd(i,m)=\gcd(i+2,m)=1$.


As in \cite[Theorem 1.1]{GHKT2006} this family is a collection of subfields of the Hermitian function fields, see \cite[Example 6.4]{GSX2000} and hence it is $\fqs$-maximal.

In this paper, we compute the automorphism group of $\mathcal{F}_i$ over $\bar{\mathbb{F}}_{q^2}$, the Weierstrass semigroup at some places of degree one and the isomorphism classes in the family $\{\mathcal{F}_i\}_i$. 

In this way, we show that these function fields provide in fact examples of maximal function fields with the same genus, often the same automorphism group, but that are not isomorphic.

Finally in Theorem \ref{numbernoniso} we show that the number of isomorphism classes among the function fields $\mathcal{F}_i$ is
$$
        N(m) = 
        \begin{cases}
            \frac{\phi_2(m) + 1}{2} & \text{ for } m \not\equiv 0 \pmod 4,\\
            \frac{\phi_2(m) + 2}{2} & \text{ for } m \equiv 0 \pmod 4,
        \end{cases}
    $$
    where $\phi_2(m)$ is given by 
    $$
        \phi_2(m) = 2^{\max\{0,\alpha_0-1\}} p_1^{\alpha_1 - 1}(p_1-2) \cdots p_n^{\alpha_n -1}(p_n - 2),
    $$
for $m = 2^{\alpha_0} p_1^{\alpha_1} p_2^{\alpha_2} \cdots p_n^{\alpha_n}$,  $p_1, \dots, p_n$ pairwise distinct odd primes and with $\alpha_0 \in \mathbb{Z}_{\geq 0}$ and $\alpha_1, \dots, \alpha_n$ in $\mathbb{Z}_{>0}$.

The paper is organized as follows. In Section 2 some preliminary results on maximal function fields and their automorphism group are provided. In the same section we also describe the inspiration behind the family of function fields $\{\mathcal{F}_i\}_i$, and why they give intuitively rise to many non-isomorphic maximal function fields. In Section 3 the family $\{\mathcal{F}_i\}_i$ is defined and some first isomorphisms between the function fields $\mathcal{F}_i$ are computed, together with partial information about the Weierstrass semigroup at some special places of degree one. Section 4 provides the complete determination of the automorphism group of $\mathcal{F}_i$ over $\bar{\mathbb{F}}_{q^2}$ for $i \ne (m-2)/2$. Finally, in Section 5 the isomorphism classes among the function fields $\{\mathcal{F}_i\}_i$ and its number is computed.

\section{Preliminaries and Notations}
In this section, we deal with the preliminary notions and results that will be needed throughout the paper. In the first subsection, we collect some general properties of maximal function fields, Weierstrass semigroups and automorphism groups that we will use in the later sections. In the second subsection, we recall the definition of the function fields $\{\mathcal{F}_i\}_i$ and we
focus on some particular rational functions defined on it, computing their principal divisors. Some preliminary results about $\aut(\mathcal{F}_i)$ will also be provided there.

\subsection{Automorphism groups and Weierstrass semigroups on algebraic function fields}

Throughout this paper $p$ is a prime and $q=p^h$ where $h \geq 1$. We denote with $K=\bar{\mathbb{F}}_{q}$ the algebraic closure of the finite field $\mathbb{F}_q$, and for a function field $\mathcal{F}$ of genus $g$ defined over the finite field $\mathbb{F}_q$, we denote with $\aut(\mathcal{F})$ the automorphism group of the function field $\mathcal{F}$ over $K$. Together with the genus $g$, the group $\aut(\mathcal{F})$ is a well-known invariant  of $\mathcal{F}$ under isomorphism. In fact if $\mathcal{F}$ and $\mathcal{G}$ are two isomorphic function field over $K$, and $\varphi: \mathcal{F} \mapsto \mathcal{G}$ is an isomorphism, then $\aut(\mathcal{F})=\varphi^{-1} \circ \aut(\mathcal{G}) \circ \phi$. This observation implies also that the set of fixed places of $\aut(\mathcal{F})$ is mapped to the set of fixed places of $\aut(\mathcal{G})$.

If $\mathcal{F}$ is maximal over $\fqs$ then $\aut(\mathcal{F})$ is extremely structured. A first important fact is that actually  $\aut(\mathcal{F})$ is defined over $\fqs$, see \cite[Theorem 3.10]{GSY2015}.
This implies not only that $\aut(\mathcal{F})$ acts on the set of places of degree one of $\mathcal{F}$ (that is, the $\fqs$-rational places), giving important restriction on the shape of $\aut(\mathcal{F})$, but also that $\aut(\mathcal{F})$ can be used to decide whether another given function field $\mathcal{G}$ can be isomorphic to $\mathcal{F}$ or not. 

Another important tool one can use in this direction is the Weierstrass semigroup at places of $\mathcal{F}$. If $P$ denotes a place of $\mathcal{F}$, then the Weierstrass semigroup $H(P)$ at $P$ is defined as the collection of all integers $n \in \mathbb{Z}$ for which there exists a function $\alpha \in \mathcal{F}$ such that $(\alpha)_\infty=nP$.

Clearly $H(P) \subseteq \mathbb{Z}_{\geq 0}$ is a numerical semigroup, and the elements of $G(P):=\mathbb{Z}_{\geq 0} \setminus H(P)$ are called gaps. Since it is well-known that when $\varphi: \mathcal{F} \rightarrow \mathcal{G}$ is an isomorphism then $H(P)=H(\varphi(P))$ for all $P$ place in $\mathcal{F}$, Weierstrass semigroups also represent a useful tool when deciding whether two function fields can be isomorphic or not. 

To compute $G(P)$, and hence $H(P)$, the following result becomes extremely useful and in fact, we will use it in the later sections.

\begin{proposition}\cite[Corollary 14.2.5]{S2006}\label{propgaps}
Let $\cC$ be an algebraic curve of genus $g$ defined
over $K$. Let $P$ be a point of $\cC$ and $\omega $ a regular differential on $\cC$. Then $v_P(\omega) + 1$ is a gap at $P$.
\end{proposition}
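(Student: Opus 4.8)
The plan is to prove the contrapositive: assuming $n := v_P(\omega)+1$ is \emph{not} a gap, I would derive a contradiction from the residue theorem. Recall that $n$ being a non-gap means there is a function $f$ in the function field $K(\cC)$ whose only pole is at $P$ and has order exactly $n$, i.e. $(f)_\infty = nP$. Writing $k = v_P(\omega) \ge 0$ (the inequality because $\omega$ is regular, so $(\omega) \ge 0$), the idea is to multiply this hypothetical function by the differential $\omega$ and inspect the resulting differential $f\omega$ point by point.

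First I would compute the divisor of $f\omega$. At $P$ we have $v_P(f\omega) = v_P(f) + v_P(\omega) = -n + k = -1$, since $n = k+1$. At every other point $Q \neq P$, the function $f$ is regular and $\omega$ is regular, so $v_Q(f\omega) = v_Q(f) + v_Q(\omega) \ge 0$. Hence $f\omega$ is a differential whose only pole is a simple pole, located at $P$.

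Next I would invoke two standard facts. The residue theorem on a complete nonsingular curve over $K$ states that $\sum_Q \mathrm{Res}_Q(f\omega) = 0$. On the other hand, a differential with a pole of order exactly one at $P$ has nonzero residue there: in a local uniformizer $t$ one writes $f\omega = (a_{-1}t^{-1} + a_0 + \cdots)\,dt$ with $a_{-1} \neq 0$, and $\mathrm{Res}_P(f\omega) = a_{-1} \neq 0$ (this holds in any characteristic, since only the formal $t^{-1}$-coefficient is extracted). As $f\omega$ has no other poles, all remaining residues vanish, so $\sum_Q \mathrm{Res}_Q(f\omega) = a_{-1} \neq 0$, contradicting the residue theorem. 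Therefore $n = v_P(\omega)+1$ must be a gap.

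Alternatively, one could argue purely via Riemann--Roch: writing $\Omega(A)$ for the space of differentials $\eta$ with $(\eta) \ge A$, one checks that $n$ is a gap at $P$ exactly when $\dim\Omega((n-1)P) - \dim\Omega(nP) = 1$, and the regular differential $\omega$ with $v_P(\omega) = n-1$ lies in $\Omega((n-1)P)\setminus\Omega(nP)$, forcing precisely this drop. In either route the main point to get right is the valuation book-keeping that turns the regularity of $\omega$ together with the single pole of $f$ into a differential with a \emph{unique simple} pole; the remaining input, the residue theorem (or the duality identifying $\dim\Omega(A)$ with the index of speciality), I would cite rather than reprove.
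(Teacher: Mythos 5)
Your proposal is correct, but note that the paper itself offers no proof of this statement: it is quoted as a known preliminary result, cited to \cite[Corollary 14.2.5]{S2006}, so there is no ``paper's proof'' to compare against line by line. Both of your routes are standard and sound. The residue-theorem argument is complete as written: from $(f)_\infty = nP$ and $v_P(\omega)=n-1$ you correctly get that $f\omega$ has a unique, simple pole, and the two facts you invoke --- well-definedness of the residue and the vanishing of the global sum of residues --- do hold in arbitrary characteristic, so the nonzero coefficient $a_{-1}$ gives the desired contradiction. You were also right to flag the characteristic issue explicitly, since that is the one place where a careless complex-analytic habit could go wrong; the key saving point, which you state, is that a pole of order exactly one forces $\mathrm{Res}_P(f\omega)=a_{-1}\neq 0$, whereas higher-order poles could have vanishing residue. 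Your alternative Riemann--Roch argument is essentially the textbook proof (gaps correspond to jumps in the index of speciality, and $\omega\in\Omega((n-1)P)\setminus\Omega(nP)$ certifies the jump), and is most likely close to what the cited reference does. The only small thing to make explicit in either write-up is that the curve is complete and nonsingular (equivalently, that one works with the full function field and all its places), since both the residue theorem and the duality statement require it; in this paper that is the standing convention.
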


As we will see later, both the automorphism group and its fixed places, as well as Weierstrass semigroups will be used to construct our large family of non-isomorphic maximal function fields.

We are now in a position to define the main protagonist of this paper, namely the family of maximal function fields $\{\mathcal{F}_i\}_i$. The aim of the next subsection is to show first where the inspiration behind its construction.

\subsection{The inspiration behind the construction of the function field $\mathcal{F}_i$}

Using reciprocal polynomials, \cite{GMQ2021} provides a family of algebraic function fields $\mathbb{F}_{q^2}(u,v)$ with
$$v^{\frac{q+1}{2}}=\frac{(u+b)(bu+1)}{u^s},$$ 
where $b\in \fq^*$ and $s\geq 0$ is a integer. 

In the particular case $q=5^2$ and $b=2$, this family provides six non-isomorphic maximal function fields $\mathbb{F}_{5^4}$ of genus $12$, more specifically, the function fields $\mathcal{G}_i(u,v)$ with
$$
 v^{13}=\frac{(u+2)(2u+1)}{u^
i}, \text{ for } i=1,3,4, 5,6 \text{ and } 7.
$$
Since $2$ is a $13$-power in $\mathbb{F}_{5^4}^*$, these function fields are isomorphic to the family
\begin{equation}\label{curvefamily1}
y^{13}=x^{i}(x^2+1) \text{ for } i=6,7,8,9, 10\text{ and } 12.
\end{equation}
This family of function fields is maximal since they appear as special cases of \cite[Example 6.4 (Case 1)]{GSX2000} and are subfields of the Hermitian function field $z^{5^2+1}=w^{5^2+1}+1$. This family represents the inspiration for the work of this paper, in fact for $i,j=6,7,8,9,10,12$, with $i \ne j$ the function fields $\mathcal{G}_i$ and $\mathcal{G}_j$ are not isomorphic, even though they have the same genus.
In reality, the function fields $\mathcal{G}_i$ and $\mathcal{G}_j$ have also the same automorphism group. 
In fact $\aut(\mathcal{G}_i)$ contains the cyclic group of order $26$ given by 
$$G_i:=\{\sigma: (x, y)\mapsto (ax, by)\mid a^2=b^{13}=1 \}\subseteq \aut(\mathcal{G}_i) \quad \text{for $i=6, 8, 10, 12$}$$ 
and 
$$G_i:=\{\sigma: (x, y)\mapsto (ax, by)\mid a^2=1, b^{13}=a \}\subseteq \aut(\mathcal{G}_i)\quad \text{for $i=7, 9$}.$$ 
This means that $\mathcal{G}_i$ is a $(g+1)$-function field (equivalently it gives rise to a $(g+1)-curve)$, that is, $g+1$ is a prime number and is a divisor of $|\aut(\mathcal{G}_i)|$. From \cite[Theorem 17]{AS2021}, we have that 
$|\aut(\mathcal{G}_i)|=k(g+1)$, where  $k=2$ or $k=4$. From \cite[Proposition 7]{AS2021}, we conclude that $\aut (\mathcal{G}_i)=G_i$ for all $i=6,7,8,9,10,12$.

It is in fact Weierstrass semigroups that allow us to see that these function fields cannot be isomorphic for different choices of $i$. Denoting with $P_0$ (resp. $P_\infty$) the only zero (resp. only pole) of $x$ in $\mathcal{G}_i$ and with $P_\alpha$ and $P_{-\alpha}$ the zeroes of $x^2+1$ where $\alpha^2+1=0$, one has that the automorphism group $G_i$ described above fixes $P_0$ and $P_\infty$ and has no other fixed places globally. Now if we have an isomorphism $\varphi_{i,j}$ between $\mathcal{G}_i$ and $\mathcal{G}_j$, then the set of fixed points of the automorphism group $G_i$ needs to be mapped to the corresponding one of $G_j$. 
The following tables collects all the  gap sequences obtained using \cite[Propositions 4.1, 4.2]{CMQ2023} at the points $P_0$ and $P_\infty$ for the values of $i$ we are interested in.

\vspace{0.2cm}
\begin{tabular}{ |c|c|c| } 
 \hline
& $i=6$ & $i=7$ \\ 
\hline
$G(P_\infty)$ & $\{ 1, 2, 3, 4, 5, 6, 7, 9, 10, 12, 15, 18\}$ & $\{1, 2, 3, 4, 6, 7, 8, 11, 12, 16, 17, 21\}$ \\ 
\hline
 $G(P_0)$ & $\{1, 2, 3, 4, 5, 6, 9, 10, 11, 12, 18, 19\}$ & $\{1, 2, 3, 4, 5, 7, 8, 9, 10, 14, 15, 20 \}$  \\ 
 \hline
\end{tabular}

\vspace{0.2cm}

\begin{tabular}{ |c|c|c| } 
 \hline
& $i=8$ & $i=9$ \\ 
\hline
$G(P_\infty)$ & $\{ 1, 2, 3, 4, 5, 6, 8, 9, 12, 15, 16, 19 \}$ & $\{ 1, 2, 3, 4, 5, 6, 8, 10, 12, 15, 17, 19  \}$ \\ 
\hline
 $G(P_0)$ & $\{1, 2, 3, 4, 6, 7, 8, 9, 11, 14, 16, 21\}$ & $\{1, 2, 3, 5, 6, 7, 9, 10, 11, 14, 18, 22 \}$ \\ 
 \hline
\end{tabular}

\vspace{0.2cm}

\begin{tabular}{ |c|c|c| } 
 \hline
&  $i=10$ & $i=12$\\ 
\hline
$G(P_\infty)$ &  $\{1, 2, 3, 4, 5, 6, 7, 8, 14, 15, 16, 17 \}$ & $\{  1, 2, 3, 4, 5, 6, 7, 8, 9, 10, 11, 12 \}$\\ 
\hline
 $G(P_0)$ & $\{1, 2, 4, 5, 7, 8, 10, 11, 14, 17, 20, 23 \}$ & $\{ 1, 2, 3, 4, 5, 6, 7, 8, 9, 10, 11, 12\}$ \\ 
 \hline
\end{tabular}
\vspace{0.2cm}

Since according to the three table above there is not $2$ equal gap sequences for distinct values of $i$, we deduce that the curves $\mathcal{C}_i$ with $i=6,7,8,9,10,12$ are pairwise not isomorphic.

The aim of this paper is to consider a generalization of this example, and inspired by the considerations above to show that they give rise to a large class of non-isomorphic maximal function fields with the same genus and often the same automorphism group. This family of maximal function fields is described in the next section, where some initial properties will be proven as well.

\section{The function fields $\cF_i$}\label{sec_initial_results} 

Let $q=p^h$, where $p$ is an odd prime and let $m=\frac{q+1}{2}$. The family of maximal function fields $\cF_i:=\fqs(x,y)$ is defined by
\begin{equation}\label{curveC}
y^m=x^i(x^2+1),\, i \in \mathbb{Z}, \ \gcd(i,m)=\gcd(i+2,m)=1
\end{equation}
over $\fqs$. The condition $\gcd(i,m)=\gcd(i+2,m)=1$ implies that all these function fields have genus $m-1$. In fact, these function fields already appeared in \cite[Example 6.4 (Case 1)]{GSX2000} and are subfields of the Hermitian function field $\fqs(u,v)$ over $\fqs$ defined by the equation $v^{q+1}=u^{q+1}+1$. 

For $\a \in \fqs$ such that $\a^2+1=0$ we denote by $P_{\a}, P_{-\a}$ the only places of $F_i$ associated to $x=\a, x=-\a$ respectively. We denote by $P_0$ (resp. $P_\infty$) the unique zero (resp. pole) in $F_i$ of the function $x$. By considering the Kummer extension $\fqs(x,y)/\fqs(x)$, one directly derives the following divisors, which will be useful later on
\begin{align}\label{divisors}
\begin{split}
&(x)=m(P_0-P_\infty),\\
&(y)=iP_0+P_\alpha + P_{-\alpha} -(i+2)P_\infty,\\
&(dx)=-(m+1)P_\infty + (m-1)(P_0+P_\alpha+P_{-\alpha}).
\end{split}
\end{align}


Our aim is to study the isomorphism classes of the function fields $\cF_i$, with $i \in \mathbb{Z}$ satisfying $\gcd(i,m)=\gcd(i+2,m)=1$. As a matter of fact, we will give a complete description of the isomorphism classes of the function fields $\overline{\mathbb{F}}_{q^2}\cF_i$, where $\overline{\mathbb{F}}_{q^2}\cF_i$ denotes the function field obtained from $\cF_i$ by extending the constant field to $\overline{\mathbb{F}}_{q^2}$, the algebraic closure of $\fqs.$

We start with a lemma that will allow us to restrict the range of the parameter $i$ considerably.

\begin{lemma}\label{classes}
For $i \in \mathbb{Z}$, let $\cF_i$ be the function field as given in Equation \eqref{curveC}. Then $\cF_i$ is isomorphic to $\cF_{i \mod m}$, where $i \mod m$ denotes the remainder of $i$ modulo $m$. 
\end{lemma}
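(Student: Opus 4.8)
The plan is to exploit the fact that raising a power of $x$ to the $m$th power produces an $m$th power which can be absorbed into the exponent of $x$ on the right-hand side of the defining equation. Concretely, I would write $i = i_0 + km$ with $i_0 := i \bmod m \in \{0,1,\dots,m-1\}$ and $k \in \mathbb{Z}$. First I would observe that the defining hypotheses are preserved: since $i \equiv i_0 \Mod{m}$ and $i+2 \equiv i_0 + 2 \Mod{m}$, the conditions $\gcd(i_0,m) = \gcd(i_0+2,m) = 1$ hold, so $\cF_{i_0}$ is a legitimate member of the family and has genus $m-1$ as well.

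Next I would construct the isomorphism explicitly. Writing $\cF_i = \fqs(x,y)$ with $y^m = x^i(x^2+1)$ and $\cF_{i_0} = \fqs(x_0,y_0)$ with $y_0^m = x_0^{i_0}(x_0^2+1)$, I would define the map $\varphi \colon \cF_i \to \cF_{i_0}$ determined by $\varphi(x) = x_0$ and $\varphi(y) = y_0\, x_0^{k}$. To see that $\varphi$ is a well-defined field homomorphism, I would first extend $x \mapsto x_0$ to an embedding of the rational subfield $\fqs(x) \hookrightarrow \cF_{i_0}$ (valid because $x_0$ is transcendental over $\fqs$), and then verify that the proposed image of $y$ is a root of the image of the minimal polynomial $Y^m - x^i(x^2+1)$ of $y$ over $\fqs(x)$. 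This is exactly the content of the computation
$$
\varphi(y)^m = (y_0\, x_0^{k})^m = y_0^m\, x_0^{km} = x_0^{i_0}(x_0^2+1)\, x_0^{km} = x_0^{\,i_0+km}(x_0^2+1) = x_0^{\,i}(x_0^2+1),
$$
which equals $\varphi\bigl(x^i(x^2+1)\bigr)$; hence the assignment extends to $y$ and $\varphi$ is a genuine $\fqs$-homomorphism.

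Finally I would argue that $\varphi$ is an isomorphism. Any homomorphism of fields is injective, so it suffices to check surjectivity; since $x$ is a nonzero (indeed transcendental) function, $x^{-k}$ is a legitimate element of $\cF_i$ for any sign of $k$, and one computes $\varphi(x) = x_0$ together with $\varphi(y\,x^{-k}) = y_0\, x_0^{k}\, x_0^{-k} = y_0$. Thus both generators $x_0$ and $y_0$ of $\cF_{i_0}$ lie in the image, so $\varphi$ is an $\fqs$-isomorphism $\cF_i \cong \cF_{i_0} = \cF_{i \bmod m}$.

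I do not expect a genuine obstacle in this argument; the only point requiring care is the well-definedness of $\varphi$, that is, confirming that multiplying $y$ by the power $x^{k}$ leaves the generated field unchanged while correctly shifting the exponent of $x$ by $km$. The underlying mechanism is simply that $x^{km} = (x^{k})^m$ is an $m$th power in the field, which is precisely what allows a multiple of $m$ to be transferred across the Kummer relation.
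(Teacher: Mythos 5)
Your proposal is correct and is essentially the paper's own argument: the paper likewise writes $i = am + r$ and twists $y$ by the power $x^a$ (the map $(x,y)\mapsto(x,y/x^a)$, with explicit inverse $(x,\tilde y)\mapsto(x,\tilde y x^a)$), absorbing $x^{am}$ across the Kummer relation exactly as you do. Your extra care about well-definedness via the minimal polynomial $Y^m - x^i(x^2+1)$ and the surjectivity check simply makes explicit what the paper leaves implicit.
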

\begin{proof}
For $i \in \mathbb{Z}$, write $i=am+r$ where $r \equiv i \pmod m$ and $a \in \Z$. 
Then 
$$
\varphi_{i,r} : \begin{cases} \cF_i \longrightarrow \cF_r, \\ (x,y) \mapsto (x,y/x^a)=:(x,\tilde{y}). \end{cases}
$$ 
is a field homomorphism from $\cF_i$ to $\cF_r$ over $\mathbb{F}_{q^2}$. Indeed, one directly verifies that
$$\tilde{y}^m=\frac{y^m}{x^{am}}=\frac{x^i(x^2+1)}{x^{am}}=x^r(x^2+1).$$
Moreover, the map $\varphi_{i,r}$ is an isomorphism, since its inverse is the map $(x,\tilde{y}) \mapsto (x,\tilde{y} x^a)$.
\end{proof}

Lemma \ref{classes} allows us, without loss of generality, to assume that $0 \le i \le m-1$ when studying isomorphism classes. As a matter of fact, we may assume that $1 \le i \le m-1$, since for $i=0$ the condition $\gcd(i,m)=1$ is not satisfied. It turns out that a further restriction is possible as demonstrated in the following lemma. 

\begin{lemma}\label{isom2}
Let $1 \leq i \leq m-3$. Then $\cF_i$ is isomorphic to $\cF_{m-2-i}$.
\end{lemma}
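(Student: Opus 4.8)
The plan is to exhibit an explicit isomorphism coming from the reciprocal substitution $x \mapsto 1/x$, with $y$ rescaled by a power of $x$ chosen so that the defining equation is sent to the right one. Concretely, I would consider the map
$$\Phi : (x,y) \longmapsto \left(\frac{1}{x},\, \frac{y}{x}\right),$$
and claim that it induces an isomorphism $\cF_i \to \cF_{m-2-i}$ fixing $\fqs$ pointwise. The correction factor $x^{-1}$ on $y$ is the only one that works: after inverting $x$, one needs exactly one power of the new variable to restore the shape $\tilde x^{\,j}(\tilde x^2+1)$.

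The first step is to check that $\Phi$ respects the defining relations. Setting $\tx = 1/x$ and $\ty = y/x$, a direct computation using \eqref{curveC} gives
$$\ty^m = \frac{y^m}{x^m} = \frac{x^i(x^2+1)}{x^m} = \tx^{\,m-i}\,(\tx^{-2}+1) = \tx^{\,m-2-i}(\tx^2+1),$$
which is precisely the defining equation of $\cF_{m-2-i}$ in the variables $\tx,\ty$. Hence sending $\tx \mapsto 1/x$ and $\ty \mapsto y/x$ determines a well-defined field homomorphism $\cF_{m-2-i} \to \cF_i$ over $\fqs$.

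Next I would observe that $\Phi$ is an involution: applying the same formula twice returns $(x,y)$, since $1/(1/x)=x$ and $(y/x)\big/(1/x)=y$. Thus $\Phi$ is its own inverse and is in particular an isomorphism, proving $\cF_i \cong \cF_{m-2-i}$. To confirm that $\cF_{m-2-i}$ is a genuine member of the family, I would verify that the coprimality hypotheses carry over: $\gcd(m-2-i,m)=\gcd(i+2,m)=1$ and $\gcd((m-2-i)+2,m)=\gcd(m-i,m)=\gcd(i,m)=1$, while the restriction $1 \le i \le m-3$ guarantees $1 \le m-2-i \le m-3$, so the index stays in the intended range.

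The only real content of the argument is guessing the correct transformation; geometrically the intuition is that $x\mapsto 1/x$ interchanges $P_0$ and $P_\infty$ (and swaps $P_\alpha,P_{-\alpha}$, since $1/\alpha=-\alpha$ when $\alpha^2+1=0$), and the divisor description in \eqref{divisors} makes it plausible that a single power of $x$ in the $y$-coordinate suffices. Once $\Phi$ is written down there is no genuine obstacle: both the verification of the defining equation and the involution property are routine, and the coprimality check is immediate.
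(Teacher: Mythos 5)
Your proposal is correct and is essentially the same as the paper's proof: the same substitution $(x,y)\mapsto(1/x,\,y/x)$, the same verification that $\tilde y^{\,m}=\tilde x^{\,m-2-i}(\tilde x^2+1)$, and the same observation that the map is its own inverse (the paper phrases this as the inverse being $\varphi_{i,m-2-i}$). Your additional check that the coprimality conditions and the range $1\le m-2-i\le m-3$ are preserved is a harmless extra that the paper leaves implicit.
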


\begin{proof}
It is enough to consider the field homomorphism
$$\varphi_{m-2-i,i} : \begin{cases} \cF_{m-2-i} \longrightarrow \cF_{i}, \\ (x,y) \mapsto (1/x,y/x)=:(\tilde{x},\tilde{y}). \end{cases}$$
This map is well-defined as
$$\tilde{y}^m=\frac{y^m}{x^m}=\frac{x^{m-2-i}(x^2+1)}{x^m}=\frac{1}{x^i}+\frac{1}{x^{i+2}}=\frac{1}{x^i}\bigg( 1+\frac{1}{x^2}\bigg)=\tilde{x}^i(\tilde{x}^2+1).$$
Moreover, the map is actually a field isomorphism, as its inverse is simply given by $\varphi_{i,m-2-i}$.
\end{proof}

Since for $i=m-2$, the condition $\gcd(i+2,m)=1$ is not fulfilled, Lemmas \ref{classes} and \ref{isom2} together imply that we are left to study the isomorphism classes of the function fields $\cF_i$ for $i=1, \dots, \floor{\frac{m-2}{2}}$ and $i=m-1$, always assuming that $\gcd(i,m)=\gcd(i+2,m)=1$. To study these function fields further, we will first obtain information on the Weierstrass semigroups of various places of these function fields. To completely determine all isomorphism classes, we will also need information on their automorphism groups, but this is the topic of a later section. 

\subsection{The semigroups of the places $P_0, P_{\pm \alpha}$ and $P_\infty$}

Our first observation is that for a given $i \in \Z$ the places $P_{\alpha}$ and $P_{-\alpha}$ have the same semigroup, though it may depend on $i$. Indeed, this follows directly from the description of the gapsequence of these places from \cite[Propositions 4.1, 4.2]{CMQ2023}, but it is also not hard to see that $\cF_i$ has an automorphism interchanging $P_{\alpha}$ and $P_{-\alpha}$, see Section \ref{sec:aut} for more details. For this reason, we will sometimes with slight abuse of notation talk about the semigroup of $P_{\pm \alpha}$, denoted by $H(P_{\pm \alpha})$. Also the gapsequences of the places $P_0$ and $P_\infty$ can be described using  \cite[Propositions 4.1, 4.2]{CMQ2023}, but it will be more convenient for us to describe a set of generators of these semigroups instead. We do this in the following theorem. For $r \in \mathbb{R}$, the expression $\lceil r \rceil$, resp. $\lfloor r \rfloor$, denotes the ceiling, resp. floor, of $r$.

\begin{theorem}\label{weierstrass_generators_P0_Pinf_thm}
The Weierstrass semigroups at the places $P_0$ and $P_\infty$ of the function field $\cF_i$ can be generated as follows:
$$H(P_0)=\left\langle m,\left\lceil \frac{\ell(i+2)}{m}\right\rceil m-\ell i \mid 1 \le \ell \le m-1 \right\rangle$$
and 
$$H(P_\infty)=\left\langle m,-\left\lfloor \frac{\ell i}{m}\right\rfloor m+\ell (i+2) \mid 1 \le \ell \le m-1 \right\rangle.$$
\end{theorem}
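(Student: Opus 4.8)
The plan is to work with the cyclic Kummer extension $\cF_i/\fqs(x)$ of degree $m$ and to read off both semigroups from the monomials $x^j y^\ell$. Recall from \eqref{divisors} that $v_{P_0}(x)=m$, $v_{P_0}(y)=i$, $v_{P_\infty}(x)=-m$, $v_{P_\infty}(y)=-(i+2)$, while $v_{P_{\pm\alpha}}(x)=0$ and $v_{P_{\pm\alpha}}(y)=1$, and that $P_0,P_{\pm\alpha},P_\infty$ are the only ramified places of $\cF_i/\fqs(x)$. Throughout I write $M_\ell=\lceil \ell(i+2)/m\rceil m-\ell i$ for the proposed generators at $P_0$ and $n_\ell=-\lfloor \ell i/m\rfloor m+\ell(i+2)$ for those at $P_\infty$.

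First I would establish the inclusion ``$\supseteq$'' at $P_0$. For $0\le \ell\le m-1$ consider $g_\ell=x^{-\lceil \ell(i+2)/m\rceil}y^\ell$. A direct valuation count using \eqref{divisors} gives $v_{P_{\pm\alpha}}(g_\ell)=\ell\ge 0$, $v_{P_\infty}(g_\ell)=\lceil \ell(i+2)/m\rceil m-\ell(i+2)\ge 0$ and $v_{P_0}(g_\ell)=-M_\ell$, and $g_\ell$ is regular at every other place since $x$ and $y$ have no further zeros or poles; hence $(g_\ell)_\infty=M_\ell P_0$, so $M_\ell\in H(P_0)$. Multiplying by powers of $x^{-1}$ shows $M_\ell+m\Z_{\ge0}\subseteq H(P_0)$, whence $\langle m,M_1,\dots,M_{m-1}\rangle\subseteq H(P_0)$. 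The functions $x^{-\lfloor \ell i/m\rfloor}y^\ell$ play the analogous role at $P_\infty$ and give the corresponding inclusion there.

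The heart of the proof is the reverse inclusion, and the main tool is the Galois action. Let $\zeta\in\fqs$ be a primitive $m$-th root of unity (it exists as $m\mid q+1\mid q^2-1$) and let $\sigma\colon(x,y)\mapsto(x,\zeta y)$ generate $\mathrm{Gal}(\cF_i/\fqs(x))$; note $\sigma$ fixes each of $P_0,P_{\pm\alpha},P_\infty$. Given $n\in H(P_0)$, pick $f$ with $(f)_\infty=nP_0$ and write $f=\sum_{\ell=0}^{m-1}c_\ell(x)y^\ell$ with $c_\ell\in\fqs(x)$. Since $p\nmid m$, averaging yields
$$c_\ell(x)y^\ell=\tfrac1m\sum_{t=0}^{m-1}\zeta^{-\ell t}\sigma^t(f),$$
and as each $\sigma^t(f)$ has poles only at $\sigma^t(P_0)=P_0$, so does each component $g:=c_\ell(x)y^\ell$. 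Analyzing a single such $g$: regularity at the places over $x=\beta$ with $\beta\notin\{0,\pm\alpha,\infty\}$ forces $c_\ell$ to have no finite poles other than at $x=0$, while $v_{P_{\pm\alpha}}(g)=m\,v_{\pm\alpha}(c_\ell)+\ell\ge0$ together with $0\le\ell<m$ forces $c_\ell$ to be regular at $x=\pm\alpha$ as well. Thus $c_\ell$ is a Laurent polynomial in $x$; since the monomials $x^j y^\ell$ have pairwise distinct valuations $-jm-\ell(i+2)$ at $P_\infty$, regularity of $g$ at $P_\infty$ forces every exponent $j$ occurring to satisfy $j\le -\lceil \ell(i+2)/m\rceil$. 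Hence $-v_{P_0}(g)=\max_j(-jm-\ell i)\in M_\ell+m\Z_{\ge0}$, and because the components have distinct valuations modulo $m$ (this is exactly where $\gcd(i,m)=1$ enters), $n=-v_{P_0}(f)=\max_\ell\bigl(-v_{P_0}(c_\ell y^\ell)\bigr)$ lies in $\bigcup_{\ell}(M_\ell+m\Z_{\ge0})$.

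Combining the inclusions closes the argument: I have shown $\langle m,M_1,\dots,M_{m-1}\rangle\subseteq H(P_0)$ and $H(P_0)\subseteq\bigcup_{\ell}(M_\ell+m\Z_{\ge0})$, and trivially $\bigcup_{\ell}(M_\ell+m\Z_{\ge0})\subseteq\langle m,M_1,\dots,M_{m-1}\rangle$, so all three sets coincide. The case of $P_\infty$ is entirely parallel with the roles of $P_0$ and $P_\infty$ interchanged, using $\gcd(i+2,m)=1$ to separate residues modulo $m$ and the bound $j\ge-\lfloor \ell i/m\rfloor$ from regularity at $P_0$. The step I expect to be most delicate is the passage from ``$f$ has poles only at $P_0$'' to ``each component $c_\ell y^\ell$ has poles only at $P_0$'': this is precisely where the Galois averaging is indispensable, since a naive ultrametric valuation argument breaks down at the unramified places, where all components share the same valuation and cancellation cannot be ruled out by hand.
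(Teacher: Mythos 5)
Your proof is correct, but it takes a genuinely different route from the paper's. The paper proves the hard inclusion by a dimension count: it defines the monomial spaces $L_n=\langle x^ky^\ell \mid 0\le\ell\le m-1,\ km+\ell(i+2)\le 0,\ km+\ell i\ge -n\rangle\subseteq L(nP_0)$, computes $\dim L_n$ for large multiples of $m$ by counting lattice points in a polygon via Pick's theorem, matches this against Riemann--Roch, and then squeezes $L_n=L(nP_0)$ for all $n$ from the fact that consecutive dimensions jump by at most one (this is where $\gcd(i,m)=1$ enters for them, just as it does for you). You instead avoid Riemann--Roch and Pick's theorem entirely: you use the $\mu_m$-Galois action of the Kummer extension $\cF_i/\fqs(x)$ to split an arbitrary $f$ with $(f)_\infty=nP_0$ into isotypic components $c_\ell(x)y^\ell$, the averaging identity $c_\ell(x) y^\ell=\tfrac1m\sum_t\zeta^{-\ell t}\sigma^t(f)$ guarantees each component inherits the pole condition (you correctly flag this as the step where naive valuation arguments fail, since cancellation at unramified places cannot otherwise be excluded), and then elementary valuation inequalities at the four ramified places pin down each component. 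What each approach buys: yours is more self-contained and directly establishes $H(P_0)=\bigcup_\ell\bigl(M_\ell+m\Z_{\ge0}\bigr)$, i.e.\ the Ap\'ery-set structure that the paper only extracts afterwards in Remark \ref{rem:Apery}; the paper's argument, on the other hand, produces explicit monomial bases of every Riemann--Roch space $L(nP_0)$ as a byproduct, which is strictly more information and is reused later (and connects to the alternative bases for Kummer extensions cited there). Both proofs localize the arithmetic hypotheses the same way: $\gcd(i,m)=1$ separates residues at $P_0$, $\gcd(i+2,m)=1$ at $P_\infty$.
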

\begin{proof}
We prove the statement for $H(P_0)$. First of all note that for $k,\ell \in \mathbb{Z}$ and $0 \le \ell \le m-1$, the functions $x^k y^\ell$ form a linearly independent set. From Equation \eqref{divisors}, we have
$$(x^ky^\ell)=(km+\ell i)P_0+\ell(P_\alpha+P_{-\alpha})-(km+\ell(i+2))P_\infty.$$
In particular, such a function $x^ky^\ell$ has no poles, except possibly at $P_0$, if and only if $km+\ell(i+2) \le 0$. 
For $n \in \mathbb{Z}$, define the linear space 
$$L_n= \langle x^ky^\ell \mid 0 \le \ell \le m-1, km+\ell(i+2) \le 0, km+\ell i \ge -n\rangle \subseteq L(nP_0),$$
where $L(nP_0)$ is the Riemann-Roch space associated to $nP_0$. If $L_{n+1} \neq L_n$, then there exists a function of the form $f=x^ky^\ell$ such that $v_{P_0}(f)=-(n+1)$. In particular, $L_{n+1} \neq L_n$ implies that $L((n+1)P_0) \neq L(nP_0)$. 

We claim that $L_n = L(nP_0)$ for all $n$. If $n<0$ then $L_n=\{0\}=L(nP_0)$, and $L_0=\mathbb{F}_{q^2}=L(0P_0)$. From the description of $L_n$ one directly can deduce that for $n>2m$, the vector space $L_n$ is generated by the functions $x^ky^\ell$, with $0 \le \ell \le m-1$ and $(k,\ell)$ lying on the boundary or inside the polygon with vertices $(0,0)$, $(-(i+2),m)$, $(-n/m,0)$ and $(-i-n/m,m)$. This polygon has area $n-m$. Moreover, if $n$ is a multiple of $m$, the boundary of the polygon contains $2n/m$ many points with integer coordinates, of which $n/m-1$ satisfy $\ell=m$. Hence for $n>2m$ a multiple of $m$, we deduce from the above and Pick's theorem that 
$$\dim \, L_n = n-m-(2n/m)/2+1+(n/m+1)=n-m+2.$$
From the Riemann-Roch theorem for any $n>2m$ we have $\dim \, L(nP_0)=n+1-g=n-m+2$. We conclude that if $n>2m$ is a multiple of $m$, then $L_n=L(nP_0).$

To finish the proof of the claim that $L_n=L(nP_0)$ for all $n$, it is now enough to show that $\dim \, L_{n+1} -\dim \, L_n \in \{0,1\}$ for all $n$. More precisely, this is enough, since $\dim \, L((n+1)P_0) -\dim\, L(nP_0) \in \{0,1\}$, $L_{n+1} \neq L_n$ implies that $L((n+1)P_0) \neq L(nP_0)$, and we already have shown that $L_0=L(0P_0)$ and $L_n=L(nP_0)$ for infinitely many $n>0$. Now, note that since $\gcd(i,m)=1$, if $k_1m+\ell_1 i=k_2m+\ell_2 i$ for $k_1,k_2, 0 \leq \ell_1,\ell_2 \leq m-1$ in $ \mathbb{Z}$, then $\ell_1=\ell_2$ and $k_1=k_2$.
This shows that for any $n \in \mathbb{Z}$ we have $$\dim \, L_{n+1} -\dim \, L_n \in \{0,1\}.$$

Now that we know that $L_n=L(nP_0)$ for all integers $n$, we can conclude that the ring $\bigcup_{n \ge 0} L(nP_0)$ is generated as an $\mathbb{F}_{q^2}$-vector space by the set of functions 
$$\{x^ky^\ell \mid 0 \le \ell \le m-1, km+\ell(i+2) \le 0\}.$$
This implies that this ring is generated as an $\mathbb{F}_{q^2}$-algebra by the set of functions
$$\{x^{-1}\} \cup \{x^{-\left\lceil \frac{\ell(i+2)}{m} \right\rceil}y^\ell \mid 0 \le \ell \le m-1\}.$$
Hence the semigroup $H(P_0)$ is generated by the corresponding pole orders: 
$$m \quad \text{and} \quad \left\lceil \frac{\ell(i+2)}{m}\right\rceil m-\ell i, \quad \text{where} \quad 0 \le \ell \le m-1.$$
Since for $\ell=0$, this pole order is simply $0$, it can be removed when describing a generating set. \end{proof}

\begin{remark}\label{rem:Apery}
The proof of Theorem \ref{weierstrass_generators_P0_Pinf_thm} actually implies that the elements of $H(P_0)$ correspond exactly to the pole orders of functions of the form $x^{-j}\cdot x^{-\left\lceil \frac{\ell(i+2)}{m} \right\rceil}y^\ell \in \cF_i$, where $j \ge 0$ and $0 \le \ell \le m-1$. 
Since $x^{-1}$ has pole order $m$ at $P_0$, this implies that each of the listed generators of $H(P_0)$ in Theorem \ref{weierstrass_generators_P0_Pinf_thm}, except for $m$ itself, is the smallest possible element of $H(P_0)$ in its congruence class modulo $m$. Hence, the displayed generators form an Ap\'ery set of $H(P_0)$ with respect to $m$. Exactly the same is true for the listed generators for $H(P_\infty)$: they form an Ap\'ery set of $H(P_\infty)$ with respect to $m.$
\end{remark}

\begin{remark}
A set of generators of the semigroup at $P_\infty$ was also computed in \cite[Theorem 3.2 ]{M2023}. The resulting set of generators is 
$$H(P_\infty)=\left\langle m,i+2, m\ell-(i+2) \left\lfloor \frac{ (\ell-2) m}{i} \right\rfloor \mid 3 \le \ell \le i+1 \right\rangle.$$ 
These generators do not form an Ap\'ery set with respect to $m$, but have the advantage of being a smaller set of generators.
In the proof of Theorem \ref{weierstrass_generators_P0_Pinf_thm}, we computed bases of several Riemann-Roch spaces. We would also like to point out that alternative bases for such Riemann-Roch spaces in Kummer extensions can be obtained using \cite[Theorem 2]{HY2018} and \cite[Theorem 3.1]{GN2022}. 
\end{remark}

The following corollary indicates several gaps of the semigroup $H(P_\infty)$ and will be useful later on.

\begin{corollary}\label{divisors_of_m_are_gaps_cor}
    Any divisor $d > 0$ of $m$, different from $m$, is a gap at $P_\infty$.
\end{corollary}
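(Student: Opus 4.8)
The plan is to read off a clean membership criterion for $H(P_\infty)$ from Theorem \ref{weierstrass_generators_P0_Pinf_thm} together with Remark \ref{rem:Apery}. Combining these, an integer $n$ lies in $H(P_\infty)$ precisely when it admits a representation $n = km + \ell(i+2)$ with $0 \le \ell \le m-1$ and $km + \ell i \ge 0$: these are exactly the pole orders at $P_\infty$ of the functions $x^k y^\ell$ that are regular away from $P_\infty$, and by the Apéry property every element of $H(P_\infty)$ arises in this form after adding a nonnegative multiple of $m$ to one of the listed generators. I would then argue by contradiction, assuming that a divisor $d$ of $m$ with $0 < d < m$ lies in $H(P_\infty)$, so that $d = km + \ell(i+2)$ for suitable $k,\ell$ as above, and deriving a contradiction.

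The heart of the argument is a short piece of modular arithmetic. Subtracting $2\ell$ from the defining equation gives $km + \ell i = d - 2\ell$, so the regularity constraint $km + \ell i \ge 0$ forces $\ell \le d/2 < d$. Next I would write $m = dt$ with $t \ge 2$ (possible since $d \mid m$ and $d < m$), multiply $d = km + \ell(i+2)$ by $t$, and use $td = m$ to obtain $t\ell(i+2) = m(1 - tk)$, hence $m \mid t\ell(i+2)$. Since $\gcd(i+2,m)=1$ by the standing assumption in \eqref{curveC}, this yields $m \mid t\ell$, and as $m = dt$ we conclude $d \mid \ell$. Combined with $0 \le \ell < d$ this forces $\ell = 0$, whence $d = km$ is a multiple of $m$, contradicting $0 < d < m$.

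The main (and essentially the only) obstacle is setting up the membership criterion correctly: one must ensure that $d \in H(P_\infty)$ genuinely forces a representation $d = km + \ell(i+2)$ with $km + \ell i \ge 0$ and $0 \le \ell \le m-1$, rather than merely the congruence $d \equiv \ell(i+2) \pmod m$. This is exactly where the Apéry set statement of Remark \ref{rem:Apery} is indispensable, since it guarantees that the smallest element of $H(P_\infty)$ in each residue class modulo $m$ is one of the listed generators, so that the sign constraint $km + \ell i \ge 0$ (equivalently $\ell \le d/2$) may legitimately be imposed. It is precisely the interplay between this size bound $\ell < d$ and the divisibility $d \mid \ell$ that produces the contradiction; once the criterion is in place, the remaining computation is routine.
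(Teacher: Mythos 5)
Your proof is correct and follows essentially the same route as the paper: both use the Ap\'ery-set property from Remark \ref{rem:Apery} to write $d$ as $km+\ell(i+2)$ with $km+\ell i\ge 0$, deduce $d\mid\ell$ from $d\mid m$ and $\gcd(i+2,m)=1$, and then reach a contradiction from a size bound on $\ell$. The only difference is cosmetic: you conclude via $\ell\le d/2$ and $d\mid\ell$ forcing $\ell=0$, whereas the paper divides by $d$ and derives $2\ell'\le 1$.
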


\begin{proof}
    Let $d<m$ be a positive divisor of $m$ and assume that $d \in H(P_\infty)$. As noted in Remark \ref{rem:Apery}, each generator given in Theorem \ref{weierstrass_generators_P0_Pinf_thm}, except for $m$, is the smallest element in the semigroup in its congruence class modulo $m$. In particular, since $d < m$, there is an integer $\ell\in \{1, \dots, m-1\}$ such that 
    $$
        d = - \left\lfloor \frac{\ell i}{m} \right\rfloor m + \ell(i+2). 
    $$

    Now, $\gcd(i+2,d) \leq \gcd(i+2,m) = 1$, so $d$ divides $\ell$. Hence we may write $\ell' = \frac{\ell}{d}$, for some positive integer $\ell'$. Dividing the above equation by $d$ and rearranging yields
    $$
        \ell' (i+2) = 1 + \left \lfloor \frac{\ell i}{m} \right \rfloor \frac{m}{d} \leq  1 + \frac{\ell i}{d} = 1 + \ell' i.
    $$
    This implies $2\ell' \leq 1$, which is a contradiction, and we conclude that $d$ is a gap for $P_\infty$.
\end{proof}

\subsection{Distinguishing the Weierstrass semigroup at $P_{\pm\alpha}$ from the Weierstrass semigroups at $P_\infty$ and $P_0$}

In this subsection, we will show that in most cases the semigroup at $P_{\pm\alpha}$ is distinct from both the semigroup at $P_\infty$ and the semigroup at $P_0$. The first step towards proving this is the following lemma describing the holomorphic differentials of $\cF_i$.

\begin{lemma}\label{lem:homomorphic_diff}
Let $i \in \Z$ satisfy $\gcd(i,m)=\gcd(i+2,m)=1$.
Further, for $k,\ell \in \mathbb{Z}$, denote by $\omega_{k,\ell}$ the differential on $\cF_i$ given as follows: 
$$\omega_{k,\ell}=x^{k-1}y^{\ell-m}dx.$$
Then the space of holomorphic differentials on $\cF_i$ has basis
$$\{\omega_{k,l} \mid \ell > 0, km+\ell i > mi \text{ and }  km+\ell (i+2) < m(i+2)\}.$$
\end{lemma}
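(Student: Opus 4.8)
The plan is to compute the divisor of each $\omega_{k,\ell}$ directly from the principal divisors recorded in \eqref{divisors} and then read off holomorphicity as a collection of local conditions. Since $(x)$, $(y)$ and $(dx)$ are all supported on the four places $P_0, P_\infty, P_\alpha, P_{-\alpha}$, so is $(\omega_{k,\ell}) = (k-1)(x) + (\ell-m)(y) + (dx)$, and at every other place $\omega_{k,\ell}$ is automatically regular. A short computation then gives
$$v_{P_0}(\omega_{k,\ell}) = km + \ell i - mi - 1, \qquad v_{P_\infty}(\omega_{k,\ell}) = m(i+2) - km - \ell(i+2) - 1,$$
together with $v_{P_\alpha}(\omega_{k,\ell}) = v_{P_{-\alpha}}(\omega_{k,\ell}) = \ell - 1$. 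Requiring all four valuations to be nonnegative is exactly equivalent to the three stated conditions $\ell > 0$, $km + \ell i > mi$ and $km + \ell(i+2) < m(i+2)$. This shows that the listed $\omega_{k,\ell}$ are precisely the holomorphic members of the family.

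Next I would pin down the index set. Writing the last two conditions as $(m-\ell)i/m < k < (m-\ell)(i+2)/m$, one sees that this open interval has length $2(m-\ell)/m$, which is positive only when $\ell < m$; combined with $\ell > 0$ this forces $1 \le \ell \le m-1$. Consequently each admissible $\omega_{k,\ell}$ has $y$-exponent $\ell-m$ lying strictly between $-m$ and $0$, so the underlying functions $x^{k-1}y^{\ell-m}$ are ``reduced''. Linear independence over $\fqs$ then follows in the standard way: functions with different $\ell$ involve distinct residues of the $y$-power modulo $m$, hence are independent over $\fqs(x)$, while for fixed $\ell$ the factors $x^{k-1}$ are independent over $\fqs$; multiplying by the fixed nonzero differential $dx$ preserves independence. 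Thus the listed differentials are linearly independent holomorphic differentials on $\cF_i$.

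Finally I would count them and compare with the genus. For each $j := m-\ell \in \{1,\dots,m-1\}$, the number of integers $k$ in the interval $(ji/m, j(i+2)/m)$ equals $\lfloor j(i+2)/m \rfloor - \lfloor ji/m \rfloor$, where $\gcd(i,m)=\gcd(i+2,m)=1$ is used to keep both endpoints off the integers. Summing over $j$ and invoking the classical identity $\sum_{j=1}^{m-1}\lfloor jc/m\rfloor = (c-1)(m-1)/2$, valid whenever $\gcd(c,m)=1$, first with $c=i+2$ and then with $c=i$, gives a total of $\tfrac{(i+1)(m-1)}{2} - \tfrac{(i-1)(m-1)}{2} = m-1$. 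Since $\cF_i$ has genus $g=m-1$, the space of holomorphic differentials has dimension $m-1$, so these $m-1$ linearly independent holomorphic differentials automatically form a basis, and no separate spanning argument is required.

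I expect the divisor computation to be entirely routine; the one place where care is needed is the counting step, where the coprimality hypotheses enter twice — once to guarantee the interval endpoints are non-integral, so that the count is a clean difference of floors, and once to apply the floor-sum identity. This is the step I would treat as the crux, since it is exactly where the arithmetic assumptions on $i$ force the total to equal the genus.
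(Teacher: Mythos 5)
Your proposal is correct, and its overall skeleton matches the paper's proof exactly: compute $(\omega_{k,\ell})$ from \eqref{divisors}, read off the three holomorphicity conditions, establish linear independence from $1\le\ell\le m-1$ together with the fact that $y$ has degree $m$ over $\overline{\mathbb{F}}_{q^2}(x)$, and then count the admissible pairs $(k,\ell)$ and compare with the genus $g=m-1$ so that no spanning argument is needed. The one genuine difference is the crux counting step. The paper observes that the admissible $(k,\ell)$ are precisely the interior lattice points of the triangle $\Delta$ with vertices $(i,0)$, $(i+2,0)$, $(0,m)$, which has area $m$, and applies Pick's theorem; the coprimality hypotheses enter there to show $\Delta$ has exactly four boundary lattice points, yielding $m = I + 4/2 - 1$, i.e.\ $I = m-1$. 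You instead slice the same triangle row by row, writing the count for fixed $\ell$ as $\left\lfloor \frac{j(i+2)}{m}\right\rfloor - \left\lfloor \frac{ji}{m}\right\rfloor$ with $j=m-\ell$ (coprimality keeping the endpoints non-integral), and sum using the identity $\sum_{j=1}^{m-1}\lfloor jc/m\rfloor = \frac{(c-1)(m-1)}{2}$ for $\gcd(c,m)=1$, applied with $c=i+2$ and $c=i$. Both uses of the coprimality hypotheses are the correct analogues of each other; your route is more elementary and self-contained (the floor-sum identity is the standard Gauss-type lattice count and holds for any integer $c$ coprime to $m$, so it also covers $i$ negative or larger than $m$ without comment), while the paper's Pick argument is shorter once the boundary points are identified and makes the geometric picture of the Newton polygon more visible. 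Either argument is acceptable.
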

\begin{proof}
For $k,\ell \in \mathbb{Z}$, consider the differential 
$$\omega_{k,\ell}=x^{k-1}y^{\ell-m}dx.$$ 
From Equation \eqref{divisors}, we have
$$(\omega_{k,\ell})=(km+\ell i-mi-1)P_0+(\ell-1)(P_\alpha+P_{-\alpha})-(km+\ell(i+2)-m(i+2)+1)P_\infty.$$
This means that the differential $\omega_{k,\ell}$ has no poles if and only if $\ell > 0$, $km+\ell i > mi$ and  $km+\ell (i+2) < m(i+2).$ Note that the number of pairs $(k,\ell)$ satisfying these conditions is precisely the number of integral points, i.e., points in $\mathbb{Z}^2$, lying inside the triangle $\Delta$ with vertices $(i,0)$, $(i+2,0)$ and $(0,m)$. Pick's theorem implies that there are precisely $m-1$ such points $(k,\ell) \in \mathbb{Z}^2$ in the interior of $\Delta$. 
Here we used the condition $\gcd(i,m)=\gcd(i+2,m)=1$ to see that there are exactly four integral points on the boundary of $\Delta$, namely the points $(i,0)$, $(i+1,0)$ $(i+2,0)$ and $(0,m)$. 

We can conclude that we have found $m-1$ holomorphic differentials. Since any $(k,\ell)$ in the interior of $\Delta$ satisfies $1 \le \ell \le m-1$ and the defining equation of $\cF_i$ in Equation \eqref{curveC} has $y$-degree $m$, these $m-1$ differentials are linearly independent. Moreover, since the genus of $\cF_i$ is $m-1$, we see that we have found a basis of the space of holomorphic differentials.
\end{proof}

\begin{remark}\label{gaps_at_P0_and_Pinf_lemma}
From Proposition \ref{propgaps}, the value $v_P(\omega)+1$ is a gap for $P$ for any place $P$ of $\cF_i$ and any holomorphic differential $\omega$ on $\cF_i$. Using Lemma \ref{lem:homomorphic_diff}, we immediately obtain that the gaps at the places $P_0$ and $P_\infty$ of the function field $\cF_i$ are 
\begin{align*}
&G(P_0)=\{ km+\ell i-mi \mid  \ell > 0, km+\ell i > mi, km+\ell (i+2) < m(i+2)\}\\
&\text{and}\\
&G(P_\infty)=\{-km-\ell (i+2)+m(i+2) \mid  \ell > 0, km+\ell i > mi, km+\ell (i+2) < m(i+2)\}.
\end{align*}
\end{remark}

Similarly, we get the following result regarding $G(P_{\pm\alpha})$.

\begin{lemma}\label{gapsequence_Palph_lemma}
Let $i \in \Z$ satisfy $\gcd(i,m)=\gcd(i+2,m)=1$. The gapsequence of the place $P_{\pm \alpha}$ of $\cF_i$ contains the numbers $1, \dots, \floor{\frac{m+1}{2}}$. 
\end{lemma}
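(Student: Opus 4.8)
The plan is to read off the gaps at $P_{\alpha}$ from the holomorphic differentials, using Proposition \ref{propgaps} and the explicit basis found in Lemma \ref{lem:homomorphic_diff}. By Proposition \ref{propgaps}, $v_{P_\alpha}(\omega)+1$ is a gap at $P_\alpha$ for every holomorphic differential $\omega$, and the divisor computation in the proof of Lemma \ref{lem:homomorphic_diff} shows that the basis differential $\omega_{k,\ell}=x^{k-1}y^{\ell-m}dx$ satisfies $v_{P_\alpha}(\omega_{k,\ell})=\ell-1$. Hence, as soon as there is an integral point $(k,\ell)$ in the interior of the triangle $\Delta$ with vertices $(i,0)$, $(i+2,0)$, $(0,m)$ at height $\ell=n$, the integer $n$ is a gap at $P_\alpha$. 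So I would reduce the statement to the purely combinatorial claim that for each $n\in\{1,\dots,\lfloor\frac{m+1}{2}\rfloor\}$ there is an integer $k$ with $(k,n)$ interior to $\Delta$.

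Rewriting the defining inequalities of the interior of $\Delta$ at height $n$, such a $k$ is precisely an integer lying in the open interval $I_n:=\left(\frac{i(m-n)}{m},\,\frac{(i+2)(m-n)}{m}\right)$, whose length is $\frac{2(m-n)}{m}$. When $n<m/2$ this length exceeds $1$, so $I_n$ automatically contains an integer; this already settles every $n\le\lceil m/2\rceil-1$, that is, all required values except the single boundary value $n^*=\lfloor\frac{m+1}{2}\rfloor$.

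The hard part, and the only place where the arithmetic hypotheses $\gcd(i,m)=\gcd(i+2,m)=1$ are genuinely needed, is the boundary value $n^*$, where $I_{n^*}$ has length at most $1$. For $m$ even one has $n^*=m/2$ and $I_{n^*}=(i/2,\,i/2+1)$; since $m$ is even, $\gcd(i,m)=1$ forces $i$ odd, so $i/2\notin\mathbb{Z}$ and $I_{n^*}$ contains $(i+1)/2$. For $m$ odd one has $n^*=(m+1)/2$, so with $j:=m-n^*=(m-1)/2$ the interval $I_{n^*}=\left(\frac{ij}{m},\,\frac{(i+2)j}{m}\right)$ has length $\frac{m-1}{m}<1$; here $\gcd(j,m)=1$ (because $m$ is odd and $\gcd(m-1,m)=1$) together with $\gcd(i,m)=\gcd(i+2,m)=1$ ensures that neither endpoint is an integer, and a direct check shows that $I_{n^*}$ can fail to contain an integer only when $ij\equiv 1\pmod m$. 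Since $2j\equiv-1\pmod m$, this would force $i\equiv-2\pmod m$, which is exactly what $\gcd(i+2,m)=1$ excludes. Thus $I_{n^*}$ contains an integer in all cases, which finishes the argument; I expect this boundary analysis to be the main obstacle, whereas the reduction and the length-greater-than-one range are routine.
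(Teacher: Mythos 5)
Your proposal is correct and follows essentially the same route as the paper: both reduce the statement, via Proposition \ref{propgaps} and the differentials $\omega_{k,\ell}$ of Lemma \ref{lem:homomorphic_diff}, to finding an integer $k$ in the interval $\left(\frac{i(m-\ell)}{m},\frac{(i+2)(m-\ell)}{m}\right)$ for each $\ell\le\lfloor\frac{m+1}{2}\rfloor$. The only difference is the last step: the paper exhibits the integer $s+1$ uniformly by writing $i(m-\ell)=sm+r$ with $0<r<m$, while you split off the boundary value $\lfloor\frac{m+1}{2}\rfloor$ and rule out failure there by the congruence argument $ij\equiv 1\pmod m\Rightarrow i\equiv-2\pmod m$, both of which are correct uses of the hypotheses $\gcd(i,m)=\gcd(i+2,m)=1$.
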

\begin{proof}
    Using the holomorphic differentials given in Lemma \ref{lem:homomorphic_diff} combined with the fact that $v_{P_{\pm\alpha}}(\omega_{k,\ell}) + 1 = \ell$, we see that $\ell$ is a gap at $P_{\pm\alpha}$ for any pair $(k,\ell) \in \mathbb{Z}^2$ such that $\ell> 0$, $km+\ell i > mi$, and $km + \ell(i+2) < m(i+2)$.

    In particular, a fixed $\ell$ satisfying $0< \ell < m$ is a gap if there exists an integer $k$ in the interval
    $$
        \left\lbrack \frac{i(m-\ell)}{m}, \frac{(i+2)(m-\ell)}{m} \right\rbrack.
    $$
   Note that we may include the endpoints in the interval, since neither of them are integers, as follows from the assumptions $0 < \ell <m$ and $\gcd(i,m)=\gcd(i+2,m)=1$. 
   
   Now, assume $0 < \ell \leq  \lfloor\frac{m+1}{2}\rfloor$ and write $i(m-\ell) = s m + r$ for $s, r \in \mathbb{Z}$, with $0 < r < m$. We claim that $s+1$ is in the above interval. Indeed, we see that 
   $$
        \frac{i(m-l)}{m} = s + \frac{r}{m} < s + 1,
   $$
   and 
   $$
        \frac{(i+2)(m-\ell)}{m} = s + \frac{r + 2(m-\ell)}{m} \geq s + \frac{1 + 2\left(m-\frac{m+1}{2}\right)}{m} = s + 1.
   $$
\end{proof}

\begin{theorem}\label{distinguishing_Palph_Pinf_cor}
    For $1 \le i < \frac{m-2}{2}$, the Weierstrass semigroups at the places $P_{\pm\alpha}$ and $P_\infty$ of $\cF_i$ are distinct.
\end{theorem}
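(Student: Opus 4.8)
The plan is to produce a single integer that belongs to $H(P_\infty)$ but is a gap at $P_{\pm\alpha}$; this immediately shows the two numerical semigroups cannot coincide. The natural candidate is $i+2$ itself. Indeed, taking $\ell=1$ in the list of generators of $H(P_\infty)$ from Theorem \ref{weierstrass_generators_P0_Pinf_thm} gives $-\lfloor i/m\rfloor m+(i+2)=i+2$, because $1\le i\le m-1$ forces $\lfloor i/m\rfloor=0$. Hence $i+2\in H(P_\infty)$ with no further work.

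Next I would check that $i+2$ falls in the range of values that Lemma \ref{gapsequence_Palph_lemma} guarantees to be gaps at $P_{\pm\alpha}$, namely $1,\dots,\lfloor(m+1)/2\rfloor$. This is where the hypothesis $i<(m-2)/2$ enters: it is exactly the condition ensuring $i+2\le\lfloor(m+1)/2\rfloor$. Concretely, if $m$ is odd then $i<(m-2)/2$ forces $i\le(m-3)/2$, so $i+2\le(m+1)/2=\lfloor(m+1)/2\rfloor$; if $m$ is even then $i\le m/2-2$, so $i+2\le m/2=\lfloor(m+1)/2\rfloor$. In both cases $3\le i+2\le\lfloor(m+1)/2\rfloor$, so Lemma \ref{gapsequence_Palph_lemma} applies and $i+2$ is a gap at $P_{\pm\alpha}$.

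Combining the two observations, $i+2$ lies in $H(P_\infty)$ but not in $H(P_{\pm\alpha})$, so the semigroups are distinct, as claimed. The argument is essentially a matter of choosing the right witness; the only real content is the arithmetic comparison $i+2\le\lfloor(m+1)/2\rfloor$, and I would take some care there to confirm that it is precisely the strict inequality in the hypothesis that makes the comparison work. In particular, the boundary case $i=(m-2)/2$ (for even $m$) is genuinely excluded, since then $i+2=m/2+1$ already exceeds $\lfloor(m+1)/2\rfloor$ and this particular witness would break down. I expect no serious obstacle beyond this bookkeeping.
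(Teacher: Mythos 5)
Your proposal is correct and is essentially the paper's own proof: the same witness $i+2$, shown to lie in $H(P_\infty)$ (the paper reads this off the divisor of $y$ in Equation \eqref{divisors}, you off the $\ell=1$ generator in Theorem \ref{weierstrass_generators_P0_Pinf_thm} --- equivalent one-line observations), and shown to be a gap at $P_{\pm\alpha}$ via Lemma \ref{gapsequence_Palph_lemma} using $i<\frac{m-2}{2}\Rightarrow i+2\le\lfloor\frac{m+1}{2}\rfloor$. Your case split on the parity of $m$ just makes explicit the floor comparison that the paper states in one breath.
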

\begin{proof}
    Using the formula for the divisor of $y$ as given in Equation \eqref{divisors}, it follows directly that $i+2 \in H(P_\infty)$. On the other hand, Lemma \ref{gapsequence_Palph_lemma} implies that $i+2 \notin H(P_{\pm\alpha})$ since $i < \frac{m-2}{2}$ implies $i+2 \leq \frac{m + 1}{2}$.
\end{proof}

The rest of this subsection is devoted to showing that the semigroups of $P_0$ and $P_{\pm\alpha}$ are distinct for $i \notin \{1, \frac{m-2}{2} \}$.

\begin{lemma}\label{P0_polenumber_lessthan_lemma}
Let $m\ge 19$ and suppose that $i$ satisfies $\gcd(i,m)=\gcd(i+2,m)=1$ and $2<i<\lfloor\frac{m-3}{2}\rfloor$. Then $H(P_0)$ contains a positive integer less than or equal to $\frac{m+1}{2}$.
\end{lemma}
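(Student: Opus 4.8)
The plan is to exhibit a single explicit element of $H(P_0)$ that is at most $\frac{m+1}{2}$, rather than to analyze the whole gap sequence. Theorem~\ref{weierstrass_generators_P0_Pinf_thm} gives $H(P_0)=\langle m,\ \lceil \ell(i+2)/m\rceil m-\ell i \mid 1\le \ell\le m-1\rangle$, so each $g_\ell:=\lceil \ell(i+2)/m\rceil m-\ell i$ lies in $H(P_0)$. I would restrict to those $\ell$ with $\ell(i+2)<m$, for which the ceiling equals $1$ and hence $g_\ell=m-\ell i$; among these $g_\ell$ is decreasing in $\ell$, so the smallest is attained at $\ell_0:=\lfloor (m-1)/(i+2)\rfloor$. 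Since $\ell_0(i+2)<m$ forces $\ell_0 i<m$, the number $g_{\ell_0}=m-\ell_0 i$ is a positive element of $H(P_0)$, and the statement reduces to the purely arithmetic inequality $m-\ell_0 i\le \frac{m+1}{2}$, i.e.\ $2\ell_0 i\ge m-1$.

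To prove $2\ell_0 i \ge m-1$ I would set $j:=i+2$ and $N:=m-1$ and use the Euclidean division $N=\ell_0 j+\rho$ with $0\le \rho\le j-1$. A one-line computation gives $2\ell_0 i-N=2\ell_0(j-2)-(\ell_0 j+\rho)=\ell_0(j-4)-\rho$, so it suffices to show $\ell_0(j-4)\ge \rho$.

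Two inputs control the two sides. First, the coprimality hypothesis $\gcd(i+2,m)=1$ means $j\nmid m$, so $m\bmod j\in\{1,\dots,j-1\}$ and therefore $\rho=(m\bmod j)-1\le j-2$; this sharpening of the trivial bound $\rho\le j-1$ is essential. Second, the range hypothesis $i<\lfloor (m-3)/2\rfloor$ yields $2(i+2)\le m-1=N$, hence $\ell_0\ge 2$. With these, for $i\ge 4$ (so $j\ge 6$) one gets $\ell_0(j-4)\ge 2(j-4)\ge j-2\ge \rho$, closing the case. The only value left is $i=3$ (so $j=5$, $j-4=1$), where one needs $\ell_0\ge \rho$; here $\rho\le j-2=3$, while $m\ge 19$ gives $\ell_0=\lfloor (m-1)/5\rfloor\ge 3$, so indeed $\ell_0\ge \rho$.

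The main obstacle, and the reason the hypotheses $m\ge 19$ and $i>2$ appear, is that one cannot afford real-number approximations such as $\ell_0\approx (m-1)/(i+2)$: near the top of the allowed range of $i$ one has $\ell_0=2$, and the floor error in such an estimate is exactly of the size one is trying to control. Indeed the inequality is sharp, with equality $g_{\ell_0}=\frac{m+1}{2}$ at $m=19$, $i=3$. Working with the exact remainder $\rho$ and exploiting $\gcd(i+2,m)=1$ to gain the extra unit $\rho\le j-2$ is what makes a uniform argument possible, leaving only the single small case $j=5$ to be handled by the explicit bound $\ell_0\ge 3$ coming from $m\ge 19$.
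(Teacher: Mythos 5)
Your proof is correct, and its core step is the same as the paper's: both arguments take the generator of $H(P_0)$ from Theorem~\ref{weierstrass_generators_P0_Pinf_thm} with $\ell=\lfloor m/(i+2)\rfloor$ (your $\ell_0=\lfloor (m-1)/(i+2)\rfloor$ equals this, since $\gcd(i+2,m)=1$ forces $(i+2)\nmid m$), so both must verify the same inequality $m-\ell_0 i\le \frac{m+1}{2}$. Where you genuinely diverge is in how this inequality is closed. The paper writes $m=\ell_0(i+2)+r$ with $0<r<i+2$ and uses the real estimate $m-\ell_0 i\le \frac{2m+i(i+1)}{i+2}$, reducing the claim to the quadratic inequality $i^2-\frac{m-1}{2}i+m-1\le 0$; this covers $2<i<\lfloor\frac{m-3}{2}\rfloor-1$ but fails at the top endpoint $i=\lfloor\frac{m-3}{2}\rfloor-1$, which the paper patches by a direct computation with $\ell_0=2$. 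You instead keep exact Euclidean division of $m-1$ by $j=i+2$, reduce to $\ell_0(j-4)\ge\rho$, and dispatch all $i\ge 4$ uniformly from $\ell_0\ge 2$ and the coprimality-sharpened bound $\rho\le j-2$, leaving only $i=3$ as a separate case (where $m\ge 19$ gives $\ell_0\ge 3\ge\rho$). So the two proofs need a patch at opposite ends of the range: the paper's approximation degrades for large $i$, your exact count degrades for small $i$, where $j-4$ is tiny. Your version buys exactness — in particular it identifies the equality case $m=19$, $i=3$, showing the bound $\frac{m+1}{2}$ in the lemma is sharp — and avoids solving a quadratic; the paper's buys a single closed-form estimate that handles the bulk of the range with less integer bookkeeping. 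Both are complete proofs of the statement.
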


\begin{proof}
    From Theorem \ref{weierstrass_generators_P0_Pinf_thm} we see that $\lceil \frac{\ell(i+2)}{m}\rceil m - \ell i$ is a pole number for $P_0$, for any $1 \leq \ell \leq m-1$. By choosing $\ell = \lfloor \frac{m}{i+2} \rfloor$ this becomes $m- \lfloor \frac{m}{i+2} \rfloor i $. 

    Writing $m = \ell (i+2) + r$ for some $r \in \mathbb{Z}$, with $0 < r < (i+2)$, we get the bound
    $$
        m- \left\lfloor \frac{m}{i+2} \right\rfloor i = m- i \left(\frac{m}{i+2} - \frac{r}{i+2} \right) 
            \leq  m - \frac{im}{i+2} + \frac{i(i+1)}{i+2} 
             = \frac{2m + i(i+1)}{i+2}.
    $$

    In most cases, this bound is strong enough to show what we want. Indeed, 
    $$
        \frac{2m + i(i+1)}{i+2} \leq \frac{m+1}{2} \Leftrightarrow i^2 - \frac{m-1}{2} i + m-1 \leq 0,
    $$
    and for $m\geq 19$ we see that this is true when $i$ satisfies $2< i < \lfloor \frac{m-3}{2} \rfloor - 1$.

    The last case, $i = \lfloor \frac{m-3}{2} \rfloor - 1$, can be handled in a very similar way. Choosing again $\ell = \lfloor \frac{m}{i+2} \rfloor =  2$ we get a pole number 
    $$
        m- \left\lfloor \frac{m}{i+2} \right\rfloor i = m - 2 \left( \left\lfloor \frac{m-3}{2} \right\rfloor - 1 \right) \leq m - 2 \left( \frac{m-3}{2} -2 \right) = 7, 
    $$
    Since $7 \leq \frac{m+1}{2}$ for $m\geq 19$ (even for $m \geq 13$), this finishes the proof.
\end{proof}

For even $m$, the above is enough to distinguish the Weierstrass groups of $P_{\pm\alpha}$ and $P_0$ for any $i < \frac{m-2}{2}$, since $\lfloor \frac{m-3}{2} \rfloor - 1 = \frac{m-2}{2} - 2$ and $i = \frac{m-2}{2} - 1$ is in contradiction with $(i+2,m)=1$. For odd $m$, we still need to treat the cases $i=2$ and $i = \frac{m-3}{2}$. 

\begin{lemma}\label{Palph_P0_specialcase_lemma}
    For $i \in \{2, \frac{m-3}{2}\}$, $m$ odd, and $m > 9$, it holds that $1, \dots, \frac{m+3}{2}$ are gaps for $P_{\pm\alpha}$ while $P_0$ has a pole number less than or equal to $\frac{m+3}{2}$.
\end{lemma}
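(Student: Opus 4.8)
The plan is to treat the two assertions of the lemma—the one about $P_{\pm\alpha}$ and the one about $P_0$—separately, in each case reducing to a short substitution into results already established. For the gaps at $P_{\pm\alpha}$, note first that since $m$ is odd, Lemma \ref{gapsequence_Palph_lemma} already yields that $1, \dots, \frac{m+1}{2}$ are gaps, so the only value left to handle is $\frac{m+3}{2} = \frac{m+1}{2} + 1$. I would reuse the criterion extracted inside the proof of Lemma \ref{gapsequence_Palph_lemma}: a value $\ell$ with $0 < \ell < m$ is a gap at $P_{\pm\alpha}$ whenever the interval $\left[\frac{i(m-\ell)}{m}, \frac{(i+2)(m-\ell)}{m}\right]$ contains an integer, the endpoints being non-integral by the hypotheses $\gcd(i,m)=\gcd(i+2,m)=1$. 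Taking $\ell = \frac{m+3}{2}$, so that $m - \ell = \frac{m-3}{2}$, this interval has width $\frac{m-3}{m} < 1$, and it remains only to locate one integer inside it for each of the two admissible values of $i$.

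For $i = 2$ the interval is $\left[1 - \frac{3}{m}, \, 2 - \frac{6}{m}\right]$, which contains $k=1$ as soon as $m \geq 6$; this disposes of the case $i=2$. For $i = \frac{m-3}{2}$, where $i+2 = \frac{m+1}{2}$, the interval equals $\left[\frac{(m-3)^2}{4m}, \, \frac{(m+1)(m-3)}{4m}\right]$, whose endpoints lie near $m/4$. Here I would split on the residue of $m$ modulo $4$: writing $m = 4t+1$ one checks that $k = t-1$ falls in the interval once $t \geq 2$ (equivalently $m \geq 9$), while writing $m = 4t+3$ one checks that $k = t$ works once $t \geq 1$. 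Since $m > 9$, both cases are covered, finishing the assertion about $P_{\pm\alpha}$.

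For the pole number at $P_0$, I would exhibit a single generator from Theorem \ref{weierstrass_generators_P0_Pinf_thm}, namely a number of the form $\left\lceil \frac{\ell(i+2)}{m}\right\rceil m - \ell i$, lying at or below $\frac{m+3}{2}$. For $i = \frac{m-3}{2}$ the choice $\ell = 1$ gives the pole number $m - i = \frac{m+3}{2}$; concretely this is the pole order at $P_0$ of the function $x^{-1}y$, which is regular at $P_\infty$ precisely because $i+2 < m$. For $i = 2$ the choice $\ell = \lfloor m/4 \rfloor$ gives $m - 2\lfloor m/4 \rfloor$, which equals $\frac{m+1}{2}$ when $m \equiv 1 \pmod 4$ and $\frac{m+3}{2}$ when $m \equiv 3 \pmod 4$; in either case it is at most $\frac{m+3}{2}$, as required.

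The main obstacle is the gap computation for $i = \frac{m-3}{2}$: in contrast with $i=2$, where a single integer $k=1$ works uniformly, here the target interval sits near $m/4$ and whether it captures an integer genuinely depends on $m \bmod 4$, so the short case distinction above seems unavoidable (and explains the role of the bound $m>9$, which guarantees $t\geq 2$ in the residue class $m\equiv 1 \pmod 4$). Everything else is a direct substitution into the interval criterion of Lemma \ref{gapsequence_Palph_lemma} and into the generating set of Theorem \ref{weierstrass_generators_P0_Pinf_thm}, so no further machinery is needed.
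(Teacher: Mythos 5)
Your proposal is correct and follows essentially the same route as the paper: it invokes the interval criterion from Lemma \ref{gapsequence_Palph_lemma} with $\ell = \frac{m+3}{2}$, locates the same integers in the interval (your $k=t$ and $k=t-1$ are exactly the paper's $\frac{m-3}{4}$ and $\frac{m-5}{4}$ for $m\equiv 3,1 \pmod 4$), and uses the same choices $\ell=\lfloor m/4\rfloor$ and $\ell=1$ in Theorem \ref{weierstrass_generators_P0_Pinf_thm} to produce the pole number at $P_0$.
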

\begin{proof}
    We already know from Lemma \ref{gapsequence_Palph_lemma} that $1, \dots, \frac{m+1}{2}$ are gaps for $P_{\pm \alpha}$. To show that $\frac{m+3}{2}$ is also a gap, we consider again the interval from the proof of Lemma \ref{gapsequence_Palph_lemma}. With $\ell = \frac{m+3}{2}$ our task becomes to find an integer inside the interval 
    $$
        \left\lbrack \frac{i(m-3)}{2m}, \frac{(i+2)(m-3)}{2m} \right\rbrack.
    $$
    
    For $i=2$, this simplifies to $\lbrack \frac{m-3}{m}, \frac{2(m-3)}{m}\rbrack$, which contains $1$ for $m>3$. For $i=\frac{m-3}{2}$ the interval becomes 
    $$
        \left\lbrack \frac{(m-3)^2}{4m}, \frac{(m-3)(m+1)}{4m}\right\rbrack,
    $$
    which contains $\frac{m-3}{4}$ when $m \equiv 3 \pmod 4$ and $\frac{m-5}{4}$ when $m \equiv 1 \pmod 4$ and $m > 9$. 

    To show that $P_0$ has a pole number less than or equal to $\frac{m+3}{2}$ we use the same method as in Lemma \ref{P0_polenumber_lessthan_lemma}. For $i=2$ we choose $\ell = \lfloor \frac{m}{4} \rfloor$ and write $m = 4s + r$ for $s,r \in \mathbb{Z}$, with $0 < r <4$. Then, we have a pole number
    $$
        m - \left\lfloor \frac{m}{4} \right\rfloor i = m - 2s \leq \frac{m+3}{2},
    $$
    where the inequality follows from $s = \frac{m-r}{4} \geq \frac{m-3}{4}$. Similarly, for $i = \frac{m-3}{2}$, we choose $\ell = \lfloor \frac{m}{i+2} \rfloor = \lfloor \frac{2m}{m+1} \rfloor = 1$, and get a pole number 
    $$
        m - i = \frac{m+3}{2},
    $$
    as wished.
\end{proof}

By combining the above results and checking the cases $m < 19$ with a computer we get:

\begin{theorem} \label{differentP0Pal}
    For $i$ satisfying $1<i<\frac{m-2}{2}$ and $\gcd(i,m) = \gcd(i+2,m) = 1$, the places $P_0$ and $P_{\pm \alpha}$ of $\cF_i$ have distinct Weierstrass semigroups.
\end{theorem}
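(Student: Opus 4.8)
The plan is to distinguish the two semigroups by producing a single integer lying in one but not the other: since a numerical semigroup is completely determined by its set of gaps, exhibiting an element of $H(P_0)\setminus H(P_{\pm\alpha})$ already forces $H(P_0)\neq H(P_{\pm\alpha})$. The integer I would use is a small pole number of $P_0$, chosen to fall inside the initial block of gaps of $P_{\pm\alpha}$ furnished by Lemma \ref{gapsequence_Palph_lemma}. Concretely, Lemma \ref{gapsequence_Palph_lemma} tells us that $P_{\pm\alpha}$ has no pole number in $\{1,\dots,\lfloor\frac{m+1}{2}\rfloor\}$, so it suffices to locate a pole number of $P_0$ in this range.

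Before the main argument I would clear away the small cases. The two supporting lemmas require $m\ge 19$ and, in the odd case, $m>9$, so I would check the finitely many remaining pairs $(m,i)$ with $m<19$ directly by computer, the semigroups being explicitly computable from Theorem \ref{weierstrass_generators_P0_Pinf_thm} together with Proposition \ref{propgaps}. For $m\ge 19$ I would then split on the parity of $m$.

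For $m$ even the decisive point is that the hypothesis $\gcd(i+2,m)=1$ collapses the admissible range of $i$ to exactly that of Lemma \ref{P0_polenumber_lessthan_lemma}. Indeed, within $1<i<\frac{m-2}{2}$ the only values outside $2<i<\lfloor\frac{m-3}{2}\rfloor$ are $i=2$, which gives $\gcd(i+2,m)=\gcd(4,m)>1$, and $i=\frac{m-4}{2}=\lfloor\frac{m-3}{2}\rfloor$, which gives $\gcd(i+2,m)=\gcd(\tfrac{m}{2},m)>1$; both are therefore inadmissible. For the surviving $i$, Lemma \ref{P0_polenumber_lessthan_lemma} produces a pole number of $P_0$ that is at most $\frac{m+1}{2}$, hence at most $\lfloor\frac{m+1}{2}\rfloor$ as it is an integer, and Lemma \ref{gapsequence_Palph_lemma} shows this value is a gap of $P_{\pm\alpha}$. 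This gives the separating element.

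For $m$ odd the generic subrange $2<i<\frac{m-3}{2}$ is treated identically, using Lemma \ref{P0_polenumber_lessthan_lemma} and Lemma \ref{gapsequence_Palph_lemma}. What remains are exactly the two boundary values $i\in\{2,\frac{m-3}{2}\}$, and these are handled by Lemma \ref{Palph_P0_specialcase_lemma}, which gives $P_0$ a pole number at most $\frac{m+3}{2}$ while certifying that $1,\dots,\frac{m+3}{2}$ are all gaps of $P_{\pm\alpha}$; once more this yields an element of $H(P_0)$ that is a gap of $P_{\pm\alpha}$. Since all the analytic estimates are already contained in the preceding lemmas, the only real obstacle is the bookkeeping: verifying that the gcd conditions line up the parameter ranges so that every admissible pair $(m,i)$ is covered by exactly one of the three mechanisms above, and confirming completeness of the computer check for $m<19$.
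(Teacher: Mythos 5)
Your proposal is correct and takes essentially the same route as the paper: the paper's proof of this theorem is precisely the combination of Lemmas \ref{gapsequence_Palph_lemma}, \ref{P0_polenumber_lessthan_lemma} and \ref{Palph_P0_specialcase_lemma}, together with the same gcd bookkeeping separating even $m$ (where the boundary values of $i$ are inadmissible) from odd $m$ (where $i=2$ and $i=\frac{m-3}{2}$ need the special lemma), and a computer check for $m<19$. The only minor imprecision is that Theorem \ref{weierstrass_generators_P0_Pinf_thm} describes $H(P_0)$ and $H(P_\infty)$ but not $H(P_{\pm\alpha})$, so the small-$m$ computation must obtain the gaps at $P_{\pm\alpha}$ by other means (e.g., via Lemma \ref{lem:homomorphic_diff} or a direct Magma computation), exactly as the paper implicitly does.
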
   

\begin{remark}\label{rem:special_i}
Theorems \ref{distinguishing_Palph_Pinf_cor} and \ref{differentP0Pal} together cover the case $1<i<(m-2)/2$. To complete the picture, let us describe what happens for $i=1$ and $i=(m-2)/2$. 

If $i=1$, we have $\gcd(3,m)=1$ and using this and  \cite[Propositions 4.1, 4.2]{CMQ2023}, one quickly sees that $G(P_0)=G(P_\a)=G(P_{-\a})=\{1,\dots,\lfloor 2m/3 \rfloor\} \cup \{m+1,\dots,m+\lfloor m/3 \rfloor\}.$ Hence we see that $P_0$, $P_{\a}$ and $P_{-\a}$ all have the same Weierstrass semigroup. From Theorem \ref{distinguishing_Palph_Pinf_cor}, we already know that $P_{\infty}$ has a different semigroup. In fact $H(P_\infty)=\langle 3,m\rangle$ as can be seen from for example Theorem \ref{weierstrass_generators_P0_Pinf_thm}.

If $i=(m-2)/2$, all four places $P_\infty$, $P_0$, $P_{\a}$ and $P_{-\a}$ turn out to have the same Weierstrass semigroup. Indeed, as we will see in Subsection \ref{subsec:m-2/2}, the function field $\cF_{(m-2)/2}$ has an automorphism acting as a $4$-cycle on these four places. Note that if $i=(m-2)/2$, then $m$ needs to be even and $i$ needs to be odd, since $\gcd(i,m)=1$. Then in fact $m$ needs to be a multiple of four. Using this, it is not hard to use Theorem \ref{weierstrass_generators_P0_Pinf_thm} to show that the semigroup of places $P_\infty$, $P_0$, $P_{\a}$ and $P_{-\a}$ is $\langle \frac{m}{2}+1,\frac{m}{2}+3,m-3,m-1,m\rangle$. Indeed, considering the generators in the theorem for $H(P_\infty)$ for $\ell=2\ell'+1 < m/2$, one finds the generators $\frac{m}{2}+1,\frac{m}{2}+3,m-3,m-1$. These together with $m$ already generate a semigroup with $m-1$ gaps, so no more generators are needed.
\end{remark}

\section{The automorphism group of $\cF_i$}\label{sec:aut}

We denote by $\aut(\cF_i)$ the $\overline{\mathbb{F}}_{q^2}$-automorphism group of $\overline{\mathbb{F}}_{q^2}\cF_i$. For convenience we will simply call $\aut(\cF_i)$ the automorphism group of $\cF_i$.
As a first observation note that the  automorphism group of $\cF_i$ contains a cyclic subgroup, which we will denote by $G_i$, with $q+1$ elements. In fact  for $i$ even one has
$$G_i:=\{\sigma: (x, y)\mapsto (ax, by)\mid a, b \in \fqs , a^2=b^{\frac{q+1}{2}}=1 \}\subseteq \aut(\cF_i),$$  
and for $i$ odd 
$$G_i:=\{\sigma: (x, y)\mapsto (ax, by)\mid a, b \in \fqs, a^2=1, b^{\frac{q+1}{2}}=a \}\subseteq \aut(\cF_i).$$ 
Our aim is to show the automorphism group of $\cC_i$ quite often coincides with $G_i$ itself.

The fixed field of $G_i$ is given by $\mathbb{F}_{q^2}(x^2)$ and it is easy to see that the only ramified places of $\cF_i$ in the extension $\cF_i/\mathbb{F}_{q^2}(x^2)$ are the places $P_0$, $P_\infty$ and $P_{\pm \a}.$ In other words, the only short orbits of $G_i$ are $\{P_\infty\}$, $\{P_0\}$ and $\{P_\a,P_{-\a}\}.$  
The subgroup
$$H_i:=\{\sigma: (x, y)\mapsto (x, by)\mid b^{\frac{q+1}{2}}=1,  b \in \fqs \}\subseteq G_i$$ has index two in $G_i$ and contains the elements of $G_i$ fixing both $P_\al$ and $P_{-\a}$.

Our aim is to show the automorphism group of $\cF_i$ is not particularly larger than $G_i$, and quite often coincides with $G_i$ itself.
Note that since $\cF_i$ is isomorphic to $\cF_{m-2-i}$, we can assume w.l.o.g. that $1 \le i \leq \lfloor (m-2)/2 \rfloor$ or $i=m-1$. We will deal with the cases $i=(m-2)/2$ and $i=m-1$
later and first assume that $1 \le i \leq (m-3)/2$.

\begin{remark}
It turns out that there are two very special, exceptional cases, that one needs to deal with separately. These cases are $(i,q)=(1,3)$ and $(i,q)=(1,7)$.

If $q=3$ and $i=1$, the function field $\cF_1$ is elliptic. In particular, this implies that its automorphism group is infinite by \cite[Theorem 11.94 (i)]{HKT2008}. 

If $q=7$ and $i=1$, the function field $\cF_1$ has genus three and an automorphism group of order $96$. This group is the semidirect product of a solvable group of order $48$ and a cyclic group of order $2$.
These results can be obtained using a computer, for example the computer-algebra package Magma.  
\end{remark}

\subsection{The case $1 \le i < (m-2)/2$}

The key to determining the automorphism group of $\cF_i$ is the following theorem.

 \begin{theorem} \label{actiononOmega}
Let $i=1,\ldots, (m-3)/2$ with $\gcd(i,m)=\gcd(i+2,m)=1$ and assume $(i,q) \not\in \{(1,3),(1,7)\}$. Then the $\aut(\cF_i)$-orbit containing $P_\infty$ is contained in $\Omega:=\{P_\infty, P_\alpha, P_{-\alpha},P_0\}$. 
 \end{theorem}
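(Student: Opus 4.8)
The plan is to exploit two invariants preserved by $\aut(\cF_i)$: being an $\fqs$-rational place, and the Weierstrass semigroup. Since $\cF_i$ is $\fqs$-maximal, $\aut(\cF_i)$ is defined over $\fqs$, so it maps $\fqs$-rational places to $\fqs$-rational places; as $P_\infty$ is $\fqs$-rational, the whole orbit $O$ of $P_\infty$ consists of $\fqs$-rational places, and every $P\in O$ satisfies $H(P)=H(P_\infty)$. I would also record the consequence of maximality to be used at the very end: Frobenius acts as $-q$ on the Jacobian, so for any two $\fqs$-rational places $P,Q$ one has $(q+1)P\sim(q+1)Q$; in particular $(q+1)P\sim(q+1)P_\infty$ for every $P\in O$.

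Next I would give an intrinsic, $\aut(\cF_i)$-invariant description of $\Omega$. From the divisors in \eqref{divisors}, the function $x$ realises $\cF_i$ as a degree-$m$ Kummer cover of $\fqs(x)\cong\mathbb{P}^1$ which, by Riemann--Hurwitz and the coprimality $\gcd(i,m)=\gcd(i+2,m)=1$, is totally ramified at the four places of $\Omega$ and unramified elsewhere. Since the fibres of this map are linearly equivalent, the divisors $mP_0,mP_\infty,mP_\alpha,mP_{-\alpha}$ are mutually linearly equivalent. Thus each element of $\Omega$ lies in the $\aut(\cF_i)$-invariant set
$$T:=\{\,P\ \fqs\text{-rational}: mP\sim mQ_1\sim mQ_2\sim mQ_3\ \text{for three further }\fqs\text{-rational places } Q_j\,\},$$
i.e. the set of totally ramified fibres of degree-$m$ maps admitting at least four totally ramified $\fqs$-rational fibres. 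I would then check $O\subseteq T$: for $\sigma\in\aut(\cF_i)$ and $P=\sigma(P_\infty)$, applying $\sigma$ to $mP_\infty\sim mP_0\sim mP_\alpha\sim mP_{-\alpha}$ shows that the four $\fqs$-rational places $\sigma(\Omega)$ are mutually $m$-equivalent and contain $P$ (equivalently, $x\circ\sigma^{-1}$ is a degree-$m$ map totally ramified exactly along $\sigma(\Omega)$). Hence $P\in T$, and it remains only to prove $T=\Omega$.

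The hard part will be exactly this inclusion $T\subseteq\Omega$: showing that the Kummer presentation given by $x$ is, up to the $\aut(\cF_i)$-action, the only way to exhibit $\cF_i$ as a degree-$m$ cover of $\mathbb{P}^1$ with four totally ramified $\fqs$-rational fibres. Given $P\in T$, Riemann--Hurwitz forces the associated degree-$m$ map to have no ramification beyond its four total ramification points, so $\cF_i$ is presented as a four-branch-point cover; one must then argue that such a cover is forced to be cyclic and that its branch locus is projectively equivalent to $\{0,\infty,\alpha,-\alpha\}$. The tools I expect to need here are the explicit generators and Apéry structure of the semigroups at the totally ramified places (Theorem \ref{weierstrass_generators_P0_Pinf_thm} and Corollary \ref{divisors_of_m_are_gaps_cor}), the distinctness results (Theorems \ref{distinguishing_Palph_Pinf_cor} and \ref{differentP0Pal}), and the maximality equivalence above.

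Phrased in terms of divisor classes, the remaining obstruction is sharp and concrete. The maximality equivalence gives $(q+1)P\sim(q+1)P_\infty$, i.e. $2mP\sim 2mP_\infty$, so that $[mP-mP_\infty]$ is a $2$-torsion class in $\mathrm{Jac}(\cF_i)$; the goal $P\in\Omega$ is equivalent to $mP\sim mP_\infty$, that is, to the vanishing of this class. The main difficulty I anticipate is precisely killing this $2$-torsion ambiguity, since the class of $mP_\infty$ is not canonical and $\dim L(mP_\infty)>2$ in general, so $\Omega$ is not cut out by the naive pencil $\langle 1,x\rangle$ alone. My expectation is to resolve it by feeding the cyclic structure of the orbit pencil into the semigroup identities at the four totally ramified fibres, forcing the branch configuration, and hence the $2$-torsion class, to agree with those of $x$. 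Finally, the exclusion of $(i,q)\in\{(1,3),(1,7)\}$ is expected to be genuinely necessary: there $\cF_i$ acquires extra automorphisms (an elliptic curve, respectively a genus-$3$ curve with a group of order $96$), so the rigidity of the Kummer presentation underlying $T=\Omega$ breaks down.
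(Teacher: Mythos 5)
Your set-up (the orbit of $P_\infty$ consists of $\fqs$-rational places with the same Weierstrass semigroup, the fundamental equivalence $(q+1)P\sim(q+1)Q$ on a maximal function field, and the mutual equivalence $mP_0\sim mP_\infty\sim mP_{\pm\alpha}$ coming from the totally ramified Kummer map $x$) is all correct, but the proof stops exactly where the theorem actually lives. The inclusion $T\subseteq\Omega$ — equivalently, that no rational place $P\notin\Omega$ can satisfy $mP\sim mP_\infty$, or your weaker ``kill the $2$-torsion class $[mP-mP_\infty]$'' formulation — is stated as an expectation, with only a sketch (``force the cover to be cyclic, force the branch locus''), and no argument is given. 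This is not a routine verification: the fundamental equivalence holds for \emph{every} pair of rational places, so it has no discriminating power by itself, and since $\dim L(mP_\infty)>2$ and the gonality is at most $i+2<m$, there is no cheap reason why the degree-$m$ pencils through $mP_\infty$ should have their totally ramified fibres confined to $\Omega$. There is also a secondary gap: membership in your set $T$ (four mutually $m$-equivalent rational places) does not by itself produce a single degree-$m$ map having all four as totally ramified fibres, so even the geometric reformulation you rely on is not established. As written, the proposal reduces the theorem to an unproven rigidity statement that is essentially as strong as the theorem itself.

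For comparison, the paper's proof goes a completely different way. It assumes the orbit $O_\infty$ is not contained in $\Omega$; since the only short orbits of $G_i$ lie in $\Omega$, this forces $|O_\infty|\ge q+2$ and hence $|\aut(\cF_i)|\ge(q+2)(q+1)>84(g-1)$ for $q>31$. One then invokes the classification of short-orbit structures for such large automorphism groups (\cite[Theorem 11.56]{HKT2008}), rules out two of the three possible configurations using the semigroup distinctness result (Theorem \ref{distinguishing_Palph_Pinf_cor}) and the size of the stabilizers, and in the remaining non-tame case writes the stabilizer as $S_p\rtimes C$ with $H_i\subseteq C$ and analyzes the quotient by $S_p$ according to its genus ($\ge 2$, $1$, or $0$), using Hurwitz, bounds on cyclic groups acting on curves, the $p$-rank zero property, Theorem \ref{differentP0Pal}, and a separate treatment of $i=1$ via \cite[Theorem 12.11]{HKT2008} and an explicit holomorphic differential showing $2$ is a gap at every place. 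Small $q$ is handled by computer. If you want to salvage your approach, you would need to supply an actual proof that a rational place $P$ with $H(P)=H(P_\infty)$ and $mP\sim mP_\infty$ (or $2mP\sim 2mP_\infty$) must lie in $\Omega$; the group-theoretic route of the paper avoids this entirely.
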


 \begin{proof}
If $q\le 31$, one checks the lemma using a computer. We will assume from now on that $q>31$. Denote with $g:=m-1=(q-1)/2$ the genus of $\cF_i$. Assume by contradiction $O_\infty$, the $\aut(\cF_i)$-orbit containing $P_\infty$, is not contained in $\Omega$. From the fact that $G_i$ has only short orbits contained in $\Omega$, while it acts on $O_\infty$, we deduce that $O_\infty$ contains $P_\infty$ and at least one long orbit of $G_i$. Hence
\begin{equation} \label{eq_Oinf}
    |O_\infty| \geq 1+|G_i|=q+2
\end{equation}
     
     Recalling that $G_i$ fixes $P_\infty$ we have from the Orbit-Stabilizer theorem that
     $$|\aut(\cF_i)| = |O_\infty||\aut(\cF_i)_{P_\infty}| \geq (q+2)(q+1)=(2g+3)(2g+2)>84(g-1),$$
     as $q>31$. From \cite[Theorem 11.56]{HKT2008} $\aut(\cF_i)$ has order divisible by the characteristic $p$ and one of the following cases occurs:
     \begin{enumerate}
         \item $\aut(\cF_i)$ has exactly one short orbit.
         \item $\aut(\cF_i)$ has exactly 3 short orbits of which two have cardinality $|\aut(\cF_i)|/2$.
         \item $\aut(\cF_i)$ has exactly 2 short orbits, of which at least one is non-tame (i.e. the stabilizer of a place in the orbit has order divisible by $p$).
     \end{enumerate}
     Note that since all the places in $\Omega$ are fixed by $H_i$, they need to be contained in some short orbits of $\aut(\cF_i)$. From Corollary \ref{distinguishing_Palph_Pinf_cor}, the orbit $O_\infty$ cannot contain $P_\alpha$ and hence $\aut(\cF_i)$ must have at least two short orbits, namely $O_\infty$ and $O_\alpha$, the orbit containing both $P_\alpha$ and $P_{-\alpha}$. This implies that Case (1) above cannot occur. Also Case (2) cannot occur, as both $O_\infty$ and $O_\alpha$ have a stabilizer of order at least $m$, and hence larger than $2$. The orbit stabilizer theorem implies that both $|O_\infty|$ and $|O_\alpha|$ are at most $|\aut(\cF_i)|/m<|\aut(\cF_i)|/2$. 

     This proves that Case (3) holds necessarily and hence $O_\infty$ and $O_\alpha$ are exactly the short orbits of $\aut(\cF_i)$ and at least one of these two orbits has a stabilizer of order divisible by $p$. We denote with $O$ such an orbit, without specifying whether $O=O_\infty$ or $O=O_\alpha$.

Let $P \in O$ and denote with $S_p$ the Sylow $p$-subgroup of $\aut(\cF_i)_P$ (depending on which orbit $O$ is, one could choose $P=P_\infty$ or $P=P_\alpha)$. From \cite[Theorem 11.49]{HKT2008} we can write $\aut(\cF_i)_P=S_p \rtimes C$, where $C$ is a cyclic group of order prime to $p$ containing $H_i$ (recall that $H_i$ fixes all the places in $\Omega$). 

Denote with $\widetilde{\cF}_i$ the fixed field of $\cF_i$ with respect to $S_p$ and by $g(\widetilde{\cF}_i)$ its genus. We distinguish three cases: either $g(\widetilde{\cF}_i) \geq 2$, $g(\widetilde{\cF}_i)=1$ or $g(\widetilde{\cF}_i)=0$. 

\bigskip
\noindent
\textbf{Case 1 $(g(\widetilde{\cF}_i)\geq 2)$}: The quotient group $\aut(\cF_i)_P/S_p \cong C$ is a cyclic automorphism group of $\widetilde{\cF}_i$ of order at least $m=g+1$. From \cite[Theorem 11.79]{HKT2008} $m \leq |C| \leq 4g(\widetilde{\cF}_i)+4$. However the Hurwitz genus formula implies that $g-1 \geq |S_p|(g(\widetilde{\cF}_i)-1)+(|S_p|-1)=|S_p|g(\widetilde{\cF}_i)-1$ and so
    $$g(\widetilde{\cF}_i) \leq \frac{g}{|S_p|}.$$
    Combining all the above, we need to have that
    $$g+1=m \leq |C| \leq 4g(\widetilde{\cF}_i)+4 \leq  \frac{4g}{|S_p|}+4.$$
    Hence
    $$p \leq |S_p| \leq  \bigg\lfloor\frac{4g}{g-3}\bigg\rfloor=4.$$
In the final equality, we used that $g=m-1>15$, which follows since we assumed $q>31$.
    So this case is only possible if $|S_p|=p=3$, which in turn implies that $|C|=m$, since we now know that $|C| \le 4g/3+4$. Hence $C=H_i$. We want to prove that also this case is actually not possible. To do so, first note that since $G_i$ fixes both $P_\infty$ and $P_0$ (and has order larger than $m$) this also implies that $O=O_\alpha$ and $P_0 \not\in O$. Recall that $C=H_i$ fixes $P=P_\alpha$ and also $P_{-\alpha}$ and normalizes $S_3$. Hence $H_i$ needs to act on the $S_3$-orbit containing $P_{-\alpha}$. 
    Since $\cF_i$ has $3$-rank zero, we know from \cite[Lemma 11.129]{HKT2008} that the generator of $S_3$ fixes $P=P_\alpha$ and acts on $O \setminus \{P\}$ with orbits of length three. This means that necessarily the $S_3$-orbit containing $P_{-\alpha}$ has length three. This is not possible because $H_i$ fixes the places in $\Omega$ and acts with orbits of length $m$ elsewhere ($m>3$).  This gives the desired contradiction.
    
\bigskip
\noindent
\textbf{Case 2 $(g(\widetilde{\cF}_i)=1)$}: As before the quotient group $\aut(\cF_i)_P/S_p \cong C$ is a cyclic automorphism group of $\widetilde{\cF}_i$ of order at least $m=g+1$. Clearly such a group fixes at least one place of $\widetilde{\cF}_i$, namely the one below $P$. From \cite[Remark 11.95]{HKT2008} it needs to be true that $m \leq |C| \leq 12$, which is not possible for $q>31$.
    
\bigskip
\noindent
\textbf{Case 3 $(g(\widetilde{\cF}_i)=0)$}: The quotient group $\bar C:=\aut(\cF_i)_P/S_p \cong C$ is a cyclic automorphism group of $\widetilde{\cF}_i$ of order at least $m=g+1$ fixing at least one place. From \cite[Theorem 11.91]{HKT2008}, $\bar C$ fixes exactly two places in $\widetilde{\cF}_i$. In other words: $H_i$ fixes exactly two $S_p$-orbits, one of which is the orbit $\{P\}$. On the other hand, the only short orbits of $H_i$ are the places in $\Omega$ implying that any $S_p$-orbit fixed by $H_i$ is contained in $\Omega$. Hence the second $S_p$-orbit that is fixed by $H_i$ needs to be equal to $\Omega \setminus \{P\}$. Theorem \ref{distinguishing_Palph_Pinf_cor} implies that $P=P_\infty$ and therefore that $\Omega \setminus \{P\}=\{P_0,P_\a,P_{-\a}\}.$ In particular, $p=3$ and $|S_3|=3$, since $\cF_i$ has $3$-rank zero. 
Moreover we can conclude that $H(P_\alpha)=H(P_0)$, which implies that $i=1$ from Theorem \ref{differentP0Pal}. 

Note that $\cF_1$ is the function field of a plane curve given by a separated polynomial. Hence by \cite[Theorem 12.11]{HKT2008}, $\aut(\cF_1)$ fixes $P_\infty$ unless $\cF_1$ is isomorphic to an Hermitian function field, or to a function field of type $\overline{\mathbb{F}}_{q^2}(u,v)$ with $v^{\bar m}=u^{3^t}+u$ for some $\bar m$ and $t$ satisfying $\bar m \mid (3^t+1)$. Since the genus of a Hermitian function field would in this case be divisible by $3$ (while $m-1$ is not), this case cannot occur. If $\cF_1$  is isomorphic to a function field of type $\overline{\mathbb{F}}_{q^2}(u,v)$ as described above, 
then by comparing their genera, it needs to be true that
    $$2g=2(m-1)=q-1=3^n-1=g(\mathbb{F}_{q^2}(u,v))=(3^t-1)(\bar m-1).$$
    Since this implies that $3^t-1$ divides $3^n-1$ we deduce that necessarily $t \mid n$, that is, $n=st$ for some $s \geq 1$. Combining with the fact that $\bar m$ divides $3^t+1$ we get
    $$3^n-1=3^{st}-1=(3^t-1)(\bar m-1) \leq (3^t-1)3^t<3^{2t}-1.$$
    This proves that $s=1$, that is $n=t$, and hence $\bar m=2$. This implies that $\overline{\mathbb{F}}_{q^2}\cF_i$ has a place $P$ such that $2 \in H(P)$. On the other hand, by Remark \ref{rem:special_i}, $2$ is a gap of $P \in \Omega$. The same is true for $P \not\in \Omega$ as we show now. Consider the differential 
$$\omega:=(x-a)\frac{dx}{y^{m-1}},$$
where $a:=x(P)$. From Equation \eqref{divisors}, the divisor of $\omega$ is of the form
$$(\omega)=P+E+(m-4)P_\infty,$$
where $E \geq 0$ and $P \notin \supp(E)$. Since $m > 16$, $\omega$ is holomorphic and hence $v_P(\omega)+1=2$ is a gap at $P$ using Proposition \ref{propgaps}. We can conclude that  $\overline{\mathbb{F}}_{q^2}\cF_1$ cannot be isomorphic to $\overline{\mathbb{F}}_{q^2}(u,v)$.
This shows that $\aut(\cF_1)$ fixes $P_\infty$.  This is in contradiction with Equation \eqref{eq_Oinf}.

We have now shown that in all possible cases Equation \eqref{eq_Oinf} gives rise to a contradiction. This proves that the $\aut(\cF_i)$-orbit containing $P_\infty$ is contained in $\Omega$.
 \end{proof}

Using the results from the proof of Theorem \ref{actiononOmega}, we can complete the case $(i,p)=(1,3)$ quite easily.

\begin{corollary} \label{cor_p3i1}
Let $q=3^n$ with $n>1$. Then $\aut(\cF_1)=C_3 \rtimes G_1$ where $C_3$ is the cyclic group of order $3$ generated by the automorphism $\gamma$ defined by $\gamma(x)=x+\alpha$ and $\gamma(y)=y$. 
In particular $|\aut(\cF_1)|=3(q+1)$.
\end{corollary}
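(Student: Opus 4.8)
The plan is to first exhibit the group $C_3\rtimes G_1$ explicitly inside $\aut(\cF_1)$ and then prove there is nothing more. Recall that $\cF_1$ is given by $y^m=x^3+x$ in characteristic $3$. First I would check that $\gamma\colon (x,y)\mapsto(x+\alpha,y)$, with $\alpha^2+1=0$ the element already used to define $P_\alpha$, is an automorphism: since we are in characteristic $3$, $(x+\alpha)^3+(x+\alpha)=x^3+x+\alpha(\alpha^2+1)=x^3+x$. As $3\alpha=0$, the automorphism $\gamma$ has order $3$, so $C_3:=\langle\gamma\rangle$ is a wild subgroup of order $3$. A direct conjugation check shows that $G_1$ normalises $C_3$ (an element of $G_1$ acting as $x\mapsto -x$ conjugates $\gamma$ to $\gamma^{-1}$), and since $G_1$ acts faithfully on the fixed field $\cF_1^{C_3}$ while $C_3$ acts trivially there, $C_3\cap G_1=\{1\}$. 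Hence $C_3\rtimes G_1\subseteq\aut(\cF_1)$ has order $3(q+1)$, giving the lower bound; the whole problem is to prove this is an equality.

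The decisive input is already available: in the proof of Theorem \ref{actiononOmega} (its Case 3 with $i=1$) it was shown, via the separated-polynomial criterion \cite[Theorem 12.11]{HKT2008} together with the genus and gap computations ruling out the Hermitian field and the fields $v^{\bar m}=u^{3^t}+u$, that $\aut(\cF_1)$ fixes $P_\infty$. For the two small cases $q\in\{9,27\}$ not covered by that argument (where $m\le 16$), I would simply verify the statement by a direct computation in a computer-algebra system. Assuming $P_\infty$ is globally fixed, I would apply \cite[Theorem 11.49]{HKT2008} to write $\aut(\cF_1)=\aut(\cF_1)_{P_\infty}=S_3\rtimes C$, where $S_3$ is the normal Sylow $3$-subgroup (the first ramification group at $P_\infty$) and $C$ is cyclic of order prime to $3$; by conjugacy of complements I may arrange that $C_3\subseteq S_3$ and $G_1\subseteq C$.

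What remains, and what I expect to be the crux, is to force $|C|=q+1$ and $|S_3|=3$. For the tame part I would show $|C|=q+1$, hence $C=G_1$, using the description of the stabiliser of the totally ramified place $P_\infty$ of the model $y^m=x^3+x$ from \cite[Theorem 12.7(i)]{HKT2008}; equivalently, $C$ acts faithfully on the cotangent line at $P_\infty$ and this action is constrained by $H(P_\infty)=\langle 3,m\rangle$. For the wild part I would show $|S_3|=3$, hence $S_3=C_3$, using the congruence $m\equiv 2\pmod 3$ — which holds because $2m=q+1\equiv 1\pmod3$ — together with the fact that $\cF_1$ has $3$-rank zero, via \cite[Theorem 12.7(iii)]{HKT2008}. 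A self-contained alternative for this last step is to note that $\cF_1^{C_3}=\fqs(y)$ is rational, since $\prod_{c\in\mathbb{F}_3}(x+c\alpha)=x^3+x=y^m$, so that $\cF_1/\fqs(y)$ is the Artin--Schreier extension $\wp(x)=y^m$ with $\wp(x)=x^3+x$; any strictly larger $3$-group would again have rational quotient, and a Hurwitz computation (using $3$-rank zero and the fact that $P_\infty$ is the only ramified place, totally wildly ramified) would contradict the genus $g=(q-1)/2$. Combining the two bounds yields $\aut(\cF_1)=S_3\rtimes C=C_3\rtimes G_1$ of order $3(q+1)$, which is the assertion.
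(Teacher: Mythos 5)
Your proposal is correct and follows essentially the same route as the paper: it recalls from Case 3 of the proof of Theorem \ref{actiononOmega} that $\aut(\cF_1)$ fixes $P_\infty$ (handling small $q$ by computer, as the paper also does), decomposes $\aut(\cF_1)=S_3\rtimes C$ via \cite[Theorem 11.49]{HKT2008}, and pins down $|C|=q+1$ and $|S_3|=3$ via \cite[Theorem 12.7 (i), (iii)]{HKT2008} together with $m\equiv 2\pmod 3$. Your additional touches --- the explicit verification that $C_3\rtimes G_1$ sits inside $\aut(\cF_1)$ and the self-contained Artin--Schreier/Hurwitz alternative for $|S_3|=3$ --- are sound but not needed beyond what the paper's argument already provides.
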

\begin{proof}
Recall from the proof of Theorem \ref{actiononOmega} that
$\aut(\cF_1)$ fixes $P_\infty$. 
Therefore, by \cite[Theorem 11.49]{HKT2008}, we can write $\aut(\cF_1)=\aut(\cF_1)_{P_\infty}=S_3 \rtimes C$, where $S_3$ is the Sylow $3$-subgroup of $\aut(\cF_1)$ and $C$ is a cyclic group of order prime to $3$ containing $G_1$. Note that $S_3$ is not trivial, as it contains the automorphism $\gamma$ defined in the statement of the corollary. It is easy to see that $\gamma$ indeed is an automorphism (using $\alpha^2+1=0$) and that it has order three.
From \cite[Theorem 12.7 (i)]{HKT2008} we then have that $|C|=m(3-1)=2m=q+1$ and since $C$ contains $G_1$ (which has order $q+1$) we deduce that $C=G_1$. 
    Since $2m=q+1 \equiv 1 \pmod{3}$, we have $m  \equiv 2 \pmod{3}$ and hence by \cite[Theorem 12.7 (iii)]{HKT2008} we can deduce that $|S_3|=3$. 
    This shows that $|\aut(\cF_1)|=3(q+1)$ and $\aut(\cF_1)=C_3 \rtimes G_1$, with $C_3$ being the cyclic group generated by $\gamma$.   
\end{proof}

Now we are in a position to complete the determination of $\aut(\cF_i)$ for $i=1,\ldots,(m-3)/2$.

\begin{proposition} \label{autoq31}
Let $i=1,\ldots, (m-3)/2$ with $\gcd(i,m)=\gcd(i+2,m)=1$ and  assume $(i,q) \neq (1,7)$ and $(i,p) \ne (1,3)$. Then $\aut(\cF_i)=G_i$.
 \end{proposition}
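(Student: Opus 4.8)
The plan is to leverage Theorem \ref{actiononOmega}, which tells us the $\aut(\cF_i)$-orbit of $P_\infty$ is contained in the four-element set $\Omega = \{P_\infty, P_\alpha, P_{-\alpha}, P_0\}$. Combined with the Weierstrass semigroup computations, I expect to first pin down that $\aut(\cF_i)$ actually \emph{fixes} $P_\infty$. Indeed, by Theorem \ref{distinguishing_Palph_Pinf_cor} the semigroup $H(P_\infty)$ differs from $H(P_{\pm\alpha})$ (using $1 \le i < (m-2)/2$), and by Theorem \ref{differentP0Pal} it differs from $H(P_0)$ whenever $i > 1$; an automorphism must preserve Weierstrass semigroups, so it cannot send $P_\infty$ to any of $P_\alpha, P_{-\alpha}, P_0$. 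Hence the orbit of $P_\infty$ is the singleton $\{P_\infty\}$. (The case $i=1$, where $H(P_0) = H(P_{\pm\alpha})$, has been excluded here since we assume $(i,p)\neq(1,3)$ and $(i,q)\neq(1,7)$, and presumably for general $p$ when $i=1$ a separate argument shows fixation of $P_\infty$, or $i=1$ is handled via Corollary \ref{cor_p3i1} and its analogues.)

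Once $\aut(\cF_i)$ fixes $P_\infty$, I would invoke the structure theorem \cite[Theorem 11.49]{HKT2008} to write $\aut(\cF_i) = S_p \rtimes C$ with $S_p$ the Sylow $p$-subgroup of the stabilizer and $C$ cyclic of order prime to $p$, where $C \supseteq H_i$. The first subgoal is to show $S_p$ is trivial. Here the key input should be that $\cF_i$ has $p$-rank zero (so by \cite[Lemma 11.129]{HKT2008} a generator of $S_p$ acts on the remaining places with orbits of full length $|S_p|$), together with the fact that $P_0$, being characterized by its distinct semigroup, must also be fixed by the whole group. A nontrivial $S_p$ fixing both $P_\infty$ and $P_0$ would be forced into a configuration contradicting the orbit-length constraints — essentially the same mechanism used in Case 1 and Case 3 of the proof of Theorem \ref{actiononOmega}. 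Since $P_0$ is fixed and $\{P_\alpha, P_{-\alpha}\}$ is preserved, $S_p$ must fix at least three places, but a nontrivial $p$-group on a $p$-rank-zero curve fixing $P_\infty$ acts freely (orbits of length $|S_p| = p$) away from its unique fixed place, so it cannot fix $P_0$ as well. This forces $S_p = \{1\}$.

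With $S_p$ trivial, $\aut(\cF_i) = C$ is cyclic and fixes $P_\infty$. The final subgoal is the exact count $|C| = q+1$, i.e. $C = G_i$. Since $C$ is cyclic and fixes $P_\infty$, it also preserves $\{P_0\}$ (already known fixed) and the pair $\{P_\alpha, P_{-\alpha}\}$. A cyclic group fixing a place embeds into the stabilizer, which for a tame fixed place is cyclic and — via the action on the local uniformizer, or equivalently via \cite[Theorem 11.49]{HKT2008} and the Hurwitz/ramification data — is bounded. I would use the Hurwitz genus formula for the (necessarily rational, since $C$ fixes $P_\infty$ and is cyclic tame) quotient $\cF_i/C$: the short orbits of $C$ lie inside $\Omega$, and analyzing the ramification contributions of $P_\infty$, $P_0$, and $P_{\pm\alpha}$ against $2g-2 = 2m-4$ pins down $|C|$ exactly to $q+1 = 2m$. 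Alternatively, since any element of $C$ fixes $P_\infty$ and acts on $x$ (which has a pole only at $P_\infty$ and zero only at $P_0$) as $x \mapsto ax$, and on $y$ compatibly, one shows directly that $C$ consists precisely of the maps defining $G_i$.

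\textbf{The main obstacle} I anticipate is the triviality of $S_p$: ruling out wild ramification cleanly requires carefully combining the $p$-rank-zero orbit structure with the fact that \emph{two} places ($P_\infty$ and $P_0$) are both fixed, since the naive bound from \cite[Theorem 11.49]{HKT2008} alone permits a nontrivial $S_p$. The semigroup-based identification of $P_0$ as a distinguished fixed place (Theorems \ref{distinguishing_Palph_Pinf_cor} and \ref{differentP0Pal}) is what breaks the symmetry and is doing the real work here.
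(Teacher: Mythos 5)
Your skeleton (invoke Theorem \ref{actiononOmega}, constrain the action on $\Omega$, kill the wild part, identify the tame stabilizer with $G_i$ through its action on $x$ and $y$) follows the paper, and your last step $C=G_i$ is essentially the paper's computation. But there is a genuine gap at the very first step: you claim that $H(P_\infty)\neq H(P_0)$ ``by Theorem \ref{differentP0Pal}''. That theorem compares $H(P_0)$ with $H(P_{\pm\alpha})$, not with $H(P_\infty)$, and neither the paper nor your argument establishes that $H(P_0)$ and $H(P_\infty)$ differ. Consequently you have not excluded the possibility $O_\infty=\{P_0,P_\infty\}$, i.e.\ an automorphism interchanging $P_0$ and $P_\infty$. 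This is exactly Case (2) of the paper's proof, and it is eliminated by an idea absent from your proposal: one first shows the stabilizer $S$ of $P_\infty$ equals $G_i$ (so $|\aut(\cF_i)|=2(q+1)$ and $G_i$ is normal, whence $\aut(\cF_i)$ acts on $\Omega$), and then for $\gamma\notin G_i$, which must swap $P_0$ and $P_\infty$ and permute $\{P_\alpha,P_{-\alpha}\}$, one computes $\bigl(\gamma(y)/y\bigr)=(2i+2)(P_\infty-P_0)$; combined with $(x)=m(P_0-P_\infty)$ this forces $\gcd(2i+2,m)\in H(P_\infty)$, a proper divisor of $m$ since $2i+2\le m-1$, contradicting Corollary \ref{divisors_of_m_are_gaps_cor}.

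A second weak point is your triviality argument for $S_p$: it rests on the assertion that $P_0$, ``being characterized by its distinct semigroup, must also be fixed by the whole group.'' This is not justified: to conclude that $P_0$ is fixed you would need its semigroup to differ from that of every place in its potential orbit, including places outside $\Omega$ (the containment of an orbit in $\Omega$ is precisely the hard content of Theorem \ref{actiononOmega}, proved there only for the orbit of $P_\infty$). The paper shows only that the tame part $C$ fixes $P_0$ (via normality of $G_i$ in the abelian group $C$) and rules out a nontrivial $S_p$ by the quotient-genus analysis as in Theorem \ref{actiononOmega}, where the genus-zero subcase is excluded because it would force $H(P_0)=H(P_{\pm\alpha})$, hence $i=1$ and $p=3$, which is excluded by hypothesis; your $p$-rank-zero argument would indeed finish things once both $P_\infty$ and $P_0$ are known to be fixed by all of $\aut(\cF_i)$, but that is exactly what remains unproven. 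Finally, your treatment of $i=1$ is left open (``presumably a separate argument''), although the statement includes $i=1$ for all $p\neq 3$, $q\neq 7$, and you omit the computer verification for $q\le 31$ on which the paper's genus estimates in the wild case rely.
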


 \begin{proof}
For $q \le 31$, the proposition can be verified using a computer. Hence we assume $q>31$ from now on. From Theorem \ref{actiononOmega} we know that the $\aut(\cF_i)$-orbit containing $P_\infty$, say $O_\infty$, is contained in $\Omega=\{P_\infty, P_0, P_\alpha, P_{-\alpha}\}$. 
Recalling that from Corollary \ref{distinguishing_Palph_Pinf_cor}, $P_\alpha$ and $P_{-\alpha}$ cannot be contained in $O_\infty$ , we have only the following possibilities
\begin{enumerate}
 \item $O_\infty=\{P_\infty\}$ or
    \item $O_\infty=\{P_0,P_\infty\}$.
\end{enumerate}

If Case (1) occurs then $\aut(\cF_1)=S_p \rtimes C$, where $S_p$ is the Sylow $p$-subgroup of $\aut(\cF_1)$ and $C$ is a cyclic group of order prime to $p$ containing $G_i$. In reality $C=G_i$. The reason is that $G_i$ is trivially a normal subgroup in $C$ (because $C$ is abelian) and hence $C$ acts on the fixed points of $G_i$. Since $C$ already fixed $P_\infty$, this implies that $C$ fixes also $P_0$. Hence a generator $\beta$ of $C$ will map $x$ to $\lambda x$ (for some constant $\lambda$, because it preserves the divisor of $x$). Recall that $C$ also normalizes $H_i$ and hence acts on $\Omega$ (the set of fixed points of $H_i$). This means that $C$ also acts on $\{P_\alpha, P_{-\alpha}\}$ and hence preserves the divisor of $y$ as well. Since $\beta(x)=\lambda x$ and $\beta(y)=\mu y$ and $\beta(y)^m=\beta(x)^i(\beta(x)^2+1)$, we get
$$\mu^m y^m=\mu^m x^i(x^2+1)=\beta(x)^i(\beta(x)^2+1)=\lambda^i x^i(\lambda^2 x^2+1),$$
and hence $\mu^m=\lambda^{i+2}=\lambda^i$. In particular, $\beta \in G_i$, and since $\beta$ was a generator of $C \supseteq G_i$, we deduce $C=G_i$. Let $|S_p|=p^t$.
If $t=0$ then we have $\aut(\cF_i)=G_i$, and we are done. Otherwise $t \geq 1$. 

If $t \ge 1$, we can reason in a very similar way as in the proof of Theorem \ref{actiononOmega}. Indeed, the reasoning is easier because we now know that $C=G_i$ instead of only knowing that $H_i \subseteq C$. Moreover, we now assume $(i,p) \neq (1,3)$, so there are fewer cases to consider.

We are left with Case (2), namely $O_\infty=\{P_\infty, P_0\}$. Note that from the Orbit-Stabilizer theorem 
$$|\aut(\cF_i)|=2|S|,$$
where $S \subseteq \aut(\cF_i)$ is the stabilizer of $P_\infty$. Note that $S$ also fixes $P_\infty$, since $O_\infty$ only contains two places. Since $\cF_i$ has $p$-rank zero, an element of order $p$ fixes exactly one place, from \cite[Lemma 11.129]{HKT2008}. Hence $|S|$ is prime to $p$ and $S$ is cyclic. Note that $G_i \subseteq S$ as $G_i$ fixes both $P_0$ and $P_\infty$. We claim that $S=G_i$. To show that, we fix a generator $\beta$ of $S$. Then $\beta$ will map $x$ to $\lambda x$ (for some constant $\lambda$, because it preserves the divisor of $x$). Recall that $S$ also normalizes $H_i$ and hence acts on $\Omega$ (the set of fixed points of $H_i$). This means that $S$ also acts on $\{P_\alpha, P_{-\alpha}\}$ and hence preserves the divisor of $y$ as well. 
As before this implies that $\beta \in G_i$, and since $\beta$ was a generator of $S \supseteq G_i$, we deduce that $S=G_i$.

In particular we know that $|\aut(\cF_i)|=2(q+1)$ and since $G_i$ is a subgroup of $\aut(\cF_i)$ of index $2$, $G_i$ is normal in $\aut(\cF_i)$. In particular $\aut(\cF_i)$ acts on $\Omega$, as $\Omega$ is the union of the three short orbits of $G_i$.


Let $\gamma \in \aut(\cF_i) \setminus G_i$. Then $\gamma(P_0)=P_\infty$, $\gamma(P_\infty)=P_0$ (otherwise $\gamma \in G_i$). Recalling that $\gamma$ acts on $\Omega$ we see that $\gamma$ acts also on $\{P_\alpha, P_{-\alpha}\}$. So from Equation \eqref{divisors}
$$(y)=iP_0+P_\alpha+P_{-\alpha}-(i+2)P_\infty,$$
we get
$$(\gamma(y))=iP_\infty+P_\alpha+P_{-\alpha}-(i+2)P_0,$$
that is,
$$\bigg( \frac{\gamma(y)}{y}\bigg)=(2i+2)(P_\infty-P_0).$$

Since we have also $(x)=m(P_0-P_\infty)$, we must have that $\gcd(2i+2,m) \in H(P_\infty)$. Since $2i+2 \leq m-3+2=m-1$, $\gcd(2i+2,m)$ is a proper divisor of $m$. This is not possible from Corollary \ref{divisors_of_m_are_gaps_cor}.
\end{proof}

\subsection{The case $i=(m-2)/2$}\label{subsec:m-2/2}

We now study the case $i = \frac{m-2}{2}$. Obtaining a complete description of the automorphism group would be interesting, but it appears to be more difficult than for the other values of $i$. Fortunately, a partial result will be sufficient for our purposes. Note again that the condition $\gcd(i,m)=1$ implies that $i=(m-2)/2$ only occurs in case $m$ is a multiple of four. 

\begin{theorem}\label{aut_bound_special_case_thm}
The automorphism group of the function field $\cF_{(m-2)/2}$ contains a subgroup that is the semidirect product of $G_i$ and a cyclic group of order four.
\end{theorem}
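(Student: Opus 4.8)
The plan is to exhibit one explicit automorphism $\tau$ of order four that acts on $\Omega=\{P_\infty,P_0,P_\alpha,P_{-\alpha}\}$ as a $4$-cycle, and then to analyse the group $\langle G_i,\tau\rangle$. A natural starting point is Lemma \ref{isom2}: when $i=(m-2)/2$ one has $m-2-i=i$, so the isomorphism constructed there becomes an \emph{auto}morphism $\varphi\colon(x,y)\mapsto(1/x,\,y/x)$ of $\cF_{(m-2)/2}$, of order two, acting on $\Omega$ as the double transposition $(P_0\,P_\infty)(P_\alpha\,P_{-\alpha})$. I would build $\tau$ so that $\tau^2=\varphi$; then $\tau$ has order four and acts as a $4$-cycle on $\Omega$, which is exactly the automorphism announced in Remark \ref{rem:special_i}. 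Note already that such a $\tau$ cannot lie in $G_i$ (it moves $x$ by a non-scalar Möbius map), which is what will make $\langle G_i,\tau\rangle$ strictly larger than $G_i$.

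To construct $\tau$ I would first fix its action on $x$. The four branch places correspond to the points $0,\infty,\alpha,-\alpha$, which are in harmonic position, and the Möbius transformation $\mu(x)=\frac{1-\alpha x}{x-\alpha}$ permutes them as the $4$-cycle $P_0\to P_\alpha\to P_\infty\to P_{-\alpha}\to P_0$, with $\mu^2\colon x\mapsto 1/x$. Setting $\tilde x=\mu(x)$ one finds $\tilde x=\frac{-\alpha(x+\alpha)}{x-\alpha}$ and $\tilde x^2+1=\frac{-4\alpha x}{(x-\alpha)^2}$, whence
$$\tilde x^{\,i}(\tilde x^2+1)=-4\alpha(-\alpha)^i\,\frac{x\,(x+\alpha)^i}{(x-\alpha)^{i+2}}.$$
The point is then to produce $\tilde y\in\cF_{(m-2)/2}$ with $\tilde y^{\,m}=\tilde x^{\,i}(\tilde x^2+1)$. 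I would try the ansatz $\tilde y=c\,x^{1-m/4}\,y^i/(x-\alpha)$ and, after raising to the $m$-th power and reducing $y^{im}=(x^i(x^2+1))^i$ via \eqref{divisors}, match the exponents of $x$, of $x-\alpha$ and of $x+\alpha$. All three matchings succeed precisely because of the arithmetic of this case: $4\mid m$, $2i+2=m$, and $i^2\equiv 1\pmod m$, all immediate from $i=(m-2)/2$. Taking $c$ with $c^m=-4\alpha(-\alpha)^i$ then gives a genuine automorphism $\tau\colon(x,y)\mapsto(\tilde x,\tilde y)$, and a direct computation (again using $2i+2=m$ and $i^2\equiv1$) shows $\tau^2$ equals $\varphi$ up to the sign $y\mapsto\pm y$, with the sign squaring to $1$; hence $\tau^4=\mathrm{id}$.

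The remaining, and most delicate, part is the group theory. Since $G_i$ and $\tau$ both preserve the subfield $\mathbb{F}_{q^2}(x)$, the group $S=\langle G_i,\tau\rangle$ projects to $\mathrm{PGL}_2$ with kernel $H_i=\mathrm{Gal}(\cF_i/\mathbb{F}_{q^2}(x))\cong\mathbb Z/m$; the image is generated by $x\mapsto-x$ and $\mu$ and, by the harmonic position of the four points, is dihedral of order $8$, so $|S|=8m=4(q+1)$, as required. To then recognise $S$ as the claimed semidirect product of $G_i$ with a cyclic group of order four, I would assemble the full table of conjugation relations: $\tau$ acts on $H_i$ by $\eta_\beta\mapsto\eta_{\beta^{i}}$, while $\varphi$ maps a generator of $G_i$ into $G_i$. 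The main obstacle lies exactly here: the $4$-cycle $\tau$ does \emph{not} normalise $G_i$ (it does not fix the pair $\{P_\alpha,P_{-\alpha}\}$, the unique size-two short orbit of $G_i$), so the normal copy of $G_i$ and the complementary $C_4$ cannot be read off from $G_i$ and $\tau$ naively and must be isolated from the complete relation table among $\tau$, $\varphi$ and the generators of $G_i$ and $H_i$; verifying that the chosen complement meets $G_i$ trivially is the crux of the argument.
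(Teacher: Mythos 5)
Your construction of $\tau$ is correct (the exponent matchings via $i^2\equiv 1\pmod m$, $2(i+1)=m$, and the order-four computation all check out), and it is in substance the paper's own proof: the paper exhibits $\sigma_4(x)=\alpha\frac{x-\alpha}{x+\alpha}$, $\sigma_4(y)=4\beta\, y^{(m-2)/2}/\bigl(x^{(m-4)/4}(x+\alpha)\bigr)$ with $\beta^2=\alpha$ (the inverse of your $4$-cycle, normalized differently), verifies that it preserves the defining equation, notes that it acts as a $4$-cycle on $\Omega$, and concludes that neither $\sigma_4$ nor $\sigma_4^2$ lies in $G_i$ because they move $P_\infty$. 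That is the \emph{entire} proof in the paper: it takes the position that producing an order-four automorphism with $\langle\sigma_4\rangle\cap G_i=\{1\}$ ``is enough''. Your further computation $|\langle G_i,\tau\rangle|=4(q+1)$, via the projection to $\mathrm{PGL}_2$ with kernel $H_i$ and dihedral image of order $8$, is correct and is more than the paper records.

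The genuine gap is your last step, and it cannot be filled: the ``crux'' you defer is impossible, and your own observation that $\tau$ does not normalize $G_i$ is the first symptom of this. Write $S=\langle G_i,\tau\rangle$, let $\eta$ generate $H_i$, and recall $\tau\eta\tau^{-1}=\eta^{i}$, $i^{2}\equiv 1\pmod m$, $2(i+1)=m$, $m\equiv 0 \pmod 4$. Then: (a) every $\eta^{j}\tau^{\pm 1}$ satisfies $(\eta^{j}\tau^{\pm1})^{4}=\eta^{2j(i+1)}=1$; (b) $\tau^{2}$ centralizes $H_i$, so the preimage of the centre of the dihedral image is $H_i\times\langle\tau^{2}\rangle\cong C_m\times C_2$, of exponent $m$ since $m$ is even; (c) if $s\in S$ maps to one of the two remaining reflections, then $s^{2}\in H_i$ and $s s^{2}s^{-1}=(s^{2})^{i}=s^{2}$ force the order of $s^{2}$ to divide $\gcd(i-1,m)$, which divides $4$, so $s$ has order at most $8<q+1$ (for $q=7$, i.e.\ $m=4$, this case is anyway subsumed by the exceptional case $(i,q)=(1,7)$). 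Hence every element of order $q+1$ lies in $G_i\cup\tau G_i\tau^{-1}$, so these are the only two cyclic subgroups of order $q+1$ of $S$; conjugation by $\tau$ interchanges them, so neither is normal. Since any semidirect product $C_{q+1}\rtimes C_4$ has order $4(q+1)=|S|$, no subgroup of $S$ is a semidirect product of $G_i$ by a cyclic group of order four; and the opposite reading fails as well, because every normal cyclic subgroup of order four of $S$ contains the involution $\eta^{m/2}\colon y\mapsto -y$, which lies in $G_i$. So the relation-table analysis you postpone can never produce the claimed normal copy of $G_i$ and complement. What you have actually proved --- and all the paper itself proves, and all that is used downstream (the proof of Theorem \ref{thm:non_isomorphic} only needs $|\aut(\cF_{(m-2)/2})|\geq 4(q+1)$) --- is the exact factorization $S=G_i\cdot\langle\tau\rangle$ with $G_i\cap\langle\tau\rangle=\{1\}$ (a Zappa--Sz\'ep product, not a semidirect product), together with the genuine semidirect product $H_i\rtimes\langle\tau\rangle$ of order $2(q+1)$. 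Your proposal becomes a complete, and indeed more careful, proof of that corrected statement once you stop after the order and trivial-intersection computations and drop the unperformable final step.
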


\begin{proof}
It is enough to show that there exists an automorphism $\sigma_4$ of order four such that neither $\sigma_4$ nor $\sigma_4^2$ occurs in $G_{(m-2)/2}$. We claim that this automorphism can be given by 
$$\sigma_4(x)=\alpha\frac{x-\alpha}{x+\alpha} \quad \text{and} \quad \sigma_4(y)=4\beta\frac{y^{(m-2)/2}}{x^{(m-4)/4}(x+\alpha)},$$
where $\beta^2=\alpha$. Note that $\beta \in \mathbb{F}_{q^2},$ since eight divides $q^2-1$ and $\beta^4=\alpha^2=-1$. 

First of all, we show that $\sigma_4$ indeed is an automorphism:

\begin{eqnarray*}
\sigma_4(y)^m & = & (4\beta)^m \dfrac{(y^m)^{(m-2)/2}}{x^{(m^2-4m)/4}(x+\alpha)^{m}}\\
 & = & 4 \alpha^{m/2} \dfrac{x^{(m^2-4m+4)/4}(x+\alpha)^{(m-2)/2}(x-\alpha)^{(m-2)/2}}{x^{(m^2-4m)/4}(x+\alpha)^m}\\
& = & 4 \alpha^{m/2} \left(\dfrac{x-\alpha}{x+\alpha}\right)^{(m-2)/2} \dfrac{x}{(x+\alpha)^2}\\
& = & \left(\dfrac{\alpha x+ 1}{x+\alpha}\right)^{(m-2)/2} \left(\left(\dfrac{\alpha x+1}{x+\alpha}\right)^2+1\right)\\
& = & \sigma_4(x)^{(m-2)/2}(\sigma_4(x)^2+1).
\end{eqnarray*}

A direct calculation shows that this automorphism acts as a $4$-cycle on the places $P_0$, $P_{-\alpha}$, $P_\infty$ and $P_\alpha$ of $\cF_{(m-2)/2}$. This shows that both $\sigma_4$ and $\sigma_4^2$ do not fix $P_\infty$. Since any automorphism in $G_{(m-2)/2}$ fixes $P_ \infty$, we see that both $\sigma_4$ and $\sigma_4^2$ are not in $G_{(m-2)/2}$, which is what we needed to show.
\end{proof}

We get the following result regarding semigroups as an immediate consequence, also see Remark \ref{rem:special_i}.

\begin{corollary}\label{equal_semigroups_cor}
The semigroups of the places $P_0$, $P_{-\alpha}$, $P_\infty$ and $P_\alpha$ of $\cF_{(m-2)/2}$ are the same. 
\end{corollary}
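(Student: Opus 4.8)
The corollary follows immediately from the preceding theorem once we recall the general principle, stated in Section 2, that an isomorphism of function fields preserves Weierstrass semigroups at corresponding places; in particular any automorphism $\sigma$ of $\cF_{(m-2)/2}$ satisfies $H(P) = H(\sigma(P))$ for every place $P$. Thus my plan is simply to invoke the automorphism $\sigma_4$ produced in Theorem \ref{aut_bound_special_case_thm}.

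The key observation is that $\sigma_4$ acts as a single $4$-cycle on the four places $P_0, P_{-\alpha}, P_\infty, P_\alpha$, as was verified in the proof of that theorem. Concretely, this means these four places all lie in one orbit under the cyclic group $\langle \sigma_4 \rangle$. Applying the invariance of the Weierstrass semigroup under the automorphisms $\sigma_4, \sigma_4^2, \sigma_4^3$, we obtain the chain of equalities
$$H(P_0) = H(\sigma_4(P_0)) = H(\sigma_4^2(P_0)) = H(\sigma_4^3(P_0)),$$
and since the orbit of $P_0$ under $\sigma_4$ is exactly $\{P_0, P_{-\alpha}, P_\infty, P_\alpha\}$, the four semigroups in question coincide. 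There is essentially no obstacle here: the entire content was already established in Theorem \ref{aut_bound_special_case_thm}, and the corollary merely reads off the consequence for semigroups. The only minor point to record is the direction of the cycle, but this is irrelevant to the conclusion since any cyclic permutation of the four places still puts them in a single orbit. I would therefore present the proof in just a couple of lines, citing the $4$-cycle action from the previous proof together with the semigroup-invariance discussed in Section 2.
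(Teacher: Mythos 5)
Your proposal is correct and is exactly the paper's argument: the paper's proof reads ``This is clear, since $\sigma_4$ acts transitively on these places,'' relying on the same semigroup-invariance under automorphisms that you cite. You have simply spelled out the chain of equalities that the paper leaves implicit.
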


\begin{proof}
This is clear, since $\sigma_4$ acts transitively on these places.
\end{proof}

\subsection{The case $i=m-1$}

In this subsection we study the case $i=m-1$ and show that this case is very special. In fact we will see that $\cF_{m-1}$ is isomorphic to the famous Roquette function field:
$$\mathcal{R}=\mathbb{F}_{q^2}(\tilde{x},\tilde{y}): \tilde{y}^2=\tilde{x}^q+\tilde{x}.$$
Since the automorphism group of $\mathcal{R}$ is known, we immediately find the automorphism group of $\cF_{m-1}$ as well.

\begin{lemma} \label{Roquette}
$\cF_{m-1}$ and $\mathcal{R}$ are isomorphic.
\end{lemma}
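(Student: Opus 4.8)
The plan is to first rewrite $\cF_{m-1}$ as a hyperelliptic function field and then match it with $\mathcal{R}$ through a Cayley-type substitution. Concretely, in $\cF_{m-1}=\fqs(x,y)$ with $y^m=x^{m-1}(x^2+1)$, I would introduce $z:=y/x$ and $w:=x-1/x$. A direct computation gives $z^m=(x^2+1)/x=x+1/x$, so that $z^{q+1}=(z^m)^2=(x+1/x)^2$ and hence
$$w^2=(x-1/x)^2=x^2-2+x^{-2}=z^{q+1}-4,$$
where I used $q+1=2m$. Since $2x=z^m+w$ and $y=zx$, the functions $x,y$ lie in $\fqs(z,w)$, so $\cF_{m-1}=\fqs(z,w)$ with the single defining relation $w^2=z^{q+1}-4$; here the extension $\fqs(z,w)/\fqs(z)$ has degree two because $\cF_{m-1}$ has positive genus $m-1$ and so cannot be rational.

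Next I would produce an explicit isomorphism $\Psi\colon \cF_{m-1}\to \mathcal{R}$ by a fractional-linear change of the variable $z$. Fix any $\beta\in\fq^*$ and set
$$\Psi(z)=2\,\frac{\beta-\tilde x}{\beta+\tilde x},\qquad \Psi(w)=\frac{c\,\tilde y}{(\beta+\tilde x)^m},$$
where $c\in\fqs$ satisfies $c^2=-8\beta$. Such a $c$ exists in $\fqs$ because every element $a\in\fq^*$ is a square in $\fqs$ (indeed $a^{(q^2-1)/2}=(a^{q-1})^{(q+1)/2}=1$), which is also what guarantees the isomorphism is defined over $\fqs$. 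The heart of the verification is the identity
$$(\beta-\tilde x)^{q+1}-(\beta+\tilde x)^{q+1}=-2\beta(\tilde x^q+\tilde x),$$
which follows from $(\beta\pm\tilde x)^q=\beta\pm\tilde x^q$ (Frobenius, using $\beta\in\fq$ and $q$ odd) after expanding the two products. Combining this with $2^{q+1}=4$, $\tilde y^2=\tilde x^q+\tilde x$, and $(\beta+\tilde x)^{q+1}=\big((\beta+\tilde x)^m\big)^2$, a short computation shows $\Psi(w)^2=\Psi(z)^{q+1}-4$, so the defining relation of $\cF_{m-1}$ is carried exactly onto $\tilde y^2=\tilde x^q+\tilde x$ and $\Psi$ is a well-defined $\fqs$-homomorphism.

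Finally I would check that $\Psi$ is an isomorphism: being a homomorphism of fields it is injective, and it is surjective because $\Psi(z)$ is an invertible Möbius function of $\tilde x$, so $\tilde x\in\mathrm{Im}(\Psi)$ and then $\tilde y=\Psi(w)(\beta+\tilde x)^m/c\in\mathrm{Im}(\Psi)$, giving $\mathrm{Im}(\Psi)=\fqs(\tilde x,\tilde y)=\mathcal{R}$. The main obstacle is conceptual rather than computational: one must recognize that, in the model $w^2=z^{q+1}-4$, the function field $\cF_{m-1}$ is hyperelliptic with branch locus a coset of the norm-one subgroup $\mu_{q+1}$, while $\mathcal{R}$ is hyperelliptic with branch locus the line $\{a:a^q+a=0\}\cup\{\infty\}$, both of size $q+1$, and then to find the Möbius transformation carrying one branch locus onto the other. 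Once the Cayley substitution $z=2(\beta-\tilde x)/(\beta+\tilde x)$ is guessed, the remaining work is the routine identity above together with the surjectivity check.
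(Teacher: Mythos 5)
Your proof is correct, and it takes a genuinely different route from the paper's. The paper exhibits in one shot the explicit map $(x,y) \mapsto \bigl(\frac{y+2x}{2(y-2x)},\frac{x^{m+1}-x^{m-1}}{(y-2x)^m}\bigr)$, verifies by direct computation (via $y^{q+1}=x^{q-1}(x^2+1)^2$) that the image satisfies $\tilde{y}^2=\tilde{x}^q+\tilde{x}$, and then concludes isomorphism from the single remark that both fields have the same genus $m-1>1$ — implicitly a Riemann--Hurwitz degree argument: a nonconstant map between function fields of equal genus at least $2$ has degree one. You instead proceed in two conceptual steps: first you use $q+1=2m$ to collapse the Kummer model of $\cF_{m-1}$ into the hyperelliptic model $w^2=z^{q+1}-4$ via $z=y/x$, $w=x-1/x$ (with $2x=z^m+w$ recovering the original generators), and then you match this model to $\mathcal{R}$ by a M\"obius substitution chosen to carry one branch locus onto the other, the identity $(\beta-\tilde{x})^{q+1}-(\beta+\tilde{x})^{q+1}=-2\beta(\tilde{x}^q+\tilde{x})$ doing the real work. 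Your auxiliary claims all check out: $2^{q+1}=4$ since $2\in\fq$, the element $c$ with $c^2=-8\beta$ exists in $\fqs$ because every element of $\fq^*$ is a square there, and $[\fqs(z,w):\fqs(z)]=2$ follows from positivity of the genus. What each approach buys: the paper's proof is a few lines once the map has been guessed, but the map looks like magic; yours is longer but explains where the isomorphism comes from (two hyperelliptic fields of genus $m-1$ whose branch loci, each of size $q+1$, are M\"obius-equivalent), produces a family of isomorphisms (one per $\beta\in\fq^*$), and replaces the genus-comparison step by a direct surjectivity check, which in particular does not need the hypothesis $m-1>1$.
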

\begin{proof}
Consider the map
$$\begin{cases} \cF_{m-1} \rightarrow \mathcal{R}\\ (x,y) \mapsto \bigg(\frac{y+2x}{2(y-2x)},\frac{x^{m+1}-x^{m-1}}{(y-2x)^m} \bigg)=:(\tilde{x}, \tilde{y})\end{cases}.$$
Then from $y^{m}=x^{m-1}(x^2+1)$, we have $y^{q+1}=x^{q-1}(x^2+1)^2$. Now we prove $\tilde{y}^2=\tilde{x}^q+\tilde{x}$: \begin{align*}
\tx^q+\tx&= \left( \frac{y+2x}{2(y-2x)}\right)^q+ \left( \frac{y+2x}{2(y-2x)}\right)\\
&=\frac{(y+2x)^q(y-2x)+(y+2x)(y-2x)^q}{2(y-2x)^{q+1}}\\
&=\frac{2y^{q+1}-8x^{q+1}}{2(y-2x)^{q+1}}\\
&=\frac{2x^{q-1}(x^2+1)^2-8x^{q+1}}{2(y-2x)^{q+1}}\\
&=\frac{x^{q+3}-2x^{q+1}+x^{q-1}}{(y-2x)^{q+1}}\\
&=\ty^2.
\end{align*}
Since the function fields have the same genus $m-1$ and $m-1 >1$, the map above is an isomorphism.
\end{proof}

The Roquette function field has been studied extensively and in particular it is known that its automorphism group has cardinality $2 \cdot |\mathrm{PGL}(2,q)|$ and can be described as an extension of $\mathrm{PGL}(2,q)$ by a group of order two \cite[Theorem 12.11]{HKT2008}.

\bigskip

The following theorem summarizes our knowledge about $\aut(\cF_i)$, collecting all the partial results proven in this section. The statements for $q \le 31$ were obtained using a computer. 

\begin{theorem} \label{AUTOFINAL}
Let $q=p^n$ be the power of an odd prime $p$. Let $m=(q+1)/2$
and $\cF_i=\overline{\mathbb{F}}_{q^2}(x,y)$ with $y^m=x^i(x^2+1)$, $\gcd(i,m)=\gcd(i+2,m)=1$. Then $\aut(\cF_i)=G_i$ unless $(i,p)=(1,3)$, $(i,q)=(1,7)$ or $i \in \{(m-2)/2,m-1\}.$ Moreover:
\begin{itemize}
    \item $\aut(\cF_i)$ is infinite, if $(i,q)=(1,3)$,
    \item $\aut(\cF_i)$ is the semidirect product of $G_i$ and a cyclic group of order $3$ if $(i,p)=(1,3)$ and $q>3$,
    \item $\aut(\cF_i)$ is the semidirect product of a solvable group of order $48$ and a cyclic group of order $2$ if $(i,q)=(1,7)$,
    \item $\aut(\cF_i)$ contains a subgroup that is the semidirect product of $G_i$ and a cyclic group of order $4$ if $i=(m-2)/2$,
    \item $\cF_i$ is isomorphic to the Roquette function field and in particular $\aut(\cF_i)$ is an extension of $\mathrm{PGL}(2,q)$ by a group of order $2$ if $i=m-1.$
\end{itemize}
%
%
\end{theorem}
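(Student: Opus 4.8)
The plan is to assemble Theorem \ref{AUTOFINAL} as a bookkeeping corollary of the partial results already established in Section \ref{sec:aut}, treating each exceptional value of $i$ separately and verifying that the generic claim $\aut(\cF_i)=G_i$ covers everything else. First I would invoke Lemmas \ref{classes} and \ref{isom2} to reduce to the range $1 \le i \le \lfloor (m-2)/2 \rfloor$ together with $i=m-1$, since these isomorphisms preserve the automorphism group up to conjugation and the statement is phrased as an invariant property of $\cF_i$. Within this range, for $1 \le i \le (m-3)/2$ and $(i,q)\notin\{(1,3),(1,7)\}$, Proposition \ref{autoq31} gives exactly $\aut(\cF_i)=G_i$, which is the generic conclusion.

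Next I would dispatch the listed exceptions one at a time. The case $(i,q)=(1,3)$ is handled by the observation in the Remark following the start of Section \ref{sec:aut} that $\cF_1$ is elliptic when $q=3$, so its automorphism group is infinite by \cite[Theorem 11.94 (i)]{HKT2008}. For $(i,p)=(1,3)$ with $q>3$, Corollary \ref{cor_p3i1} states precisely that $\aut(\cF_1)=C_3 \rtimes G_1$, a semidirect product of $G_i$ with a cyclic group of order three. The case $(i,q)=(1,7)$ is the other genuinely computational exception: as recorded in the same Remark, a Magma computation gives $|\aut(\cF_1)|=96$ with the stated semidirect structure. For $i=(m-2)/2$, Theorem \ref{aut_bound_special_case_thm} supplies the subgroup $G_i \rtimes C_4$ via the explicit order-four automorphism $\sigma_4$. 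Finally, for $i=m-1$, Lemma \ref{Roquette} identifies $\cF_{m-1}$ with the Roquette function field $\mathcal{R}$, whence its automorphism group is the known extension of $\mathrm{PGL}(2,q)$ by a group of order two, citing \cite[Theorem 12.11]{HKT2008}.

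The one remaining gap is that the preceding propositions only treat $q>31$ in their general arguments, deferring small $q$ to machine computation. I would close this by explicitly stating, as the theorem already signals, that for $q \le 31$ all cases are verified directly with a computer algebra system, confirming both the generic identity $\aut(\cF_i)=G_i$ and the exceptional structures. Since every value of $i$ in the reduced range either falls under Proposition \ref{autoq31} or is one of the four listed exceptions, and since the isomorphisms of Lemmas \ref{classes} and \ref{isom2} carry the result to all $i \in \mathbb{Z}$ with $\gcd(i,m)=\gcd(i+2,m)=1$, the enumeration is complete.

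The main obstacle here is not any single hard estimate but rather ensuring the case analysis is genuinely exhaustive: one must check that the reduction to $1 \le i \le \lfloor (m-2)/2 \rfloor$ via Lemma \ref{isom2} does not silently move an exceptional $i$ out of or into the generic range, and that the sporadic pairs $(1,3)$, $(1,7)$ are the only obstructions to Proposition \ref{autoq31} applying. Because all the analytic work—the Hurwitz and Orbit-Stabilizer estimates, the holomorphic differential computations distinguishing the Weierstrass semigroups, and the explicit automorphism constructions—has already been carried out in the earlier results, the proof of Theorem \ref{AUTOFINAL} itself is essentially an organized citation of those results combined with the small-$q$ computer check.
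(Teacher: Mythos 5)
Your proposal matches the paper's approach exactly: the paper gives no separate argument for Theorem \ref{AUTOFINAL}, but presents it as a summary assembled from Proposition \ref{autoq31} (generic case), the remark on the elliptic case $(1,3)$ and the Magma computation for $(1,7)$, Corollary \ref{cor_p3i1}, Theorem \ref{aut_bound_special_case_thm}, Lemma \ref{Roquette} with \cite[Theorem 12.11]{HKT2008}, and a computer check for $q\le 31$, just as you describe. Your additional care about the reduction via Lemmas \ref{classes} and \ref{isom2} is consistent with the paper's standing convention (stated at the start of Section \ref{sec:aut}) that $i$ is taken in the reduced range, so the proposal is correct and essentially the same proof.
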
 

\begin{corollary}\label{short_orbits_in_omega_cor}
Let $q=p^n$ be the power of an odd prime $p$. Let $m=(q+1)/2$ and $i=1,\ldots,(m-3)/2$ such that $\gcd(i,m)=\gcd(i+2,m)=1$. Unless $(i,q)=(1,3)$ or $(i,q)=(1,7)$, $Aut(\cF_i)$ acts on $\Omega=\{P_0, P_\infty, P_\alpha, P_{-\alpha}\}$. Moreover, any short orbit of $Aut(\cF_i)$ is contained in $\Omega$. 
\end{corollary}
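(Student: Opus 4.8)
The plan is to leverage Theorem \ref{AUTOFINAL}, which already pins down $\aut(\cF_i)$ completely for the range $1 \le i \le (m-3)/2$ under the stated exclusions, and to convert that structural information into the orbit statement. The key observation is that the group $G_i$ has $\Omega = \{P_0, P_\infty, P_\alpha, P_{-\alpha}\}$ as the union of its three short orbits $\{P_\infty\}$, $\{P_0\}$ and $\{P_\alpha, P_{-\alpha}\}$, as recorded at the start of Section \ref{sec:aut}. So the crux is to show that $\aut(\cF_i)$ permutes $\Omega$ among itself and that it has no short orbits outside $\Omega$.

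First I would dispose of the generic case. By Theorem \ref{AUTOFINAL}, under the hypotheses $(i,q) \neq (1,3)$, $(i,q) \neq (1,7)$ and $i \le (m-3)/2$, we have $\aut(\cF_i) = G_i$. In this case the statement is immediate: the short orbits of $\aut(\cF_i)$ are exactly the short orbits of $G_i$, whose union is $\Omega$, and $G_i$ trivially acts on $\Omega$. The only remaining case within the stated range is $(i,p) = (1,3)$ with $q > 3$, where by Corollary \ref{cor_p3i1} one has $\aut(\cF_1) = C_3 \rtimes G_1$ with $C_3$ generated by $\gamma(x) = x+\alpha$, $\gamma(y) = y$.

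For this remaining case I would argue directly from the structure of $\aut(\cF_1)$. The normal subgroup $G_1$ fixes $P_\infty$ and $P_0$, so $\aut(\cF_1)$ acts on the fixed-place set of $G_1$; but the proof of Theorem \ref{actiononOmega} (Case 3) already established that $\aut(\cF_1)$ fixes $P_\infty$, hence $\gamma$ fixes $P_\infty$ and the whole group fixes $P_\infty$. Thus $\{P_\infty\}$ is a short orbit. Since $\gamma(x) = x+\alpha$ sends the zero of $x$ to the zero of $x - \alpha$, one checks that $\gamma$ permutes the places $P_0, P_\alpha, P_{-\alpha}$ (these are exactly the three affine totally ramified places over $x^3 + x = x(x^2+1)$, and $x^3 + x$ is $\gamma$-invariant since $(x+\alpha)^3 + (x+\alpha) = x^3 + x$ using $\alpha^2 = -1$ in characteristic $3$). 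Hence $C_3$ cyclically permutes $\{P_0, P_\alpha, P_{-\alpha}\}$ and $G_1$ fixes $P_0$ while swapping $P_\alpha, P_{-\alpha}$; together they act on $\Omega$, and $\{P_0,P_\alpha,P_{-\alpha}\}$ forms a single $\aut(\cF_1)$-orbit contained in $\Omega$.

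The last step, and the only place where a little care is needed, is ruling out short orbits outside $\Omega$. For $\aut(\cF_i) = G_i$ this is automatic. For $(i,p)=(1,3)$, any short orbit of $\aut(\cF_1)$ must in particular be a union of short orbits of the normal subgroup $G_1$ of prime-to-$p$ order, and the only short orbits of $G_1$ lie in $\Omega$; a place $Q \notin \Omega$ lies in a long $G_1$-orbit of size $|G_1| = q+1$, so its $\aut(\cF_1)$-orbit has size at least $q+1$, forcing $\aut(\cF_1)_Q$ to have order dividing $|\aut(\cF_1)|/(q+1) = 3$, and since $\cF_1$ has $p$-rank zero an element of order $3 = p$ fixes exactly one place by \cite[Lemma 11.129]{HKT2008}, so the stabilizer of such $Q$ cannot contain $C_3$ and must be trivial, making the orbit long. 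Thus every short orbit is contained in $\Omega$, completing the proof. The main obstacle I anticipate is precisely this orbit-counting bookkeeping in the $(i,p)=(1,3)$ case, since there $\aut(\cF_1)$ strictly contains $G_1$ and one must verify that the extra automorphism $\gamma$ does not create a new short orbit nor move $\Omega$ off itself.
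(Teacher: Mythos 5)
Your proposal is correct and follows essentially the same route as the paper: the generic case is immediate from Theorem \ref{AUTOFINAL} ($\aut(\cF_i)=G_i$), and the remaining case $(i,p)=(1,3)$, $q>3$ is handled via the structure $\aut(\cF_1)=C_3\rtimes G_1$ from Corollary \ref{cor_p3i1}, checking that $\gamma$ fixes $P_\infty$ and cyclically permutes $\{P_0,P_\alpha,P_{-\alpha}\}$, and that no short orbit exists outside $\Omega$. Your final stabilizer argument is fine, but to make it airtight note explicitly that since the whole group fixes $P_\infty$, the unique fixed place (by \cite[Lemma 11.129]{HKT2008}) of any order-$3$ element is $P_\infty$ itself, so such an element cannot stabilize a place $Q\notin\Omega$.
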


\begin{proof}
  From Theorem \ref{AUTOFINAL}, unless $i=1$ and either $q$ is a power of $3$ or $q=7$, $\aut(\cF_i)=G_i$ and hence fixes $P_0$ and $P_\infty$ and has $\{P_\alpha, P_{-\alpha}\}$ as another short orbit. Note that $G_i$ has no other short orbits.
  
  The fact that $\aut(\cF_i)$ acts on $\Omega$ is true also if $p=3$, $i=1$ and $q>3$. In fact in this case Theorem \ref{AUTOFINAL} gives that $\aut(\cF_1)=C_3 \rtimes G_1$. This group fixes $P_\infty$ (as both $C_3$ and $G_1$ do so) and acts on $\Omega \setminus \{P_\infty\}$ transitively. Since neither $G_1$ nor $C_3$ have short orbits outside $\Omega$, the group $\aut(\cF_1)$ acts with long orbits elsewhere. 
\end{proof}

Theorem \ref{AUTOFINAL} gives a precise description of $\aut(\cF_i)$ except when $i=(m-2)/2$ and $q>7$. We believe that in this case the automorphisms described in Theorem \ref{aut_bound_special_case_thm} form the entire automorphism group. We finish this section by stating this as a conjecture.

\begin{conjecture}
Let $q>7$ and $i=(m-2)/2$. Then $|\aut(F_i)|=4(q+1).$
\end{conjecture}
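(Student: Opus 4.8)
The lower bound $|\aut(\cF_i)|\ge 4(q+1)$ is already supplied by Theorem~\ref{aut_bound_special_case_thm}, so the plan is to prove the matching upper bound $|\aut(\cF_i)|\le 4(q+1)$. The cleanest route passes to the quotient by $H_i$. Since $H_i=\mathrm{Gal}(\overline{\mathbb F}_{q^2}\cF_i/\overline{\mathbb F}_{q^2}(x))$ is cyclic of order $m$ with fixed field the rational function field $\overline{\mathbb F}_{q^2}(x)$, the cover $\cF_i\to\cF_i/H_i\cong\mathbb P^1$ is totally ramified exactly over $x\in\{0,\infty,\alpha,-\alpha\}$, whose fibres are the places of $\Omega$. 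The key reduction I would establish is: \emph{if $H_i\trianglelefteq\aut(\cF_i)$, then $|\aut(\cF_i)|=4(q+1)$.} Granting normality, $\aut(\cF_i)$ acts on $\cF_i/H_i$, giving $\bar\pi:\aut(\cF_i)\to\mathrm{PGL}_2(\overline{\mathbb F}_{q^2})$. Its kernel is the set of automorphisms fixing $x$, which is $\mathrm{Gal}(\overline{\mathbb F}_{q^2}\cF_i/\overline{\mathbb F}_{q^2}(x))=H_i$, while its image permutes the branch locus $\{0,\infty,\alpha,-\alpha\}$. Because $\alpha^2=-1$, this quadruple has cross-ratio $-1$, i.e.\ is harmonic, and the Möbius transformations preserving a harmonic quadruple form a dihedral group of order $8$. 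Hence $|\bar\pi(\aut(\cF_i))|\le 8$ and $|\aut(\cF_i)|\le 8m=4(q+1)$. One checks directly that the images of $G_i$ and of $\sigma_4$ from Theorem~\ref{aut_bound_special_case_thm} realise the transposition $(\alpha,-\alpha)$ and the $4$-cycle $(0,-\alpha,\infty,\alpha)$, so the image is the full dihedral group and the bound is sharp.

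It thus remains to prove $H_i\trianglelefteq\aut(\cF_i)$. This is equivalent to the invariance of $\Omega$, since any automorphism fixing the four places of $\Omega$ fixes $P_0,P_\infty$ and hence multiplies $x$ by a constant, while fixing $P_{\pm\alpha}$ forces that constant to be $1$; so $H_i$ is precisely the pointwise stabiliser of $\Omega$, and $\Omega$ invariant gives $H_i$ normal. In turn, invariance of $\Omega$ amounts to showing that the $\aut(\cF_i)$-orbit $O_\infty$ of $P_\infty$ equals $\Omega$. To attack this I would mimic Theorem~\ref{actiononOmega}: if $O_\infty\ne\Omega$, then the existence of $\sigma_4$ forces $\Omega\subseteq O_\infty$, and as $O_\infty$ is a union of $G_i$-orbits whose only short ones lie in $\Omega$, any additional part consists of long orbits, so $|O_\infty|\ge q+5$ and $|\aut(\cF_i)|=|O_\infty|\,|\aut(\cF_i)_{P_\infty}|\ge(q+5)(q+1)>84(g-1)$ for $q>31$. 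This triggers the trichotomy of \cite[Theorem 11.56]{HKT2008}, after which one analyses $\aut(\cF_i)_{P_\infty}=S_p\rtimes C$, the fixed field $\widetilde{\cF}_i$ of the Sylow $p$-subgroup $S_p$, and its genus. Two features help: the constraint $\gcd(i,m)=1$ with $m\equiv 0\pmod 4$ forces $q\equiv 7\pmod 8$, hence $p\ge 7$, removing the characteristic-three subcases; and the cases $g(\widetilde{\cF}_i)\ge 2$ and $g(\widetilde{\cF}_i)=1$ are eliminated by \cite[Theorems 11.79, 11.95]{HKT2008} together with the Hurwitz formula, exactly as before.

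The hard part, and the reason this is stated only as a conjecture, is the subcase $g(\widetilde{\cF}_i)=0$. In Theorem~\ref{actiononOmega} this was closed by noting it would force $H(P_\alpha)=H(P_0)$, hence $i=1$, contradicting Theorem~\ref{differentP0Pal}. For $i=(m-2)/2$ this leverage vanishes: by Corollary~\ref{equal_semigroups_cor} the four places of $\Omega$ already share a single Weierstrass semigroup, so the semigroup no longer separates $P_\alpha$, $P_0$ and $P_\infty$, and the configuration in which a nontrivial $S_p$ fuses $P_0,P_\alpha,P_{-\alpha}$ into one orbit while the cyclic quotient still fixes only two points of $\mathbb P^1$ is no longer excluded. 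Closing this gap appears to require a genuinely new invariant separating the places of $\Omega$ from the remaining places of $\cF_i$ (for instance a higher-order Weierstrass or ramification invariant, or a direct analysis of the explicit model ruling out any $p$-power automorphism fusing three of the four branch fibres), or else a direct proof that $H_i$ is the unique cyclic subgroup of order $m$ with genus-zero quotient, which would make it characteristic. For the finitely many fields with $q\le 31$ (here only $q\in\{23,31\}$) the equality $|\aut(\cF_i)|=4(q+1)$ can be, and has been, verified by direct computation.
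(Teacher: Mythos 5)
You are addressing a statement that the paper itself leaves as a \emph{conjecture}: there is no proof in the paper to compare against, and your proposal, by your own admission, does not close the question either. What you do establish is correct and worthwhile: the lower bound $|\aut(\cF_{(m-2)/2})|\ge 4(q+1)$ follows from Theorem \ref{aut_bound_special_case_thm}, and your conditional upper bound is sound --- if $\Omega$ is $\aut(\cF_i)$-invariant, then $H_i$, being the pointwise stabiliser of $\Omega$ (an automorphism fixing $P_0$ and $P_\infty$ sends $x\mapsto\lambda x$, and fixing $P_{\pm\alpha}$ forces $\lambda=1$), is the kernel of the action on $\cF_i/H_i\cong\mathbb{P}^1$, the image lands in the stabiliser of the harmonic quadruple $\{0,\infty,\alpha,-\alpha\}$ (cross-ratio $-1$), which is dihedral of order $8$, and $|\aut(\cF_i)|\le 8m=4(q+1)$, with sharpness witnessed by $G_i$ and $\sigma_4$. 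This reduction of the conjecture to the single statement ``$O_\infty\subseteq\Omega$'' is a genuine contribution beyond what the paper records.

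The genuine gap is exactly that statement, and it is larger than your write-up suggests. You present the obstruction as confined to the subcase $g(\widetilde{\cF}_i)=0$, with the cases $g(\widetilde{\cF}_i)\ge 2$ and $g(\widetilde{\cF}_i)=1$ ``eliminated exactly as before''; but the paper's argument in Theorem \ref{actiononOmega} breaks earlier than that. Its first structural step is that $O_\infty$ cannot contain $P_\alpha$ (Theorem \ref{distinguishing_Palph_Pinf_cor}), which is what rules out cases (1) and (2) of the trichotomy from \cite[Theorem 11.56]{HKT2008} and identifies two distinct short orbits $O_\infty$ and $O_\alpha$ before any quotient-genus analysis begins. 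For $i=(m-2)/2$ this step is unavailable: by Corollary \ref{equal_semigroups_cor} (indeed by the existence of $\sigma_4$) all four places of $\Omega$ share one semigroup and lie in a single $\aut$-orbit, so a priori $\aut(\cF_{(m-2)/2})$ could have exactly one short orbit, a configuration your sketch does not address, and the later semigroup comparison $H(P_\alpha)=H(P_0)\Rightarrow i=1$ via Theorem \ref{differentP0Pal} is likewise void here. So the proposal is an honest partial result together with a programme --- not a proof --- and the conjecture remains open precisely at the step you flag (plus the earlier trichotomy step noted above); only the small cases $q\in\{23,31\}$ are settled, by machine computation rather than argument.
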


\section{Isomorphism classes}

Using the results from Section \ref{sec_initial_results}, we will in this section complete the study of the isomorphism classes. Whereas previously, we used the notation $P_0,P_\a,P_{-\a}$ and $P_\infty$ to indicate certain places of $\cF_i$, we now change the notation to $P_0^i,P_{\a}^i,P_{-\a}^i$ and $P_\infty^i$. The reason is that we will have to keep track of places of several function fields at the same time. Similarly, to be able to describe maps between different function fields $\cF_i$ and $\cF_j$, we will in this section sometimes write $\cF_i= \mathbb{F}_{q^2}(x_i, y_i)$ with $y_i^m = x_i^i (x_i^2 + 1)$, when the role of $i$ needs to be emphasized.

As mentioned before, when studying isomorphism classes, we only need to consider the function fields $\cF_i$ for $1 \le i \le (m-2)/2$ and $i=m-1$. We start by showing that $\cF_{m-1}$ is not isomorphic to any of the others.

\begin{theorem}
For any $i\in \mathbb{Z}$, satisfying $1\leq i \leq (m-2)/2$ and $\gcd(i,m) = \gcd(i+2,m) = 1$, the function fields $\cF_i$ and $\cF_{m-1}$ are non-isomorphic.
\end{theorem}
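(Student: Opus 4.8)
The plan is to distinguish $\cF_{m-1}$ from all the $\cF_i$ with $1\le i\le (m-2)/2$ by finding an isomorphism invariant on which they differ. The most natural candidate is the set of Weierstrass semigroups occurring at places of the function field, and more specifically the question of whether the integer $2$ is a pole number at \emph{some} place. Since $\cF_{m-1}$ is isomorphic to the Roquette function field by Lemma \ref{Roquette}, and $\mathcal{R}=\mathbb{F}_{q^2}(\tilde x,\tilde y)$ with $\tilde y^2=\tilde x^q+\tilde x$ has a place (the place at infinity) whose Weierstrass semigroup contains $2$, the function field $\cF_{m-1}$ possesses a place $P$ with $2\in H(P)$. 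The strategy is therefore to show that for $1\le i\le (m-2)/2$, \emph{no} place $Q$ of $\cF_i$ satisfies $2\in H(Q)$; equivalently, $2$ is a gap at every place of $\cF_i$.

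To carry this out, I would split the places of $\cF_i$ into the finitely many special places $\Omega=\{P_0,P_\alpha,P_{-\alpha},P_\infty\}$ and the generic places. For the special places, the description of the gap sequences at $P_0$ and $P_\infty$ in Remark \ref{gaps_at_P0_and_Pinf_lemma}, together with the information on $P_{\pm\alpha}$ in Lemma \ref{gapsequence_Palph_lemma}, should directly confirm that $2$ is a gap there (note $2\le \lfloor (m+1)/2\rfloor$ once $m\ge 3$, so $2\in G(P_{\pm\alpha})$ by Lemma \ref{gapsequence_Palph_lemma}; and $2\in G(P_0),G(P_\infty)$ follows since for small $i$ the semigroups at $P_0$ and $P_\infty$ have smallest nonzero generators exceeding $2$). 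For a generic place $Q$ centered at an affine point with $x$-coordinate $a\notin\{0,\pm\alpha\}$, the key is to exhibit a holomorphic differential $\omega$ with $v_Q(\omega)=1$, so that $v_Q(\omega)+1=2$ is a gap at $Q$ by Proposition \ref{propgaps}. The natural choice is
$$\omega=(x-a)\,\frac{dx}{y^{m-1}},$$
whose divisor can be computed from Equation \eqref{divisors}: one finds $(\omega)=(i-1)P_0+Q+E+(m-3-i)P_\infty$ with $E\ge 0$ and $Q\notin\supp(E)$, which is effective precisely because $1\le i\le m-3$. Hence $\omega$ is holomorphic and $v_Q(\omega)=1$.

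I expect the main obstacle to be bookkeeping rather than conceptual: carefully verifying that the differential $\omega$ really is holomorphic in the generic case (that the coefficients of $P_0$ and $P_\infty$ in its divisor are nonnegative, which forces the inequalities $i-1\ge 0$ and $m-3-i\ge 0$, both valid since $1\le i\le (m-2)/2\le m-3$ for $m\ge 5$), and in ensuring the simple-zero claim $v_Q(x-a)=1$ at the chosen generic place. A secondary point is to make sure the argument covers the finitely many small-$q$ or small-$m$ exceptional values where some inequality degenerates; these can be checked by hand or by computer. The conceptual core, however, is clean: the invariant ``$2$ is a pole number at some place'' separates the Roquette field $\cF_{m-1}$ (where it holds) from every $\cF_i$ with $1\le i\le (m-2)/2$ (where $2$ is a gap at every place), so no isomorphism between them can exist.
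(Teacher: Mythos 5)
Your overall strategy is sound and is in fact close in spirit to an argument the paper only deploys partially: you use the invariant ``there exists a place with $2$ as a pole order'' (which $\cF_{m-1}$ has, being isomorphic to the hyperelliptic Roquette field) uniformly for all $1\le i\le (m-2)/2$, whereas the paper first eliminates every $i\ne (m-2)/2$ simply by comparing automorphism group orders (Theorem \ref{AUTOFINAL}: $|\aut(\cF_i)|\le 3(q+1)$ there, versus $2|\mathrm{PGL}(2,q)|$ for the Roquette field) and only invokes the ``$2$ is a gap at every place'' argument for the single remaining case $i=(m-2)/2$, using Remark \ref{rem:special_i} for the places in $\Omega$ and a holomorphic differential for the places outside $\Omega$. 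Your uniform route buys independence from the full automorphism-group computation, at the price of having to verify the gap claim at $P_0$ and $P_\infty$ for all admissible $i$; that verification does work, and your brief justification can be made precise via Remark \ref{rem:Apery}: writing $\ell i=um+r$ and $\ell(i+2)=u'm+r'$ with $1\le r,r'\le m-1$, the nonzero Ap\'ery generators of $H(P_\infty)$ and $H(P_0)$ are $r+2\ell\ge 3$ and $m+2\ell-r'\ge 3$ respectively, so $2$ is a gap at both places; at $P_{\pm\alpha}$ it follows from Lemma \ref{gapsequence_Palph_lemma} as you say.

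There is, however, a concrete error in the key step for the generic places: the differential you wrote, $\omega=(x-a)\,dx/y^{m-1}$, is \emph{not} holomorphic for $i\ge 2$. From Equation \eqref{divisors} its divisor has coefficient $(m-1)(1-i)$ at $P_0$, which is negative as soon as $i\ge 2$, so Proposition \ref{propgaps} cannot be applied to it; your claimed divisor $(i-1)P_0+Q+E+(m-3-i)P_\infty$ is not the divisor of that differential (the two agree only when $i=1$). The fix is to take
$$\omega=(x-a)\,x^{i-1}\,\frac{dx}{y^{m-1}},$$
which does have exactly the divisor you state, is holomorphic precisely because $1\le i\le m-3$, and has a simple zero at $Q$ since $x=a$ is unramified in $\cF_i/\overline{\mathbb{F}}_{q^2}(x)$ for $a\notin\{0,\pm\alpha\}$ (this is also the differential the paper uses, with $i=(m-2)/2$, in its own proof). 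With that correction, and the routine check of the finitely many small values of $m$, your argument goes through.
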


\begin{proof}
From the size of the automorphism groups as listed in Theorem \ref{AUTOFINAL}, we see that $\cF_i$ and $\cF_{m-1}$ cannot be isomorphic except possibly when $i=(m-2)/2.$ The function field $\overline{\mathbb F}_{q^2}\cF_{m-1}$ has a place $P$ such that two is a gap of $P$, since $\cF_{m-1}$ is isomorphic to the Roquette function field. On the other hand, very similar as in the proof of Theorem \ref{actiononOmega}, one shows that $\overline{\mathbb F}_{q^2}\cF_{(m-2)/2}$ has no such place $Q$ using Remark \ref{rem:special_i} for $Q \in \Omega$ and the holomorphic differential $\frac{(x-a)x^{(m-2)/2-1}dx}{y^{m-1}}$, with $a:=x(Q)$, for $Q \not\in \Omega$. 
\end{proof}

We are left with studying whether or not for some $i, j \in \mathbb{Z}$, satisfying $1\leq i < j \leq (m-2)/2$, the function fields $\cF_i$ and $\cF_j$ can be isomorphic. We will see that this cannot happen.

\begin{theorem}\label{thm:non_isomorphic}
    For any $i, j \in \mathbb{Z}$, satisfying $1\leq i < j \leq \lfloor (m-2)/2 \rfloor$ and $\gcd(i,m) = \gcd(i+2,m) = \gcd(j,m) = \gcd(j+2, m) = 1$, the function fields $\cF_i$ and $\cF_j$ are non-isomorphic.
\end{theorem}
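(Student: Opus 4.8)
The plan is to prove non-isomorphism by using the automorphism group together with the Weierstrass semigroups at the canonical short orbits, exactly mirroring the strategy of the inspiring example in Section~2. The key structural input is Corollary~\ref{short_orbits_in_omega_cor}: for $1 \le i \le (m-3)/2$ with $(i,q) \notin \{(1,3),(1,7)\}$, the full automorphism group $\aut(\cF_i)$ acts on $\Omega=\{P_0^i,P_\infty^i,P_\alpha^i,P_{-\alpha}^i\}$, and every short orbit of $\aut(\cF_i)$ is contained in $\Omega$. Since an isomorphism $\varphi \colon \cF_i \to \cF_j$ conjugates $\aut(\cF_i)$ onto $\aut(\cF_j)$, it must carry short orbits to short orbits, and hence map the set of fixed/short-orbit places lying in $\Omega_i$ onto the corresponding set $\Omega_j$. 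Because isomorphisms preserve Weierstrass semigroups, the \emph{multiset} of semigroups $\{H(P) \mid P \in \Omega_i\}$ must then equal the multiset $\{H(P) \mid P \in \Omega_j\}$.

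First I would dispose of the generic case $2 \le i,j \le (m-3)/2$. Here Theorems~\ref{distinguishing_Palph_Pinf_cor} and~\ref{differentP0Pal} show that within a single $\cF_i$ the three semigroups $H(P_\infty^i)$, $H(P_0^i)$ and $H(P_{\pm\alpha}^i)$ are pairwise distinct, so the short-orbit structure rigidly distinguishes the three types of place. The heart of the argument is then to extract an invariant from these semigroups that depends on $i$ in an injective way. The natural candidate is $H(P_\infty^i)$: from the divisor of $y$ in Equation~\eqref{divisors} one reads $i+2 \in H(P_\infty^i)$, and by Theorem~\ref{weierstrass_generators_P0_Pinf_thm} together with Remark~\ref{rem:Apery} the generators form an Ap\'ery set with respect to $m$, so $i+2$ is in fact the smallest nonzero element of $H(P_\infty^i)$ (its multiplicity). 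I would show that the multiplicity of $H(P_\infty^i)$ equals $i+2$ for $i$ in this range (using $i+2 \le (m+1)/2 < m$ and Corollary~\ref{divisors_of_m_are_gaps_cor} to rule out smaller candidates), and that $P_\infty^i$ is intrinsically singled out in $\Omega_i$ as the unique place whose semigroup has this smallest multiplicity. Then an isomorphism must send $P_\infty^i \mapsto P_\infty^j$ and hence force $i+2=j+2$, i.e.\ $i=j$, a contradiction with $i<j$.

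The remaining cases require separate, short treatment. When $i=1$ and $q$ is a power of $3$ or $q=7$, Corollary~\ref{short_orbits_in_omega_cor} does not apply, so I would instead compare $\aut(\cF_1)$ directly: by Theorem~\ref{AUTOFINAL} its order is $3(q+1)$ or $96$, which differs from $|\aut(\cF_j)|=|G_j|=q+1$ for $j \ge 2$, settling those against any generic $j$; and the two exceptional values of $q$ are handled by the computer verification already invoked for $q \le 31$. The boundary case $i=1$, $j=(m-2)/2$, and more generally any comparison involving $j=(m-2)/2$, is delicate because by Remark~\ref{rem:special_i} and Corollary~\ref{equal_semigroups_cor} all four places of $\Omega_{(m-2)/2}$ share the \emph{same} semigroup, so the multiplicity invariant above degenerates. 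I expect this to be the main obstacle. Here I would instead compare the common semigroup of $\cF_{(m-2)/2}$, which Remark~\ref{rem:special_i} computes explicitly as $\langle \tfrac{m}{2}+1,\tfrac{m}{2}+3,m-3,m-1,m\rangle$ with multiplicity $\tfrac{m}{2}+1$, against the semigroups occurring in $\Omega_j$ for $j<(m-2)/2$; since for such $j$ the place $P_\infty^j$ has multiplicity $j+2 \le \tfrac{m}{2} < \tfrac{m}{2}+1$, no place of $\cF_j$ can have the required common semigroup, and the four-place short-orbit signature forces a contradiction. Throughout, I would keep explicit track of the Ap\'ery/multiplicity data so that each comparison reduces to an inequality among the quantities $i+2$, $j+2$ and $\tfrac{m}{2}+1$, rather than a full semigroup computation.
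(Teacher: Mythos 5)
Your overall strategy (use Corollary~\ref{short_orbits_in_omega_cor} to force the isomorphism to respect $\Omega$, then distinguish the places by semigroup data) starts out parallel to the paper, but the step that is supposed to finish the generic case contains a genuine error. You claim that, because the generators in Theorem~\ref{weierstrass_generators_P0_Pinf_thm} form an Ap\'ery set with respect to $m$, the element $i+2$ is the multiplicity (smallest nonzero element) of $H(P_\infty^i)$, and that Corollary~\ref{divisors_of_m_are_gaps_cor} rules out smaller candidates. The Ap\'ery property only says that each listed generator is minimal \emph{within its congruence class modulo $m$}; it says nothing about global minimality, and Corollary~\ref{divisors_of_m_are_gaps_cor} only excludes proper divisors of $m$, not arbitrary small integers. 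Concretely, take $q=59$, $m=30$, $i=11$ (so $\gcd(11,30)=\gcd(13,30)=1$ and $i\le\lfloor(m-3)/2\rfloor$): from Equation~\eqref{divisors} one computes
$$\left(\frac{y^3}{x}\right)=3P_0+3P_\alpha+3P_{-\alpha}-9P_\infty,$$
so $9\in H(P_\infty^{11})$ while $i+2=13$ (similarly $m=122$, $i=41$ gives $7\in H(P_\infty^{41})$ versus $i+2=43$). Hence the multiplicity of $H(P_\infty^i)$ is not $i+2$ in general, and your conclusion ``an isomorphism forces $i+2=j+2$'' does not follow. Note also that your argument implicitly needs $i\mapsto H(P_\infty^i)$ to be injective (or some other intrinsic invariant separating the $P_\infty$'s), which is neither established in your proposal nor anywhere in the paper.

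The paper's proof avoids this entirely: it never compares the abstract semigroups $H(P_\infty^i)$ and $H(P_\infty^j)$. Instead, after using the semigroup lemmas (Theorem~\ref{distinguishing_Palph_Pinf_cor}, Lemma~\ref{gapsequence_Palph_lemma}) only to force $\phi^*(P_\infty^i)=P_\infty^j$, it exploits the isomorphism itself: if $P_0^i\mapsto P_0^j$ (resp.\ $P_\infty^i\mapsto P_0^j$), the function $\phi(y_j)/y_i$ (resp.\ $y_i/\phi(y_j)$) has divisor $(j-i)(P_0^i-P_\infty^i)$ (resp.\ $(i+j+2)(P_0^i-P_\infty^i)$), which together with $(x_i)=m(P_0^i-P_\infty^i)$ puts a proper divisor of $m$ into $H(P_\infty^i)$, contradicting Corollary~\ref{divisors_of_m_are_gaps_cor}; the remaining option $P_0^i\mapsto P_{\pm\alpha}^j$ is excluded by Lemmas~\ref{P0_polenumber_lessthan_lemma} and~\ref{Palph_P0_specialcase_lemma} (with a computer check for $m<19$). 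Also, the case $j=(m-2)/2$ is settled in the paper simply by comparing automorphism group orders ($\aut(\cF_{(m-2)/2})$ contains a subgroup of order $4(q+1)$ by Theorem~\ref{aut_bound_special_case_thm}, while $|\aut(\cF_i)|\le 3(q+1)$ otherwise in this range), which is cleaner than your semigroup comparison there; and the exceptional pairs $(i,q)=(1,3),(1,7)$ you worry about are vacuous in the range $1\le i<j\le\lfloor(m-2)/2\rfloor$ since they force $q\le 7$. To repair your argument you would either have to prove injectivity of $i\mapsto H(P_\infty^i)$ on the stated range, or switch to the paper's divisor-based argument.
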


\begin{proof}
    First, note that $1 \leq i < j \leq (m-2)/2$ only occurs when $(m-2)/2 \geq 2$, so we can assume $q \geq 11$. We may conclude then, from Theorem \ref{AUTOFINAL}, that the order of $\aut(\cF_i)$ is at most $3(q+1)$, and for $j = \frac{m-2}{2}$ the statement of the theorem follows already from Theorem \ref{aut_bound_special_case_thm}.

    Now, assume that $1\leq i<j \leq \lfloor \frac{m-3}{2} \rfloor$, and assume for contradiction that we have an isomorphism $\phi : \cF_j \to \cF_i$. We denote the induced map from the set of places of $\cF_i$ to the set of places of $\cF_j$ by $\phi^*$. Such a map $\phi^*$ maps short orbits to short orbits, so it follows from Corollary \ref{short_orbits_in_omega_cor} that the places $P_0^i, P_\infty^i,$ and $P_{\pm\alpha}^i$ of $\cF_i$ must be mapped to the places $P_0^j, P_\infty^j,$ and $P_{\pm\alpha}^j$ of $\cF_j$. We will use the results on the semigroups at these points to show that such an isomorphism cannot exist. 
    
    The first step towards finding a contradiction is to show that $\phi^*(P_\infty^i) = P_\infty^j$. Just as in the proof of Theorem \ref{distinguishing_Palph_Pinf_cor}, we observe that $i+2 \in H(P_\infty^i)$ while $i+2 \notin H(P_{\pm\alpha}^j)$, so $H(P_\infty^i) \neq H(P_{\pm\alpha}^j)$. Similarly, $j+2 \in H(P_\infty^j)$ while $j+2 \notin H(P_{\pm\alpha}^i)$, so $H(P_\infty^j) \neq H(P_{\pm\alpha}^i)$. It follows that $P_\infty^i$ cannot be mapped to $P_{\pm\alpha}^j$ by $\phi^*$ and that $P_\infty^j$ cannot be mapped to $P_{\pm\alpha}^i$ by the inverse of $\phi^*$. There are two remaining options: Either $\phi^*$ maps $P_\infty^i$ to $P_\infty^j$ or it maps $P_\infty^i$ to $P_0^j$ and $P_0^i$ to $P_\infty^j$.

    Assume that the latter happens. Since either $\phi^*(P_{\pm\alpha}^i) = P_{\pm\alpha}^j$ or $\phi^*(P_{\pm\alpha}^i) = P_{\mp\alpha}^j$, we find a principal divisor
    $$
        \left( \frac{y_i}{\phi(y_j)} \right)_{\cF_i} = (i+j+2) (P_0^i - P_\infty^i).
    $$
    At the same time, we have the divisor $(x_i)_{\cF_i} = m(P_0^i - P_\infty^i)$, so it follows that $\gcd(i+j+2, m) \in H(P_\infty^i)$. Since $i+j+2 < m$, this is in contradiction with Corollary \ref{divisors_of_m_are_gaps_cor} and we conclude that $\phi^*$ maps $P_\infty^i$ to $P_\infty^j$.

    Next, we investigate whether $P_0^i$ may be mapped to $P_0^j$ by $\phi^*$. If $\phi^*(P_0^i) = P_0^j$ then we find a principal divisor
    $$
        \left( \frac{\phi(y_j)}{y_i}\right)_{\cF_i} = (j-i) (P_0^i - P_\infty^i),
    $$
    and we obtain a contradiction by a similar argument as above. 

    The only remaining option is $\phi^*(P_0^i) = P_{\pm\alpha}^j$. For $i > 2$ and $m\geq 19$ this is excluded by Lemma \ref{P0_polenumber_lessthan_lemma}. Similarly, by considering the image of $P_0^j$ under the inverse of $\phi^*$, we can, for $m\geq 19$, exclude all cases except $j = \lfloor \frac{m-3}{2} \rfloor$. For $i=2$ and $j = \lfloor \frac{m-3}{2} \rfloor$ it follows from $\gcd(i,m) = 1$ that $m$ is odd, so in fact $j = \frac{m-3}{2}$, and we reach a contradiction by using Lemma \ref{Palph_P0_specialcase_lemma}.

    Finally, we use Magma to calculate the relevant semigroups for $m<19$, and conclude that an isomorphism between $\cF_i$ and $\cF_j$ cannot exist. 
\end{proof}

What is left to do is to obtain an expression for the total number of distinct isomorphism classes of the function fields $\cF_i$. This number can be determined from the prime factorization of $m$. 

\begin{theorem} \label{numbernoniso}
    Write $m = 2^{\alpha_0} p_1^{\alpha_1} p_2^{\alpha_2} \cdots p_n^{\alpha_n}$, for distinct odd primes $p_1, \dots, p_n$, with $\alpha_0 \in \mathbb{Z}_{\geq 0}$ and $\alpha_1, \dots, \alpha_n \in \mathbb{Z}_{>0}$. The number of isomorphism classes among the function fields $\cF_i$ defined in Equation \eqref{curveC} is
    $$
        N(m) = 
        \begin{cases}
            \frac{\phi_2(m) + 1}{2} & \text{ for } m \not\equiv 0 \pmod 4,\\
            \frac{\phi_2(m) + 2}{2} & \text{ for } m \equiv 0 \pmod 4,
        \end{cases}
    $$
    where $\phi_2(m)$ is given by 
    $$
        \phi_2(m) = 2^{\max\{0,\alpha_0-1\}} p_1^{\alpha_1 - 1}(p_1-2) \cdots p_n^{\alpha_n -1}(p_n - 2).
    $$
\end{theorem}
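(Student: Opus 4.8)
The plan is to reduce the count of isomorphism classes to an orbit-counting problem for a single involution on the set of admissible indices, and then to evaluate everything through the Chinese Remainder Theorem. Write
$$S=\{\,i \bmod m : \gcd(i,m)=\gcd(i+2,m)=1\,\},$$
so that $|S|$ will turn out to be exactly $\phi_2(m)$. By Lemma \ref{classes} every $\cF_i$ is isomorphic to $\cF_{i\bmod m}$, and by Lemma \ref{isom2} we have $\cF_i\cong\cF_{m-2-i}$; combining these, every admissible function field is isomorphic to one whose index lies in the reduced range
$$R:=\{1,\dots,\lfloor (m-2)/2\rfloor\}\cup\{m-1\}.$$
Theorem \ref{thm:non_isomorphic}, together with the preceding theorem (which separates $\cF_{m-1}$ from all $\cF_i$ with $1\le i\le (m-2)/2$), shows that the fields indexed by $R$ are pairwise non-isomorphic. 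Hence $N(m)$ equals the number of admissible indices in $R$, i.e. $N(m)=|R\cap S|$, and the entire task becomes combinatorial.

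First I would establish the formula for $\phi_2(m)=|S|$. By the Chinese Remainder Theorem the count is multiplicative over the prime-power factors of $m$, so it suffices to count residues modulo each $p^\alpha\parallel m$ avoiding $i\equiv 0$ and $i\equiv -2$. For an odd prime $p$ these two forbidden classes are distinct, leaving $p-2$ admissible classes mod $p$, hence $p^{\alpha-1}(p-2)$ mod $p^\alpha$. For $p=2$ the two conditions collapse into the single requirement that $i$ be odd, giving $2^{\alpha_0-1}$ admissible residues mod $2^{\alpha_0}$ when $\alpha_0\ge 1$ and no constraint when $\alpha_0=0$; this is exactly the factor $2^{\max\{0,\alpha_0-1\}}$. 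Multiplying yields the stated expression for $\phi_2(m)$.

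The final and most delicate step is to pass from $|S|$ to $|R\cap S|$ using the involution $\iota:i\mapsto m-2-i$ on $S$, which is well defined because $\gcd(m-2-i,m)=\gcd(i+2,m)$ and $\gcd(m-i,m)=\gcd(i,m)$. I would check that $R$ is a complete set of representatives for the orbits of $\iota$: each pair $\{i,m-2-i\}$ has sum $m-2$, so exactly one of its members lies in $\{1,\dots,\lfloor (m-2)/2\rfloor\}$ unless the two coincide, while the residue $m-1$ accounts for the self-paired index $\equiv -1$. By the orbit-counting identity for an involution, $|R\cap S|=(|S|+f)/2$, where $f$ is the number of fixed points of $\iota$ in $S$. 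Solving $i\equiv m-2-i$, i.e. $2(i+1)\equiv 0\pmod m$, yields $i\equiv m-1$ when $m$ is odd (one fixed point) and the two candidates $i\equiv m-1$ and $i\equiv m/2-1$ when $m$ is even. Here lies the main obstacle: one must decide whether $m/2-1$ belongs to $S$. When $m\equiv 2\pmod 4$ the index $m/2-1$ is even and so fails $\gcd(\cdot,m)=1$, giving $f=1$; when $m\equiv 0\pmod 4$ it is odd and congruent to $-1$ modulo every odd prime divisor of $m$, so it lies in $S$ and $f=2$. Substituting $f$ into $(\phi_2(m)+f)/2$ then produces precisely the two cases of the claimed formula for $N(m)$.
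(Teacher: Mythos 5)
Your proposal is correct and follows essentially the same route as the paper: the same CRT computation of $\phi_2(m)$, the same reduction via Lemmas \ref{classes} and \ref{isom2}, and the same appeal to Theorem \ref{thm:non_isomorphic} and the Roquette comparison, with the final count handled by the pairing $i \mapsto m-2-i$. The only (cosmetic) difference is that you package the bookkeeping as orbit-counting for an involution on the admissible residues, so that $m-1$ appears as a fixed point alongside $m/2-1$, whereas the paper sets $i=m-1$ aside first and then pairs up the remaining $\phi_2(m)-1$ indices; the arithmetic is identical.
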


\begin{proof}
    We immediately reduce the problem to counting the distinct isomorphism classes among the set of function fields corresponding to $i \in \{0, 1, \dots, m-1\}$, for which $\gcd(i,m) = \gcd(i+2,m) = 1$. We claim that there are exactly $\phi_2(m)$ function fields in this set, i.e., that
    $$
        \phi_2(m) = |\{ i \in \{0, 1, \dots, m-1\} \mid \gcd(i,m) = \gcd(i+2,m) = 1 \}|.
    $$
    
    Assume first that $m$ is odd, so that $\alpha_0 = 0$. By the Chinese Remainder Theorem, each $i \in \{1, 2, \dots, m-1\}$ corresponds exactly to a tuple $( x_1, x_2, \dots, x_n)\in \mathbb{Z}^n$, with $0\leq x_j < p_j^{\alpha_j}$ and $i \equiv x_j \pmod{p_j^{\alpha_j}}$ for $j = 1, \dots, n$. There is a non-trivial common divisor of $i$ and $m$ if and only if $x_j$ is divisible by $p_j$ for at least one $j$. Similarly, $i+2$ and $m$ have a non-trivial common divisor if and only if $x_j + 2$ is divisible by $p_j$ for at least one $j$. 
    
    For each $j \in \{1,\dots, n\}$, there are $p_j^{\alpha_j-1}(p_j-2)$ possible choices for $x_j$ such that neither $x_j$ nor $x_j + 2$ is divisible by $p_j$. It follows that there are exactly $\phi_2(m)$ tuples, and hence $\phi_2(m)$ possible values for $i$, such that the conditions on the greatest common divisors are satisfied.

    For even $m$, i.e., for $\alpha_0 \geq 1$, a very similar argument applies. The only thing that needs to be considered when dealing with the prime 2, as opposed to the odd primes, is that $i \equiv i + 2 \pmod{2}$. The number of possible choices for $i$ modulo $2^{\alpha_0}$ is then $2^{\alpha_0-1}(2-1) = 2^{\alpha_0-1}$. This finishes the proof of the claim.

    Now, among these $\phi_2(m)$ different possible values for $i$, we know that $i = m-1$ corresponds to a function field that is not isomorphic to any of the others, namely the Roquette function field. It is clear that $m-2$ is never among the possible values of $i$. For $1 \leq i \leq m-3$ we know that $\cF_i$ is isomorphic to $\cF_{m-2-i}$. Note that $i$ satisfies the greatest common divisor conditions if and only if $m-2-i$ satisfies the conditions. This means that the remaining $\phi_2(m) - 1$ function fields split into pairs of isomorphic function fields, with the possible exception of the function field $\cF_{(m-2)/2}$. As previously mentioned, $(m-2)/2$ is among the possible values of $i$ precisely when $m \equiv 0 \pmod{4}$. 
    
    Each pair contains a function field $\cF_i$ with index $i$ less than or equal to $\frac{m-2}{2}$, so by Theorem \ref{thm:non_isomorphic} function fields from different pairs are non-isomorphic. For $m \not\equiv 0 \pmod{4}$ this means that we get 
    $$
        1 + \frac{\phi_2(m)-1}{2} =  \frac{\phi_2(m)+1}{2}
    $$ 
    isomorphism classes, and for $m \equiv 0 \pmod{4}$ we get
    $$
        2 + \frac{\phi_2(m) - 2}{2} = \frac{\phi_2(m) + 2}{2}
    $$
    isomorphism classes, as claimed.
\end{proof}

\section*{Acknowledgments}
This work was supported by a research grant (VIL”52303”) from Villum Fonden.

\bibliographystyle{abbrv}

\bibliography{bibmanypoints} 

\end{document}